\newcolumntype{Y}{S[table-format=5.0]}
\newcolumntype{Z}{S[table-format=6.0]}
\newcolumntype{?}{!{\vrule width 1pt}}
\newcommand{\tabstrut}{\rule{0pt}{0.5cm} \rule[-0.25cm]{0pt}{0pt}}
\theoremstyle{plain}
\newtheorem{lem}{Lemma}
\newtheorem*{lem*}{Lemma}
\newtheorem{thm}{Theorem}
\theoremstyle{remark}
\newtheorem{rem}{Remark}
\crefname{thm}{theorem}{theorems}
\newcommand{\Hdiv}{\bm{H}(\operatorname{div})}
\newcommand{\HdivOmega}{\bm{H}(\operatorname{div},\Omega)}
\newcommand{\T}{\mathcal{T}}
\newcommand{\kk}{\kappa}
\newcommand{\khat}{\widehat{\kappa}}
\newcommand{\nn}{\bm{n}}
\newcommand{\uu}{\bm{u}}
\newcommand{\vv}{\bm{v}}
\newcommand{\ff}{\bm{f}}
\newcommand{\Vh}{\bm{V}_h}
\newcommand{\VV}{\bm{V}}
\newcommand{\Vhat}{\widehat{\VV}_h}
\newcommand{\WW}{\bm{W}}
\newcommand{\Wh}{\bm{W}_h}
\newcommand{\QQ}{\mathcal{Q}}
\newcommand{\Qhat}{\widehat{Q}}
\newcommand{\II}{\mathcal{I}}
\newcommand{\llb}{\llbracket}
\newcommand{\rrb}{\rrbracket}
\newcommand{\tr}{T}
\newcommand{\Bsub}{B_{\mathrm{sub}}}
\newcommand{\Bfic}{B_{\mathrm{fic}}}
\newcommand{\Baux}{B_{\mathrm{aux}}}
\newcommand{\oo}[1]{{\otimes #1}}
\def\iii{\vert\kern-0.25ex\vert\kern-0.25ex\vert}
\DeclareMultiCiteCommand{\semicites}[\mkbibbrackets]{\cite}{\addsemicolon\space}
\newcommand*{\newreptext}[1]{%
  \begingroup % babel shorthand support
    \csname @safe@actives@true\endcsname
  \expandafter\endgroup
  \expandafter\newcommand\csname reptext@#1\endcsname
}
\newcommand*{\reptext}[1]{%
  \begingroup
  \csname @safe@actives@true\endcsname % babel shorthand support
  \@ifundefined{reptext@#1}{%
    \@latex@error{\string\reptext{#1} is undefined}\@ehc
    \endgroup
    \textbf{??}%
  }{%
    \endgroup
    \@nameuse{reptext@#1}%
  }%
}
\begin{document}

\title[Preconditioners for IPDG in $\Hdiv$]{Subspace and auxiliary space preconditioners for high-order interior penalty discretizations in $\Hdiv$}
% \author[Pazner]{Will Pazner}
\author{Will Pazner}
\address{Fariborz Maseeh Department of Mathematics and Statistics, Portland State University, Portland, OR}

\begin{abstract}
   In this paper, we construct and analyze preconditioners for the interior penalty discontinuous Galerkin discretization posed in the space $\Hdiv$.
   These discretizations are used as one component in exactly divergence-free pressure-robust discretizations for the Stokes problem.
   Three preconditioners are presently considered: a subspace correction preconditioner using vertex patches and the lowest-order $H^1$-conforming space as a coarse space, a fictitious space preconditioner using the degree-$p$ discontinuous Galerkin space, and an auxiliary space preconditioner using the degree-$(p-1)$ discontinuous Galerkin space and a block Jacobi smoother.
   On certain classes of meshes, the subspace and fictitious space preconditioners result in provably well-conditioned systems, independent of the mesh size $h$, polynomial degree $p$, and penalty parameter $\eta$.
   All three preconditioners are shown to be robust with respect to $h$ on general meshes, and numerical results indicate that the iteration counts grow only mildly with respect to $p$ in the general case.
   Numerical examples illustrate the convergence properties of the preconditioners applied to structured and unstructured meshes.
   These solvers are used to construct block-diagonal preconditioners for the Stokes problem, which result in uniform convergence when used with MINRES.
\end{abstract}

\maketitle

\section{Introduction}

This paper is concerned with the development of subspace and auxiliary space preconditioners for interior penalty discontinuous Galerkin (DG) discretizations posed in the space $\Hdiv$.
Such discretizations appear, for instance, as components in $\Hdiv$-conforming discretizations for Stokes and incompressible Navier--Stokes equations, which possess desirable properties, such as exactly divergence-free velocity fields \cite{Cockburn2006,Wang2009}.
Importantly, these methods are pressure-robust, meaning that error estimates for the velocity are decoupled from those for the pressure \cite{John2017}.
Solvers for the Stokes saddle-point problem can be constructed using block-diagonal or block-triangular preconditioners \cite{Benzi2005}.
These block preconditioners in turn require approximations to the inverse of the vector Laplacian operator.
The present paper is concerned with the development of such approximations for the interior penalty DG vector Laplacian in $\Hdiv$.

There is a large literature on the construction of preconditioners for discontinuous Galerkin discretizations, including multigrid methods (geometric multigrid and $p$-multigrid) \cite{Gopalakrishnan2003,Kanschat2004,Helenbrook2006}, algebraic multigrid (AMG) methods \cite{Helenbrook2006,Antonietti2020a}, and domain decomposition methods \cite{Antonietti2013,Antonietti2010}.
Subspace correction preconditioners using $H^1$-conforming subspaces have also been studied \cite{Antonietti2016,Pazner2020a,Dobrev2006}.
Recently, there has been significant interest in the development of matrix-free methods for DG discretizations, which are particularly important for high-order discretizations \cite{Kronbichler2019a,Bastian2019,Pazner2018e,Pazner2024}.

In the present work, we are interested in developing preconditioners for DG discretization posed in $\Hdiv$ finite element spaces.
Although the bilinear forms are the same as in the standard DG case, the use of Piola-mapped Raviart--Thomas finite elements results in some additional complexities that require special treatment.
We consider the construction of three preconditioners.
The first is a domain decomposition preconditioner using vertex patches and the lowest-order $H^1$-conforming space as a coarse space.
The use of such high-order vertex patches is traditional, and was introduced by Pavarino \cite{Pavarino1992,Pavarino1993}.
Second, we consider a fictitious space preconditioner, where a related problem is solved on a so-called fictitious space;
in our case, the fictitious space is the standard discontinuous Galerkin space, which on meshes containing only affinely transformed elements is a superset of the $\Hdiv$ space.
Finally, we consider an auxiliary space preconditioner, which combines an auxiliary space (in our case, a lower-degree discontinuous Galerkin space), with a smoother, which is obtained by solving a set of independent small problems corresponding to non-overlapping patches.

For each of these preconditioners, we analyze the conditioning of the preconditioned system.
Stronger bounds can be proven in the case of affine elements, for which the Piola transformation is spatially constant, and the Raviart--Thomas basis functions are polynomials.
In this case, it can be shown that the subspace correction preconditioner results in a system whose condition number is independent of mesh size $h$, polynomial degree $p$, and DG penalty parameter $\eta$.
Constants bounding the operators in the case of the fictitious space and auxiliary space preconditioners are also shown to be independent of discretization parameters.
On general meshes, the preconditioners are shown to be independent of mesh refinement, and the dependence on $p$ and $\eta$ is studied numerically.
Only relatively mild dependence on $p$ is observed numerically.

The structure of the paper is as follows.
In \Cref{sec:discretization}, the model problem and DG discretization are described;
some well-known eigenvalue estimates for the DG operator are recalled, and properties of the Gauss--Lobatto basis are discussed.
In \Cref{sec:preconditioning}, the three preconditioners discussed above are formulated and analyzed.
Bounds on the condition number of the preconditioned system are presented, with an focus on the cases in which $p$- and $\eta$-independent conditioning can be obtained.
\Cref{sec:technical} includes some technical results on Gauss--Lobatto interpolation that are necessary for the analysis in the preceding section.
The performance of the preconditioners is numerically studied in \Cref{sec:results}.
Several test cases are considered, including structured and unstructured meshes.
Conjugate gradient iteration counts and runtimes are presented.
Block-diagonal preconditioners for the Stokes saddle-point system are also studied.
Conclusions are presented in \Cref{sec:conclusions}.

\section{Model Problem and Discretization}
\label{sec:discretization}

Let $\Omega \subseteq \mathbb{R}^d$ ($d \in \{1,2,3\})$ denote the spatial domain.
We consider the Poisson problem for a vector-valued unknown $\uu : \Omega \to \mathbb{R}^d$,
\begin{equation}
   \label{eq:poisson}
   \left\{~
   \begin{aligned}
      -\nabla \cdot (b \nabla \uu) &= \ff && \text{in $\Omega$}, \\
      \uu &= \bm{g}_D && \text{on $\Gamma_D$}, \\
      \frac{\partial \uu}{\partial \nn} &= \bm{g}_N && \text{on $\Gamma_N$},
   \end{aligned}\right.
\end{equation}
where $b$ is a given spatially varying scalar coefficient, $\ff \in [L^2(\Omega)]^d$, $\partial\Omega = \Gamma_D \cup \Gamma_N$, and $\bm{g}_D$ and $\bm{g}_N$ are specified Dirichlet and Neumann boundary conditions, respectively.
For simplicity of exposition, we will take $b \equiv 1$, $\Gamma_D = \partial\Omega$ and $\bm{g}_D = 0$;
the extension to the more general case is standard.

This equation is discretized using an interior penalty discontinuous Galerkin (IPDG) discretization on an $\Hdiv$-conforming finite element space.
Let $\T$ denote a mesh of the domain $\Omega$, consisting of elements $\kk$, where each element is the image of the reference cube $\khat = [0,1]^d$ under a transformation mapping $T_\kk : \khat \mapsto \kk$.
In this work, we take $T_\kk$ to be a continuous, piecewise polynomial of degree at most 1 in each variable;
the elements $\kk$ are straight-sided quadrilaterals.
In the general case, $T_\kk$ is not restricted to be affine, and so the elements are not necessarily parallelepipeds, and may by skewed.
The Jacobian matrix of $T_\kk$ is denoted $J_\kk$.
In the case of affine mappings, the entries of $J_\kk$ are constant; in the general case they may be spatially varying.
Let $\QQ_{p}$ denote the space of polynomials of at most degree $p$.
Similarly, let $\QQ_{p,q}$ denote the space of bivariate polynomials of degree at most $p$ in the first variable, and degree at most $q$ in the second variable;
likewise for $\QQ_{p,q,r}$ in three variables.
On the reference element, the local (reference) Raviart--Thomas space $\VV(\khat)$ is given by
\begin{equation}
   \label{eq:reference-space}
   \VV(\khat) = \begin{cases}
      \QQ_p & (d=1)\\
      \QQ_{p,p-1} \times \QQ_{p-1,p}  & (d=2)\\
      \QQ_{p,p-1,p-1} \times \QQ_{p-1,p,p-1} \times \QQ_{p-1,p-1,p}  & (d=3)
   \end{cases}
\end{equation}
Let $\Vh$ denote the Raviart--Thomas finite element space defined on the mesh $\T$,
\[
   \Vh = \left\{
      \vv_h \in \HdivOmega : \vv_h|_\kk \circ T_\kk \in \VV(\kk)
   \right\},
\]
where the local space $\VV(\kk)$ is the image of the reference space $\VV(\khat)$ under the $\Hdiv$ Piola transformation,
\begin{equation}
   \label{eq:piola}
   \VV(\kk) = \det(J_\kk)^{-1} J_\kappa \VV(\khat).
\end{equation}
The vector-valued functions in the space $\Vh$ possess normal continuity at element interfaces, but may have discontinuous tangential components.

Consider $\phi \in L^2(\Omega)$ such that $\phi|_\kk \in H^1(\kk)$ for all $\kk  \in \T$.
Define the element-wise gradient $\nabla_h \phi \in [L^2(\Omega)]^d$ such that $(\nabla_h \phi)|_\kk = \nabla (\phi|_\kk$).
Let $\Gamma$ denote the mesh skeleton, i.e.~$\Gamma = \bigcup_{\kk \in \T} \partial \kk$.
For a given face $e \subset \Gamma$ bordering two elements $\kk^+$ and $\kk^-$, let $\phi^+$ denote the trace of $\phi$ on $e$ from within $\kk^+$, and likewise, let $\phi^-$ denote the trace of $\phi$ from within $\kk^-$.
Similarly, let $\bm{n}^+$ denote the vector normal to $e$ pointing outwards from $\kk^+$, and let $\bm{n}^-$ denote the normal vector pointing outwards from $\kk^-$ (i.e.~$\bm{n}^- = - \bm{n}^+$).
On such a face $e$, the average $\{ \phi \}$ and jump $\llb \phi \rrb$ are defined by
\[
   \{ \phi \} = \frac{1}{2}\left( \phi^+ + \phi^- \right), \qquad
   \llb \phi \rrb = \phi^+ \bm n^+ + \phi^- \bm n^-.
\]
Note that the jump of a scalar is a vector;
extending this definition to $\vv_h \in V_h$, the jump of a vector-valued function is the matrix
\[
   \llb \bm\sigma \rrb = \bm \sigma^+ \otimes \bm n^+ + \bm \sigma^- \otimes \bm n^-,
\]
where $\otimes$ denotes the outer product.

We discretize \eqref{eq:poisson} by looking for $\uu_h \in \Vh$ such that
\begin{equation}
   \label{eq:variational-problem}
   a(\uu_h, \vv_h) = (\ff, \vv_h) \qquad \text{for all $\vv_h \in \Vh$},
\end{equation}
where $(\cdot\,,\cdot)$ denotes the $L^2$ inner product over $\Omega$, and $a(\cdot\,,\cdot)$ is the interior penalty bilinear form,
\begin{equation}
   \label{eq:ipdg}
   a(\uu_h, \vv_h) =
      (\nabla_h \uu_h, \nabla_h \vv_h)
      - \langle \llb \uu_h \rrb, \{ \nabla_h \vv_h \} \rangle
      - \langle \{ \nabla_h \uu_h \}, \llb \vv_h \rrb \rangle
      + \langle \alpha \llb \uu_h \rrb, \llb \vv_h \rrb  \rangle.
\end{equation}
In the above, $\langle \cdot, \,\cdot \rangle$ represents the $L^2$ inner product over the mesh skeleton $\Gamma$, and $\alpha = \eta p^2 / h$, where the penalty parameter $\eta$ is a constant (independent of $h$ and $p$) that must be chosen sufficiently large to ensure stability of the method.

The discretization \eqref{eq:ipdg} is used in $\Hdiv$-conforming DG discretizations of Stokes \cite{Cockburn2006,Kanschat2015,Wang2009} and incompressible Navier--Stokes equations \cite{Fu2019}.
These methods have the attractive property that the resulting velocity field is pointwise divergence-free, which in turn results in favorable properties, such as amenability to monolithic multigrid preconditioning \cite{Kanschat2015,Cui2024}, and pressure-robust error estimates.

We emphasize that while the bilinear form \eqref{eq:ipdg} is the standard interior penalty DG (IPDG) discretization (see e.g.~\cite{Arnold1982,Arnold2002}), the method defined by \eqref{eq:variational-problem} is distinct from the IPDG method defined on the standard vector-valued DG finite element space: the problem \eqref{eq:variational-problem} is posed on the $\Hdiv$ finite element space $\Vh$, which then has important implications for preconditioning, as discussed in the following section.

\begin{rem}[Affine and non-affine elements]
The case when the mesh transformation $T_{\kk}$ is affine presents some simplifications.
In this case, the element $\kk$ is a parallelepiped, and the transformation Jacobian matrix and determinant are both constant.
Then, the Piola transformation \eqref{eq:piola} is constant, and the elements of the local space $\VV(\kk)$ are polynomials.
However, when the mesh transformation is not affine (for example, on skewed quadrilaterals; more generally, on curved elements), then the entries of the Jacobian matrix and the determinant are spatially varying (they are polynomials in the case of isoparametric elements), and the elements of the local space $\VV(\kk)$ are not generally polynomials.
The Raviart--Thomas finite element space on meshes with non-affine case can lead to some well-studied difficulties, including degraded convergence rates; see for example \cite{Arnold2005}.
\end{rem}

\begin{rem}[Notation]
   In what follows, we will use the notation $a \lesssim b$ to express that $a \leq C b$, where $C$ is a constant independent of the discretization parameters $h$, $p$, and $\eta$.
   Similarly, $a \gtrsim b$ is used to mean $b \lesssim a$, and $a \approx b$ means that both $a \lesssim b$ and $b \lesssim a$.
   Dependence of generic constants $C$ on discretization parameters $h$, $p$, or $\eta$ will be mentioned explicitly.
   The standard norms and seminorms on the Sobolev space $H^k(D)$ will be denoted $\|\cdot\|_{k,D}$ and $|\cdot|_{k,D}$, respectively;
   when the subscript $D$ is omitted, the domain of integration will be assumed to be all of $\Omega$.
\end{rem}

\subsection{Mesh-dependent DG norms}

In the analysis of \eqref{eq:ipdg}, it will be convenient to introduce the mesh-dependent DG norm $\iii \cdot \iii$ defined by
\begin{equation}
   \label{eq:dg-norm}
   \iii \bm v \iii^2
      = \| \nabla_h \bm v \|_0^2 + \| \alpha^{1/2} \llb \bm v \rrb \|_{0,\Gamma}^2.
\end{equation}
The IPDG bilinear form satisfies the following continuity and coercivity bounds.

\begin{lem}[{\cite[Lemma 2.4]{Antonietti2010}}]
   \label{lem:norm-equiv}
   For all $\bm u_h, \bm v_h \in \Vh$, it holds that
   \begin{gather}
      a(\bm u_h, \bm v_h) \lesssim \iii \bm u_h \iii \, \iii \bm v_h \iii, \\
      a(\bm u_h, \bm u_h) \gtrsim \iii \bm u_h \iii^2.
   \end{gather}
\end{lem}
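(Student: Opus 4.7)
The plan is to reduce both bounds to three standard ingredients: Cauchy--Schwarz, Young's inequality, and a discrete trace inequality on each element. The key trace estimate to establish first is
\[
   \| \nabla \vv_h \|_{0,\partial\kk}^2 \lesssim (p^2/h) \| \nabla \vv_h \|_{0,\kk}^2
   \qquad \text{for all $\vv_h \in \Vh$ and $\kk \in \T$}.
\]
Combined with $\alpha = \eta p^2 / h$, this gives the workhorse inequality
\[
   \| \alpha^{-1/2} \{ \nabla_h \vv_h \} \|_{0,\Gamma}^2
      \lesssim \eta^{-1} \| \nabla_h \vv_h \|_0^2,
\]
which is used for both the continuity and the coercivity estimates.

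For the continuity bound I would expand $a(\uu_h,\vv_h)$ term by term. The volume contribution $(\nabla_h \uu_h, \nabla_h \vv_h)$ and the jump--jump term $\langle \alpha \llb \uu_h \rrb, \llb \vv_h \rrb\rangle$ are directly bounded by $\iii \uu_h \iii \, \iii \vv_h \iii$ via Cauchy--Schwarz. For each consistency-type term, say $\langle \{ \nabla_h \uu_h \}, \llb \vv_h \rrb \rangle$, I insert the weight $\alpha^{1/2}$ and apply Cauchy--Schwarz to obtain
\[
   \langle \{ \nabla_h \uu_h \}, \llb \vv_h \rrb \rangle
      \le \| \alpha^{-1/2} \{ \nabla_h \uu_h \} \|_{0,\Gamma}
          \| \alpha^{1/2} \llb \vv_h \rrb \|_{0,\Gamma},
\]
then use the workhorse inequality to control the first factor by $\|\nabla_h \uu_h\|_0$.

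For coercivity I would expand $a(\uu_h,\uu_h) = \|\nabla_h \uu_h\|_0^2 - 2\langle \{\nabla_h \uu_h\}, \llb \uu_h \rrb\rangle + \|\alpha^{1/2} \llb \uu_h \rrb\|_{0,\Gamma}^2$ and apply Young's inequality $2ab \le \epsilon a^2 + \epsilon^{-1} b^2$ with $a = \|\alpha^{-1/2}\{\nabla_h \uu_h\}\|_{0,\Gamma}$ and $b = \|\alpha^{1/2}\llb\uu_h\rrb\|_{0,\Gamma}$. Using the workhorse estimate on $a^2$, the resulting lower bound takes the form
\[
   a(\uu_h,\uu_h) \ge (1 - C\epsilon / \eta) \| \nabla_h \uu_h \|_0^2
      + (1 - \epsilon^{-1}) \| \alpha^{1/2} \llb \uu_h \rrb \|_{0,\Gamma}^2,
\]
with $C$ the constant from the trace inequality. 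Choosing $\epsilon$ slightly greater than $1$ (say $\epsilon = 2$) and then requiring $\eta > \eta_0 := 2C$ makes both coefficients strictly positive and bounded away from zero uniformly in $h$, $p$, and $\eta$, which is exactly what the $\gtrsim$ notation asserts.

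The main obstacle is the trace inequality itself, since elements of $\Vh$ are Piola-transforms of reference polynomials and, on non-affine quadrilaterals, they are not polynomials on the physical element. On affinely mapped meshes one reduces to the classical polynomial trace inequality on $\khat$ directly; in the bilinear-mapped case one must track the Jacobian $J_\kk$ and $\det(J_\kk)$ through the Piola transformation \eqref{eq:piola}, but under standard shape-regularity the $p^2/h$ scaling survives. The cited source \cite[Lemma 2.4]{Antonietti2010} carries out exactly this argument in the setting needed here.
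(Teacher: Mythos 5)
Your argument is correct and is exactly the standard one: the paper does not prove this lemma itself but cites \cite[Lemma~2.4]{Antonietti2010}, whose proof is precisely the combination of an elementwise inverse trace inequality for $\nabla_h \vv_h$, Cauchy--Schwarz with the weight $\alpha^{\pm 1/2}$, and Young's inequality, with the penalty $\eta$ taken large enough to absorb the trace constant. The only cosmetic imprecision is at the end: with $\epsilon = 2$ the choice $\eta > 2C$ gives positivity but not a coefficient bounded away from zero uniformly as $\eta \downarrow 2C$; you should fix $\eta_0$ strictly above the Young threshold (e.g.\ $\eta \geq 4C$, giving both coefficients at least $1/2$), which is consistent with the paper's standing assumption that $\eta$ is chosen sufficiently large.
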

Furthermore, the following useful $L^2$-norm estimates hold.
\begin{lem}[{\cite[Lemma 2.6]{Antonietti2010} and \cite[Lemma 3]{Pazner2021b}}]
   \label{lem:eigval-estimates}
   For all $\bm u_h \in \Vh$, it holds that
   \[
      \| \bm u_h \|_0^2 \lesssim a(\bm u_h, \bm u_h) \lesssim \eta \frac{p^4}{h^2} \| \bm u_h \|_0^2,
   \]
   and
   \[
      a(\bm u_h, \bm u_h) \lesssim \frac{p^4}{h^2} \| \bm u_h \|_0^2 + \eta \frac{p^2}{h} \| \llb \bm u_h \rrb \|_{0,\Gamma}^2.
   \]
\end{lem}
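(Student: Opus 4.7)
The plan is to reduce everything to the triple-norm $\iii \cdot \iii$ via \Cref{lem:norm-equiv}, and then pair a broken Poincaré inequality with standard $hp$-inverse estimates, adapted to Piola-mapped Raviart--Thomas elements. Concretely, the equivalence $a(\bm u_h,\bm u_h) \approx \iii \bm u_h \iii^2 = \|\nabla_h \bm u_h\|_0^2 + \|\alpha^{1/2}\llb\bm u_h\rrb\|_{0,\Gamma}^2$ turns each of the three inequalities in the statement into a purely norm-level assertion.

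For the lower bound $\|\bm u_h\|_0^2 \lesssim a(\bm u_h,\bm u_h)$, I would invoke a broken Poincaré--Friedrichs inequality of the form
\[
   \|\bm v\|_0^2 \lesssim \|\nabla_h \bm v\|_0^2 + \tfrac{1}{h}\|\llb \bm v \rrb\|_{0,\Gamma}^2,
\]
valid for piecewise $H^1$ vector fields with vanishing Dirichlet trace on $\partial\Omega$ (the usual Brenner-type result, whose proof is insensitive to whether tangential continuity is enforced, so it applies to $\Vh$). Since $\alpha = \eta p^2/h \geq c/h$ for any admissible $\eta \gtrsim 1$, the jump term in $\iii\bm u_h\iii^2$ dominates $h^{-1}\|\llb\bm u_h\rrb\|_{0,\Gamma}^2$, and the coercivity half of \Cref{lem:norm-equiv} closes the estimate.

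For the two upper bounds, the main tools are the $hp$-inverse inequalities
\[
   \|\nabla_h \bm u_h\|_0^2 \lesssim \tfrac{p^4}{h^2}\|\bm u_h\|_0^2,
   \qquad
   \|\bm u_h\|_{0,\partial\kk}^2 \lesssim \tfrac{p^2}{h}\|\bm u_h\|_{0,\kk}^2.
\]
Both are classical on a reference element for polynomial spaces and transfer to $\Vh$ via a standard scaling argument; the only delicate point is that, on non-affine $\kk$, the pushforward \eqref{eq:piola} is not polynomial, so one must apply the reference-element inverse estimates to the pulled-back field and control the Jacobian ratios $\det(J_\kk), \|J_\kk\|, \|J_\kk^{-1}\|$ elementwise---these are uniformly bounded in $h$ and $p$ under our shape-regularity assumption on $T_\kk$, so the inverse constants remain $p$-explicit but $h$- and mesh-independent. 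Combining the two inverse bounds with the definition of $\alpha$ yields
\[
   \iii \bm u_h \iii^2
   \lesssim \tfrac{p^4}{h^2}\|\bm u_h\|_0^2 + \tfrac{\eta p^2}{h}\|\llb\bm u_h\rrb\|_{0,\Gamma}^2,
\]
which is the second claimed bound, and bounding $\|\llb\bm u_h\rrb\|_{0,\Gamma}^2 \lesssim p^2/h\,\|\bm u_h\|_0^2$ via the trace inverse inequality absorbs the jump term into the volumetric one and gives the first.

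The main obstacle, and the reason this lemma is not entirely boilerplate, is verifying that the reference-to-physical inverse estimates really are uniform in $p$ for the Piola-mapped Raviart--Thomas space on curved/skewed quadrilaterals: unlike nodal DG, the basis functions are rational, and one must argue at the level of pulled-back polynomials before restoring the Piola map, taking care that the extra $\det(J_\kk)^{-1}$ factor does not introduce $p$-dependence. Once this scaling bookkeeping is set up, every remaining step is a direct application of well-known $hp$ estimates.
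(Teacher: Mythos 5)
Your proof is correct. Note that the paper itself gives no proof of this lemma --- it is quoted directly from the cited references (Antonietti et al.\ and Pazner), so there is no in-paper argument to compare against; your reconstruction follows the standard route used in those sources. The two upper bounds via the $hp$-inverse and trace-inverse inequalities (the latter is exactly \Cref{lem:trace} of the paper, which already handles the Piola-map bookkeeping you flag as the delicate point) combined with the continuity half of \Cref{lem:norm-equiv} are exactly right, as is deducing the first upper bound from the second by absorbing the jump term. For the lower bound, the broken Poincar\'e--Friedrichs inequality applies componentwise, and since the Frobenius norm of $\llb \bm u_h \rrb = \bm u^+ \otimes \bm n^+ + \bm u^- \otimes \bm n^-$ equals the Euclidean norm of $\bm u^+ - \bm u^-$, the tensor-valued jump controls the scalar jumps as needed; one small imprecision is your phrase ``vanishing Dirichlet trace on $\partial\Omega$'' --- functions in $\Vh$ do not vanish on the boundary, but the skeleton $\Gamma$ includes boundary faces, so the boundary penalty terms supply the control that the broken Poincar\'e inequality requires. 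With that caveat the argument is complete.
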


\subsection{Gauss--Lobatto nodal basis and degrees of freedom}
\label{sec:lobatto}

In this work, we consider a nodal basis for the reference space $\VV(\khat)$ (cf.~\eqref{eq:reference-space}) defined using the tensor-product of one-dimensional Lagrange interpolating polynomials defined at Gauss--Lobatto points.
For exposition, suppose $d = 2$, and so $\VV(\khat) = \QQ_{p,p-1} \times \QQ_{p-1,p}$.
The basis for $\QQ_{p,p-1}$ is given as follows.
Let $\{ \xi_i \}_0^p$ denote the set of $(p+1)$ Gauss--Lobatto points, and let $\{ \hat{\xi}_i \}_0^{p-1}$ denote the set of $p$ Gauss--Lobatto points.
For $0 \leq i \leq p$ and $0 \leq j \leq p-1$, let $\varphi_{ij}$ be the unique element of $\QQ_{p,p-1}$ satisfying
\[
   \varphi_{i} (\xi_\ell, \hat{\xi}_m) = \begin{cases}
      1, &\quad\text{if $i=\ell$, $j=m$}, \\
      0, &\quad\text{otherwise}.
   \end{cases}
\]
We can also write $\varphi_{ij}$ in tensor-product form,
\[
   \varphi_{ij} (\xi_\ell, \hat{\xi}_m)
   = \varphi_i(\xi_\ell) \hat{\varphi}_j(\hat{\xi}_m),
\]
where $\{ \varphi_i \}_{i=0}^p$ are the one-dimensional Lagrange interpolation polynomials associated with $\xi_i$, and $\{ \hat{\varphi}_i \}_{i=0}^{p-1}$ are those associated with $\hat{\xi}_i$.
An analogous construction is used to define the basis $\{ \psi_{ij} \}$ for $\QQ_{p-1,p}$.
The basis for $\VV(\khat)$ is formed by taking $2p(p+1)$ coordinate-aligned vector-valued multiples of these basis functions,
\[
   \{ \bm\varphi_{ij} : \bm\varphi_{ij} = (\varphi_{ij},0)^\tr \} \cup \{ \bm\psi_{ij} : \bm\psi_{ij} = (0,\psi_{ij})^\tr \}.
\]
The local basis for $\VV(\kk)$ for each $\kk \in \T$ is given using the Piola transformation \eqref{eq:piola}, and the basis for the global space $\Vh$ is obtained by the usual identification of shared degrees of freedom, ensuring continuity of the normal components.

This basis induces a dual set of degrees of freedom.
Let $\bm\theta_i$ denote the $i$th global basis function (corresponding to one of the $\bm\varphi_{jk}$ or $\bm\psi_{jk}$ local basis functions).
Then, $\bm x_i$ denotes the corresponding (physical) nodal point, and $\hat{\bm n}_i$ denotes the associated coordinate direction (either $(1,0)^\tr$ or $(0,1)^\tr$ depending on whether $\bm\theta_i$ corresponds to $\bm\varphi_{jk}$ or $\bm\psi_{jk}$, respectively).
Then, the corresponding degree of freedom is given by
\begin{equation}
   \label{eq:dof}
   \alpha_i(\bm u) = \hat{\bm n}_i^\tr \det(J(\bm x_i)) J^{-1}(\bm x_i) \bm u(\bm x_i).
\end{equation}
The associated nodal interpolation operator $\II_{\Vh}$ corresponds to pointwise interpolation at the nodal points and directions.

Consider an element $\kk \in \T$.
We will call a degree of freedom \textit{interior} if the associated Gauss--Lobatto node lies in the interior of $\kk$.
Let $\VV_0(\kk)$ denote the subset of $\VV(\kk)$ containing those functions for which all interior degrees of freedom vanish.
Then, the following trace and inverse trace inequalities hold.

\begin{lem}
   \label{lem:trace}
   For all $\bm v \in \VV(\kk)$,
   \[
      \| u \|_{0,\partial \kk}^2 \lesssim \frac{p^2}{h} \| u \|_{0,\kk}^2,
   \]
   and, for all $\bm w \in \VV_0(\kk)$,
   \[
      \| w \|_{0,\kk}^2 \lesssim \frac{h}{p^2} \| w \|_{0,\partial \kk}^2.
   \]
\end{lem}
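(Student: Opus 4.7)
The plan is to reduce both inequalities to the reference element $\khat$ via the Piola transformation, apply standard polynomial inequalities there, and push the bounds back to $\kk$ using scaling relations that follow from shape regularity.

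\textbf{Step 1 (Piola pull-back and scaling).} For $\bm v \in \VV(\kk)$, write $\bm v \circ T_\kk = \det(J_\kk)^{-1} J_\kk \hat{\bm v}$ with $\hat{\bm v} \in \VV(\khat)$. Change of variables and shape regularity (which gives $\|J_\kk\| \lesssim h$, $\|J_\kk^{-1}\| \lesssim h^{-1}$, $|\det J_\kk| \approx h^d$ pointwise on $\kk$) yield the two-sided scalings
\[
\|\bm v\|_{0,\kk}^2 \approx h^{2-d}\|\hat{\bm v}\|_{0,\khat}^2, \qquad \|\bm v\|_{0,\partial\kk}^2 \approx h^{1-d}\|\hat{\bm v}\|_{0,\partial\khat}^2,
\]
so that the factor of $h^{-1}$ appearing in both target inequalities arises purely from this transformation. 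Note that $T_\kk$ being at most degree~1 in each variable means $J_\kk$ is bounded and shape-regular, so the constants in $\approx$ are independent of $h$ and $p$.

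\textbf{Step 2 (Reference trace).} Each component of $\hat{\bm v}$ is a polynomial of degree at most $p$ in each variable on $\khat = [0,1]^d$. The classical Markov-type polynomial trace inequality (a 1D estimate $|\hat q(0)|^2 + |\hat q(1)|^2 \lesssim p^2 \|\hat q\|_{L^2(0,1)}^2$ tensorized over coordinates) then gives
\[
\|\hat{\bm v}\|_{0,\partial\khat}^2 \lesssim p^2 \|\hat{\bm v}\|_{0,\khat}^2.
\]
Combined with Step~1, this proves the first inequality.

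\textbf{Step 3 (Reference inverse trace).} The definition \eqref{eq:dof} of the degrees of freedom is specifically designed so that $\alpha_i(\bm w) = \hat{\bm n}_i^\tr \hat{\bm w}(\hat{\bm x}_i)$: the Piola factors exactly cancel. Hence $\bm w \in \VV_0(\kk)$ if and only if every coefficient of $\hat{\bm w}$, expressed in the tensor-product Gauss--Lobatto Lagrange basis described in Section~\ref{sec:lobatto}, that corresponds to an \emph{interior} GL node of $\khat$ vanishes. Using the standard fact that the one-dimensional GL mass matrix is spectrally equivalent to its diagonal of quadrature weights $w_i$, uniformly in $p$, with endpoint weights satisfying $w_0,w_p \approx p^{-2}$, one obtains by tensorization
\[
\|\hat{\bm w}\|_{0,\khat}^2 \approx \sum_i c_i^2 \prod_{k=1}^d w_{i_k}^{(k)}.
\]
Since every surviving coefficient $c_i$ corresponds to a node lying on $\partial\khat$, at least one factor in each product is an endpoint weight of size $\approx p^{-2}$. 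The boundary norm $\|\hat{\bm w}\|_{0,\partial\khat}^2$ collects precisely the sums $\sum c_i^2 \prod_{k\ne k_0} w_{i_k}^{(k)}$ over the various faces (via the same 1D mass-matrix equivalence applied on the face), so factoring out the $p^{-2}$ gives
\[
\|\hat{\bm w}\|_{0,\khat}^2 \lesssim p^{-2}\|\hat{\bm w}\|_{0,\partial\khat}^2,
\]
and Step~1 delivers the second inequality.

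\textbf{Main obstacle.} The direct trace is routine; the delicate part is Step~3. Two ingredients must be verified carefully: (a) the uniform-in-$p$ spectral equivalence of the GL mass matrix with its diagonal, and the precise $p^{-2}$ behavior of the endpoint weights; and (b) that the nodal characterization of $\VV_0$ transfers from $\kk$ to $\khat$, which is exactly the role played by the $\det(J)J^{-1}$ factor in \eqref{eq:dof}. Once these are in hand, the bookkeeping over the various faces of $\partial\khat$ and tensor-product indices is mechanical.
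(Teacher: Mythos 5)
Your proof is correct and follows essentially the same route as the paper: both reduce the estimates to the polynomial preimage $\hat{\bm v}$ under the Piola map (using shape regularity to control the $\det(J)^{-1}J$ factors) and then invoke trace and inverse-trace inequalities tied to the Gauss--Lobatto nodes, using the fact that the degree-of-freedom functionals \eqref{eq:dof} cancel the Piola factors so that membership in $\VV_0(\kk)$ is equivalent to the nodal vanishing condition on $\khat$. The only difference is that the paper imports the reference-element inequalities from \cite[Lemma 3.1]{Burman2007}, whereas you re-derive them from the Markov-type endpoint bound and the uniform-in-$p$ spectral equivalence of the Gauss--Lobatto mass matrix with its diagonal of quadrature weights (with endpoint weights of size $p^{-2}$) --- which is precisely the content of the cited lemma's proof.
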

\begin{proof}
   From \cite[Lemma 3.1]{Burman2007}, we recall the corresponding trace and inverse trace inequalities for $\QQ_p(\kk)$:
   for $u \in \QQ_p(\kk)$, it holds that
   \[
      \| u \|_{0,\partial \kk}^2 \lesssim \frac{p^2}{h} \| u \|_{0,\kk}^2,
   \]
   and, for $u \in \QQ_p^0(\kk)$, where $\QQ_p^0$ indicates the set of polynomials that vanish at the interior Gauss--Lobatto nodes, it holds that
   \[
      \| u \|_{0,\kk}^2 \lesssim \frac{h}{p^2} \| u \|_{0,\partial \kk}^2.
   \]
   The proof of \cite[Lemma 3.1]{Burman2007} extends immediately to functions in $\QQ_{p,p-1}$ and $\QQ_{p-1,p}$ and to vector-valued functions $\bm u \in \QQ_{p,p-1} \times \QQ_{p-1,p}$.

   Any $\bm v \in \VV(\kk)$ is given by $\bm v = \det(J)^{-1} J \hat{\bm v}$ where $\hat{\bm v} \in \QQ_{p,p-1} \times \QQ_{p-1,p}$, and so pointwise $\| \bm v(\bm x) \|_{\ell^2} \approx \| \hat{\bm v}(\bm x) \|_{\ell^2}$, up to constants depending on maximum and minimum values of $\det(J)$ and the $\ell^2$ condition number of the Jacobian matrix $J$, from which the trace inequality follows.
   Similarly, if $\bm v \in \VV_0(\kk)$, then the associated $\hat{\bm v}$ is an element of $\QQ_{p,p-1}^0 \times \QQ_{p-1,p}^0$, and so the inverse trace inequality holds.
\end{proof}

We now introduce the Oswald averaging operator $Q_h : \Vh \to \Vh$ (cf.~\cite{Oswald1993} and e.g.~\cite{Burman2007}).
For any $\bm x \in \Omega$, let $\mathcal{E}(\bm x)$ denote the set of elements $\kk \in \T$ containing $\bm x$.
Then, given $\bm v_h$, define its average value $\overline{\bm v_h}$ by
\[
   \overline{\bm v_h}(\bm x) = \frac{1}{\# \mathcal{E}(\bm x)}
      \sum_{\kk \in \mathcal{E}(\bm x)} \bm v_h|_{\kk}(\bm x).
\]
The operator $Q_h$ is defined by
\begin{equation}
   \label{eq:oswald}
   Q_h(\bm v_h) = \II_{\Vh} ( \overline{\bm v_h}).
\end{equation}
We remark that if all mesh elements $\kk$ are affine, then $Q_h(\bm v_h) \in [H^1(\Omega)]^d$.
This does not generally hold in the case of non-affine elements.
Note that by definition of $Q_h$, for $\bm w_h = \bm v_h - Q_h(\bm v_h)$, all interior degrees of freedom of $\bm w_h$ vanish.
This leads to the following estimate of $\bm w_h$ in terms of the jumps of $\bm v_h$.

\begin{lem}
   \label{lem:oswald}
   For all $\bm v_h \in \Vh$, it holds that
   \[
      \| \bm v_h - Q_h(\bm v_h) \|_0^2 \lesssim \frac{h}{p^2} \| \llb \bm v_h \rrb \|_{0,\Gamma}^2.
   \]
\end{lem}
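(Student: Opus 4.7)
The plan is to reduce the estimate to the inverse trace inequality of \Cref{lem:trace}, using the key observation that $Q_h$ annihilates interior nodal degrees of freedom. Set $\bm w_h := \bm v_h - Q_h(\bm v_h)$. By the definition of $Q_h$ in \eqref{eq:oswald}, for each $\kk \in \T$ the function $\bm w_h|_\kk$ vanishes at every nodal point lying in the interior of $\kk$, so $\bm w_h|_\kk \in \VV_0(\kk)$. Applying \Cref{lem:trace} and summing over $\kk$ yields
\[
   \| \bm w_h \|_0^2 \lesssim \frac{h}{p^2} \sum_{\kk \in \T} \| \bm w_h \|_{0,\partial \kk}^2,
\]
so it suffices to prove $\sum_\kk \| \bm w_h \|_{0,\partial \kk}^2 \lesssim \| \llb \bm v_h \rrb \|_{0,\Gamma}^2$.

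To obtain this face-level bound, I would fix an interior face $e \subset \partial \kk^+ \cap \partial \kk^-$ and examine $\bm w_h$ at each nodal point $\bm x_i \in e$. Those degrees of freedom $\alpha_i$ defined in \eqref{eq:dof} whose reference direction $\hat{\bm n}_i$ is normal to $e$ correspond, via the Piola transform \eqref{eq:piola}, to the physical normal component of $\bm v_h$, which is single-valued across $e$ by $\Hdiv$-conformity; averaging by $Q_h$ leaves these unchanged. The tangential degrees of freedom may differ between the two sides, and $Q_h$ replaces each by the arithmetic mean, so that $\bm w_h|_{\kk^\pm}(\bm x_i) = \pm \tfrac{1}{2}(\bm v_h|_{\kk^+} - \bm v_h|_{\kk^-})(\bm x_i)$ on the tangential subspace, and in particular $|\bm w_h|_{\kk^\pm}(\bm x_i)| \lesssim |\llb \bm v_h \rrb(\bm x_i)|$. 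Invoking the $\ell^2$--$L^2$ equivalence at Gauss--Lobatto nodes for polynomial traces on $e$, together with the uniform bounds on $\det(J_\kk)$ and on the conditioning of $J_\kk$ already used in the proof of \Cref{lem:trace}, one obtains $\| \bm w_h \|_{0,e}^2 \lesssim \| \llb \bm v_h \rrb \|_{0,e}^2$. Summing over all faces of all elements and combining with the previous display gives the result; boundary faces are handled identically, since $Q_h$ averages across a single-element patch and the relevant degrees of freedom on $\partial\Omega$ coincide with those from the prescribed trace.

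The main obstacle lies in the face-level step: converting the nodal identity $\bm w_h = \tfrac{1}{2}\llb \bm v_h \rrb$ at the Gauss--Lobatto points on $e$ into an $L^2(e)$ inequality with constants independent of $h$ and $p$. This requires the fact that, on the face polynomial space traced out by $\VV(\kk)$, the discrete norm induced by Gauss--Lobatto quadrature is $p$-uniformly equivalent to $\|\cdot\|_{0,e}$, and that passing through the Piola transform only introduces constants controlled by $\det(J_\kk)$ and the Jacobian condition number; these ingredients are precisely the ones already extracted in the reduction to the reference polynomial space at the end of the proof of \Cref{lem:trace}. Once they are in hand, the remainder of the argument is bookkeeping.
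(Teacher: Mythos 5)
Your argument is correct and is essentially the paper's own proof: the paper disposes of this lemma by citing \cite[Lemma 3.2]{Burman2007} together with \Cref{lem:trace}, and what you have written out---$\bm w_h = \bm v_h - Q_h(\bm v_h)$ vanishes at interior nodes so the inverse trace inequality applies elementwise, and the face nodal values of $\bm w_h$ are controlled by the jump via Gauss--Lobatto $\ell^2$--$L^2$ equivalence and the Piola/Jacobian bounds---is precisely the content of that citation. The one detail still to supply is at nodes shared by more than two elements (mesh vertices), where your identity $\bm w_h|_{\kk^\pm}(\bm x_i) = \pm\tfrac{1}{2}\llb \bm v_h \rrb(\bm x_i)$ must be replaced by the standard telescoping estimate of $\bm v_h|_{\kk} - \overline{\bm v_h}$ by a sum of jumps over the faces of the vertex patch; this is routine and does not affect the constants.
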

\begin{proof}
   The proof follows that of \cite[Lemma 3.2]{Burman2007}, making use of the result of \Cref{lem:trace}.
\end{proof}

A key property about interpolation at Gauss--Lobatto points is that it is stability in the $H^1$ norm.
This result was proven in \cite{Pavarino1992,Pavarino1993}; a shorter proof with explicit bounds on the constants is provided in \Cref{sec:technical}.
We state this result presently.

\begin{lem}[{Cf.~\cite[Lemmas 2 and 4]{Pavarino1993}}]
   \label{lem:h1-interp-stability}
   Consider the space $\QQ_p^d$ (resp.~$\QQ_{p-1}^d$) of $d$-variate polynomials of degree at most $p$ (resp.~$p-1$) in each variable.
   Let $\II_{p-1} : \QQ_p \to \QQ_{p-1}$ denote the interpolation operator at the Gauss--Lobatto points, and let $\II_{p-1}^\oo{d} : \QQ_p^d \to \QQ_{p-1}^d$ denote its $d$-dimensional tensor product,
   \[
      \II_{p-1}^\oo{d} = \II_{p-1} \otimes \II_{p-1} \otimes \cdots \otimes \II_{p-1}.
   \]
   Then, for all $u \in \QQ_p^d$, it holds that
   \begin{align*}
      \| \II_{p-1}^\oo{d} u \|_0^2 &\leq \left( 2 + \frac{4}{2p-3} \right)^d \| u \|_0^2, \\
      | \II_{p-1}^\oo{d} u |_1^2 &\leq \left( 2 + \frac{4}{2p-3} \right)^{d-1} | u |_1^2.
   \end{align*}
\end{lem}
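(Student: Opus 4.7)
The plan is to reduce both inequalities to their one-dimensional counterparts and then propagate them to $d$ dimensions by a tensor-product argument. The fact that the $L^2$ bound has exponent $d$ while the $H^1$-seminorm bound has exponent only $d-1$ strongly suggests that, in one dimension, the $L^2$ bound carries the full factor $2+4/(2p-3)$, while the $H^1$-seminorm bound holds with constant equal to $1$.

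For the one-dimensional $H^1$ bound, I write $u \in \QQ_p$ as $u = v + r$ with $v := \II_{p-1} u \in \QQ_{p-1}$ and $r := u - v \in \QQ_p$ vanishing at all $p$ Gauss--Lobatto nodes, including the endpoints. Expanding $|u|_1^2 = |v|_1^2 + 2(v',r') + |r|_1^2$, the cross term vanishes by integration by parts: the boundary contribution drops because $r$ vanishes at the endpoints, and the remaining integral $(v'',r)$ has integrand of degree at most $(p-3)+p = 2p-3$, which is exactly the exactness range of $p$-point Gauss--Lobatto quadrature, so the integral equals $\sum_i w_i v''(\xi_i) r(\xi_i) = 0$. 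This yields the Pythagorean identity $|u|_1^2 = |v|_1^2 + |r|_1^2$, hence $|\II_{p-1} u|_1 \leq |u|_1$.

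For the one-dimensional $L^2$ bound, I would expand in Legendre polynomials and exploit the standard identities $\|L_{p-1}\|_0^2 = 2/(2p-1)$ and $\sum_i w_i L_{p-1}(\xi_i)^2 = 2/(p-1)$, together with $(2p-3)$-exactness. For any $v \in \QQ_{p-1}$ the quadrature overestimates $\|v\|_0^2$ only through the $L_{p-1}^2$ contribution, so $\|v\|_0^2 \leq \sum_i w_i v(\xi_i)^2$; taking $v = \II_{p-1} u$ and using that interpolation matches values at the nodes, I get $\|\II_{p-1} u\|_0^2 \leq \sum_i w_i u(\xi_i)^2$. It then remains to bound this discrete norm by $(2+4/(2p-3))\|u\|_0^2$ for $u \in \QQ_p$; the needed discrete inner products $\sum_i w_i L_k(\xi_i) L_j(\xi_i)$ in the non-exact range $k+j \geq 2p-2$ can be computed from Bonnet's recursion and Legendre's differential equation evaluated at the nodes (where $L'_{p-1}$ vanishes). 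The constant $2+4/(2p-3)$ should appear as the operator norm of the resulting quadratic form on the top three coefficients $(a_{p-2},a_{p-1},a_p)$; this quadrature-error computation is the main technical obstacle.

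Finally, the $d$-dimensional bounds follow by tensorization. Writing $\II_{p-1}^{\otimes d}$ as the composition $\II_{p-1}^{(1)} \circ \cdots \circ \II_{p-1}^{(d)}$, where $\II_{p-1}^{(k)}$ interpolates in the $k$-th variable only, the $L^2$ bound is obtained by applying the 1D bound in each direction in turn (via Fubini on the remaining variables), producing one factor of $2+4/(2p-3)$ per direction. For the $H^1$ seminorm, I write $|\II_{p-1}^{\otimes d} u|_1^2 = \sum_{k=1}^d \|\partial_k \II_{p-1}^{\otimes d} u\|_0^2$; for each $k$, the operators $\II_{p-1}^{(j)}$ with $j\neq k$ commute with $\partial_k$, so I apply the 1D $L^2$ bound in the $d-1$ directions $j \neq k$ and the 1D $H^1$ bound (with constant $1$) in the $k$-th direction. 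This yields $\|\partial_k \II_{p-1}^{\otimes d} u\|_0^2 \leq (2+4/(2p-3))^{d-1} \|\partial_k u\|_0^2$, and summing over $k$ gives the stated bound.
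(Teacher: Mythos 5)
Your overall architecture --- establish the two one-dimensional bounds and then tensorize --- is the same as the paper's, and two of the three pieces are complete and correct. Your 1D $H^1$ argument is a clean variant: the paper obtains the same Pythagorean identity $|u|_1^2 = |\II_{p-1}u|_1^2 + |r|_1^2$ by writing $r = c\chi_p$ and using the explicit identity $\chi_p' = (p^2-p)P_{p-1}$, which is orthogonal to $(\II_{p-1}u)' \in \QQ_{p-2}$; your route via integration by parts and the $(2p-3)$-exactness of the $p$-point Lobatto rule reaches the same conclusion without invoking any Legendre identities. Your tensorization (commute $\partial_k$ past the interpolants in the other directions, apply the $L^2$ bound $d-1$ times and the $H^1$ bound once in each term) is exactly the content of the paper's Kronecker-product generalized-eigenvalue argument, and the degree bookkeeping you need for the fiber-wise applications checks out.

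The one genuine gap is the 1D $L^2$ bound, where you stop at ``the main technical obstacle.'' Your route through the discrete norm does work: since $u - \II_{p-1}u$ is a multiple of $\chi_p$, which vanishes at the nodes, $\sum_i w_i u(\xi_i)^2 = \sum_i w_i (\II_{p-1}u)(\xi_i)^2$, and the resulting quadratic form decouples into the $a_{p-1}$ direction (ratio $2+1/(p-1)$, via $\sum_i w_i P_{p-1}(\xi_i)^2 = 2/(p-1)$) and the $(a_{p-2},c)$ block, whose extremal ratio is exactly $2+4/(2p-3)$. But this is precisely the computation that produces the constant, and you have not carried it out. The paper avoids the discrete norm altogether: writing $u = u_{p-1} + c\chi_p$ and expanding $u_{p-1}$ in Legendre polynomials, Bonnet's recursion gives $\int u_{p-1}\chi_p = -a_{p-2}\tfrac{p^2-p}{2p-1}\tfrac{2}{2p-3}$, which is bounded by $\gamma\|u_{p-1}\|_0\|\chi_p\|_0$ with $\gamma^2 = \tfrac{2/(2p-3)}{2/(2p+1)+2/(2p-3)}$; Young's inequality then yields $\|u\|_0^2 \ge (1-\gamma^2)\|u_{p-1}\|_0^2$, and $1/(1-\gamma^2) = 2+4/(2p-3)$. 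I would complete your proof along those lines rather than via the quadrature-error bookkeeping, which is longer and yields the same constant.
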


\noindent
Furthermore, interpolation at Gauss--Lobatto nodes satisfies the following error estimate.

\begin{lem}
   \label{lem:p-pm1-interp}
   For any $u \in \QQ_p$ it holds that
   \[
      \| u - \II_{p-1} u \|_0^2 \leq \frac{2}{4p^2 - 4p - 3} | u |_{1}^2.
   \]
\end{lem}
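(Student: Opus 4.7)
The plan is to reduce the estimate to the single monomial $P_p$ in the Legendre basis, use the characterization of $P_p - \II_{p-1} P_p$ as the Lobatto polynomial, and then compare to $|u|_1^2$ via the top Legendre coefficient.

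First, I would observe that $\II_{p-1}$ is a projection onto $\QQ_{p-1}$, so if we expand $u = u_{p-1} + a_p P_p$ with $u_{p-1} \in \QQ_{p-1}$ and $P_p$ the Legendre polynomial, then $u - \II_{p-1} u = a_p\,(P_p - \II_{p-1} P_p)$. Thus it suffices to compute $\|P_p - \II_{p-1} P_p\|_0^2$ exactly, and to bound $|u|_1^2$ from below by a multiple of $a_p^2$.

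Next, $P_p - \II_{p-1} P_p$ is a polynomial of degree at most $p$ vanishing at all $p$ Gauss--Lobatto nodes, so it is proportional to the Lobatto polynomial $L_p(x) = (1-x^2) P_{p-1}'(x)$. Matching leading coefficients (using that the leading coefficient of $P_p$ is $\frac{2p-1}{p}$ times that of $P_{p-1}$, and that of $L_p$ is $-(p-1)$ times that of $P_{p-1}$) gives the proportionality constant $-\frac{2p-1}{p(p-1)}$. To get $\|L_p\|_0^2$ in closed form, I would use the classical identity $(1-x^2) P_{p-1}'(x) = (p-1)(P_{p-2}(x) - x P_{p-1}(x))$ together with the three-term recurrence to rewrite
\[
   L_p \;=\; \frac{p(p-1)}{2p-1}\bigl(P_{p-2} - P_p\bigr),
\]
and then orthogonality and $\|P_k\|_0^2 = 2/(2k+1)$ give
\[
   \|L_p\|_0^2 \;=\; \frac{4 p^2 (p-1)^2}{(2p-3)(2p-1)(2p+1)},
\]
hence $\|P_p - \II_{p-1} P_p\|_0^2 = \frac{4(2p-1)}{(2p-3)(2p+1)}$.

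For the seminorm, I would expand $u' = \sum_{j=0}^{p-1} b_j P_j$. Since $P_p' = (2p-1)P_{p-1} + \text{(terms in }\QQ_{p-2}\text{)}$ and $u_{p-1}' \in \QQ_{p-2}$, orthogonality forces $b_{p-1} = (2p-1) a_p$, so
\[
   |u|_1^2 \;=\; \sum_{j=0}^{p-1} b_j^2 \, \tfrac{2}{2j+1}
   \;\geq\; b_{p-1}^2 \cdot \tfrac{2}{2p-1}
   \;=\; 2(2p-1)\, a_p^2.
\]
Combining with the exact formula from the previous step yields
\[
   \frac{\|u - \II_{p-1} u\|_0^2}{|u|_1^2}
   \;\leq\; \frac{a_p^2 \cdot 4(2p-1)/[(2p-3)(2p+1)]}{2(2p-1)\,a_p^2}
   \;=\; \frac{2}{(2p-3)(2p+1)} \;=\; \frac{2}{4p^2-4p-3},
\]
as desired. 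The only genuinely non-routine step is the closed-form evaluation of $\|L_p\|_0^2$; once the identity $L_p = \frac{p(p-1)}{2p-1}(P_{p-2} - P_p)$ is in hand, everything else is leading-coefficient bookkeeping (and scaling by the reference interval contributes only inert constants).
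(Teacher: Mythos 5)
Your proposal is correct and follows essentially the same route as the paper: both identify $u - \II_{p-1}u$ as $a_p$ times a fixed multiple of the Lobatto polynomial $(1-x^2)P_{p-1}'$, expand that polynomial as a multiple of $P_p - P_{p-2}$ via the Legendre recurrences to evaluate its $L^2$ norm exactly, and bound $|u|_1^2$ from below by the $P_{p-1}$ contribution $2(2p-1)a_p^2$ coming from $P_p' = (2p-1)P_{p-1} + \cdots$. The only cosmetic difference is that you fix the proportionality constant by matching leading coefficients rather than reading it off directly from the paper's formula \eqref{eq:chi}, and all your intermediate constants check out.
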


\noindent
The proof of this result is also deferred until \Cref{sec:technical}.

\section{Preconditioning}
\label{sec:preconditioning}

In this section, we study three preconditioners for the discretized problem \eqref{eq:variational-problem}.
The analysis and exposition will be restricted to the two-dimensional case;
however, the extension to $d = 3$ should follow analogously.
In \Cref{sec:subspace}, we consider a subspace correction preconditioner based on a lowest-order $H^1$-conforming coarse space together with overlapping vertex patch spaces.
In \Cref{sec:fictitious}, a simple fictitious space preconditioner is studied, using the degree-$p$ discontinuous Galerkin space as a fictitious space.
In \Cref{sec:auxiliary}, the fictitious space preconditioner is further developed, using the degree-$(p-1)$ discontinuous Galerkin space as an auxiliary space, combined with a block Jacobi smoother with blocks corresponding to the geometric entities in the mesh.
Convergence estimates of each of the preconditioners are provided.

\subsection{Subspace correction preconditioners}
\label{sec:subspace}

The strategy pursued in this section is that of subspace correction (i.e.\ domain decomposition and Schwarz methods, see \cite{Toselli2005,Xu1992,Xu2001}, among others).
For simplicity, we consider here additive methods (parallel subspace correction methods).
The goal is to reduce the problem \eqref{eq:variational-problem} posed in the $\Hdiv$ finite element space to a vector Poisson problem posed on a related space;
the related space is chosen so that the resulting problem is a canonical discretization of the vector Poisson problem, such that standard, efficient solvers exist, and may be used without modification.
The reduction to such an associated problem is performed by solving a collection of small, local problems, essentially playing the role of a smoother in a two-grid method;
the related canonical problem represents the coarse grid.

The finite element space $\Vh$ is decomposed into a sum of $K+1$ subspaces,
\begin{equation}
   \label{eq:space-decomp}
   \Vh = \VV_0 + \sum_{i=1}^K \VV_i,
\end{equation}
where by convention $\VV_0$ is a coarse global space, and $\VV_i$ for $i \geq 0$ are small, local spaces.
We define the coarse space $\VV_0$ to be the lowest-order vector-valued $H^1$-conforming finite element space.
As in the method of \Citeauthor{Pavarino1992} (cf.~\cite{Pavarino1992,Pavarino1993}), the local spaces $\VV_i$ are chosen to correspond to vertex patches defined as follows.
Let $\mathcal{V}(i)$ denote the set of elements $\kk \in \T$ containing the $i$th vertex;
let $\Omega_i = \bigcup_{\kk \in \mathcal{V}(i)} \kk$.
Define the space $\VV_i$ to be the subspace of $\Vh$ consisting of all functions $\bm v_h$ that vanish outside of $\Omega_i$.

The following simple result verifies that the coarse space $\VV_0$ is a subspace of $\Vh$.
\begin{lem}
   \label{lem:bilinear-subspace}
   If the element $\kk$ is straight-sided and $p \geq 2$, then the local space $\VV(\kk) = \det(J)^{-1} J \VV(\hat{\kk})$ contains the space of vector-valued bilinear polynomials $\QQ_1 \times \QQ_1$.
\end{lem}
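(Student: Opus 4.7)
The plan is to exhibit the Piola preimage of a bilinear function explicitly via the matrix adjugate. Given $\hat{\bm q} \in \QQ_{1,1} \times \QQ_{1,1}$ (the pullback to $\khat$ of an element of $\QQ_1 \times \QQ_1$), I set $\hat{\bm v} := \operatorname{adj}(J) \hat{\bm q}$. The Cramer identity $J \operatorname{adj}(J) = \det(J) I$ then immediately yields $\det(J)^{-1} J \hat{\bm v} = \hat{\bm q}$, which is exactly the Piola representation needed to conclude $\hat{\bm q} \in \det(J)^{-1} J \VV(\khat) = \VV(\kk)$, provided only that $\hat{\bm v}$ lies in $\VV(\khat) = \QQ_{p,p-1} \times \QQ_{p-1,p}$.

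Verifying this polynomial containment is a short degree count that exploits the specific structure of the Jacobian of a bilinear map. Writing $T_\kk = (T_1, T_2)^\tr$ with each $T_i \in \QQ_{1,1}$, the derivative $\partial_{\hat x} T_i$ is independent of $\hat x$ while $\partial_{\hat y} T_i$ is independent of $\hat y$, so the first column of $J$ has entries in $\QQ_{0,1}$ and the second column has entries in $\QQ_{1,0}$. Taking the adjugate swaps rows and columns (up to signs), so the first row of $\operatorname{adj}(J)$ has entries in $\QQ_{1,0}$ and the second row in $\QQ_{0,1}$. Multiplying against the $\QQ_{1,1}$ components of $\hat{\bm q}$ then gives $\hat{\bm v} \in \QQ_{2,1} \times \QQ_{1,2}$, which embeds into $\QQ_{p,p-1} \times \QQ_{p-1,p}$ exactly when $p \geq 2$, matching the hypothesis of the lemma.

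The argument is entirely constructive and amounts to a single polynomial degree computation; I do not anticipate any serious obstacle beyond keeping conventions straight between the physical element $\kk$ and the reference element $\khat$ and between the Piola-mapped space $\VV(\kk)$ and its reference counterpart $\VV(\khat)$. The argument is tight in the sense that the tensor-product shape of $\VV(\khat)$ is precisely calibrated to absorb the extra factor of $\operatorname{adj}(J)$ that the bilinear mesh transformation introduces; a mesh mapping of higher polynomial degree would require a correspondingly larger $p$ for the same conclusion to hold.
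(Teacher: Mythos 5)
Your proposal is correct and follows essentially the same route as the paper: exhibit the Piola preimage as $\operatorname{adj}(J)\hat{\bm q}$, use $J\operatorname{adj}(J)=\det(J)I$, and check by a degree count that the bilinear structure of $T_\kk$ forces $\operatorname{adj}(J)\hat{\bm q}\in\QQ_{2,1}\times\QQ_{1,2}\subseteq\QQ_{p,p-1}\times\QQ_{p-1,p}$ for $p\geq 2$. The paper writes out the entries of $J$ and $\operatorname{adj}(J)$ explicitly rather than arguing column-wise, but the content is identical.
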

\begin{proof}
   Since $\kk$ is straight-sided, the transformation $T_\kk$ is bilinear, and its Jacobian matrix takes the form
   \[
      J = \begin{pmatrix}
         a_{11} + b_{11} y & a_{12} + b_{12} x \\
         a_{21} + b_{21} y & a_{22} + b_{22} x
      \end{pmatrix}.
   \]
   Its adjugate is given by
   \[
      \operatorname{adj}(J) = \det(J) J^{-1}
      = \begin{pmatrix}
         a_{22} + b_{22} x & -a_{12} - b_{12} x \\
         -a_{21} - b_{21} y & a_{11} + b_{11} y
      \end{pmatrix},
   \]
   and so, for any $\bm u \in \QQ_1 \times \QQ_1$, $\bm v = \operatorname{adj}(J) \bm u \in \QQ_{2,1} \times \QQ_{1,2}$.
   Then, $\bm u = \det(J)^{-1} J \bm v$, and hence $\bm u \in \VV(\kk)$.
\end{proof}

For each space $\VV_i$, denote the elliptic projection $P_i : \Vh \to \VV_i$ by
\[
   a(P_i \bm u, \bm u_i) = a(\bm u, \bm u_i) \quad\text{for all $\bm u_i \in \VV_i$}.
\]
Let $A : \Vh \to \Vh$ denote the self-adjoint positive-definite operator such that $a(\bm u_h, \bm v_h) = (A \bm u_h, \bm v_h)$ for all $\bm u_h, \bm v_h \in \Vh$, where $(\cdot, \cdot)$ denotes the $L^2$ inner product.
Let $A_i$ denote the restriction of $A$ to the subspace $\VV_i$, and let $Q_i$ denote the $L^2$ projection onto $\VV_i$.
The preconditioned system $P$ (assuming exact solvers) is given by
\[
   P = \sum_{i=0}^K P_i = \left( \sum_{i=0}^K A_i^{-1} Q_i \right) A,
\]
where the last equality follows from the identity $A_i P_i = Q_i A$.
In practice, it may not be feasible to form the preconditioned system $P$ because it requires the computation of the inverses $A_i$, which (particularly for the coarse space $\VV_0$) may be prohibitively large.
Therefore, subspace inverses $A_i^{-1}$ can be replaced with spectrally equivalent approximate inverses $R_i$ to obtain the additive subspace preconditioner $B$
\[
   B = \sum_{i=0}^K R_i Q_i.
\]
The preconditioned system using approximate inverses is given by
\[
   T = B A = \sum_{i=0}^K T_i = \sum_{i=0}^K R_i Q_i A.
\]
The following well-known identity will be the main tool used to estimate the condition number of $T$.

\begin{lem}[{\cite[Lemma 2.4]{Xu2002}}]
   For any $\bm v_h \in \Vh$ it holds that
   \begin{equation}
      a(T^{-1} \bm v_h, \bm v_h) = \inf_{\substack{\bm v_i \in \VV_i \\ \sum \bm v_i = \bm v_h}} \sum_{i=0}^K a(T_i^{-1} \bm v_i, \bm v_i).
   \end{equation}
   In the case that $R_i$ is spectrally equivalent to $A_i^{-1}$, i.e.~$a(R_i \bm v_i, \bm v_i) \approx (\bm v_i, \bm v_i)$ it then follows that
   \begin{equation}
      \label{eq:inf-identity}
      a(T^{-1} \bm v_h, \bm v_h) \approx \inf_{\substack{\bm v_i \in \VV_i \\ \sum \bm v_i = \bm v_h}} \sum_{i=0}^K a(\bm v_i, \bm v_i).
   \end{equation}
\end{lem}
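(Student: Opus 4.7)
The plan is to reformulate the subspace correction setup as an abstract fictitious-space operator and to invoke the standard minimization identity for $\mathcal{R}\mathcal{R}^*$. Let $X = \prod_{i=0}^K \VV_i$ and define the sum map $\mathcal{R} : X \to \Vh$ by $\mathcal{R}((\bm v_i)) = \sum_{i=0}^K \bm v_i$. If I can exhibit inner products on $X$ and on $\Vh$ under which $T = \mathcal{R}\mathcal{R}^*$, then the general Hilbert-space identity
\begin{equation*}
   ((\mathcal{R}\mathcal{R}^*)^{-1} \bm y, \bm y)_Y = \inf_{\mathcal{R} \bm x = \bm y} (\bm x, \bm x)_X
\end{equation*}
will deliver the result. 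This abstract identity has a short proof: any $\bm x$ with $\mathcal{R}\bm x = \bm y$ decomposes as $\bm x = \mathcal{R}^*(\mathcal{R}\mathcal{R}^*)^{-1}\bm y + \bm z$ with $\bm z \in \ker \mathcal{R}$; the two summands are $X$-orthogonal, so $(\bm x, \bm x)_X = ((\mathcal{R}\mathcal{R}^*)^{-1} \bm y, \bm y)_Y + (\bm z, \bm z)_X$, which is minimized by $\bm z = 0$.

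Concretely, I would equip $\Vh$ with the $a$-inner product and $X$ with
\begin{equation*}
   [(\bm u_i), (\bm v_i)] = \sum_{i=0}^K a(T_i^{-1} \bm u_i, \bm v_i),
\end{equation*}
where $T_i$ is interpreted as its restriction $T_i|_{\VV_i} = R_i A_i : \VV_i \to \VV_i$, invertible under the standing SPD assumption on $R_i$. To identify the adjoint, unwinding definitions shows that $(\mathcal{R}^* \bm v)_i \in \VV_i$ must satisfy $a(T_i^{-1}(\mathcal{R}^* \bm v)_i, \bm u_i) = a(\bm v, \bm u_i)$ for every $\bm u_i \in \VV_i$; equivalently, $T_i^{-1}(\mathcal{R}^* \bm v)_i = P_i \bm v$. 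The standard identity $A_i P_i = Q_i A$ then gives $(\mathcal{R}^* \bm v)_i = T_i P_i \bm v = R_i Q_i A \bm v = T_i \bm v$, and hence $\mathcal{R} \mathcal{R}^* \bm v = \sum_i T_i \bm v = T \bm v$. Substituting $\bm y = \bm v_h$ in the abstract identity produces the first equation.

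The spectral-equivalence statement follows easily: since $T_i^{-1} = A_i^{-1} R_i^{-1}$ on $\VV_i$, we have $a(T_i^{-1} \bm v_i, \bm v_i) = (R_i^{-1} \bm v_i, \bm v_i)$. The hypothesis $a(R_i \bm w_i, \bm w_i) \approx (\bm w_i, \bm w_i)$ is equivalent to $(R_i^{-1} \bm w_i, \bm w_i) \approx (A_i \bm w_i, \bm w_i) = a(\bm w_i, \bm w_i)$, so $a(T_i^{-1} \bm v_i, \bm v_i) \approx a(\bm v_i, \bm v_i)$ summand by summand, and the exact identity converts into \eqref{eq:inf-identity}.

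The main obstacle is purely notational: one must track carefully which inner product lives on $X$ versus on $\Vh$, and must distinguish $T_i$ viewed as a map $\Vh \to \VV_i$ from its restriction to $\VV_i$, which is what appears inside $T_i^{-1}$ in the definition of $[\cdot,\cdot]$. Beyond this bookkeeping, the argument is a direct application of a standard Hilbert-space identity, exactly as in \cite{Xu2002}.
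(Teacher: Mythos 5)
Your proposal is correct, but note that the paper itself does not prove this lemma at all --- it is quoted verbatim from \cite[Lemma 2.4]{Xu2002} --- so there is no in-paper argument to compare against; what you have written is the standard proof of this identity via the $\mathcal{R}\mathcal{R}^*$ (fictitious/product-space) formulation, which is essentially the route taken in the cited source. The key steps all check out: the restriction $T_i|_{\VV_i} = R_i A_i$ is invertible and $a(T_i^{-1}\bm v_i, \bm v_i) = (R_i^{-1}\bm v_i, \bm v_i)$ defines a genuine inner product on each factor; the adjoint computation $(\mathcal{R}^*\bm v)_i = T_i P_i \bm v = R_i Q_i A \bm v$ correctly uses $A_i P_i = Q_i A$; surjectivity of $\mathcal{R}$ (hence invertibility of $\mathcal{R}\mathcal{R}^* = T$) follows from the space decomposition $\Vh = \sum_i \VV_i$; and the passage from $a(R_i\bm w_i,\bm w_i)\approx(\bm w_i,\bm w_i)$ to $(R_i^{-1}\bm w_i,\bm w_i)\approx a(\bm w_i,\bm w_i)$ holds in the direction you need because the numerical range of $A_iR_i$ contains its (real, positive) spectrum.
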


To establish an upper bound on the spectrum of $T$, we make use of a standard argument involving the finite overlap of the space decomposition \eqref{eq:space-decomp}.

\begin{lem}
   \label{lem:finite-overlap}
   For any $\bm v_h \in \Vh$, it holds that
   \begin{equation}
      a(T^{-1} \bm v_h, \bm v_h) \gtrsim a(\bm v_h, \bm v_h).
   \end{equation}
\end{lem}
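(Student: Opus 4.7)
The plan is to combine the infimum identity \eqref{eq:inf-identity} with the continuity and coercivity of $a$ on the mesh-dependent norm $\iii\cdot\iii$ (\Cref{lem:norm-equiv}), reducing the problem to a bound of the form
\[
   \iii \bm v_h \iii^2 \lesssim \sum_{i=0}^K \iii \bm v_i \iii^2
\]
that must hold for every admissible decomposition $\bm v_h = \sum_{i=0}^K \bm v_i$ with $\bm v_i \in \VV_i$. Because $a(T^{-1}\bm v_h,\bm v_h) \approx \inf \sum_i a(\bm v_i,\bm v_i) \approx \inf \sum_i \iii \bm v_i \iii^2$ while $a(\bm v_h,\bm v_h) \approx \iii \bm v_h \iii^2$, such a pointwise (in the decomposition) bound immediately yields the claimed inequality.

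To establish the norm bound, I would rely on the finite-overlap property of the decomposition \eqref{eq:space-decomp}. Each element $\kk\in\T$ lies in at most $N_o$ vertex patches $\Omega_i$ (with $N_o$ depending only on $d$ and the shape regularity), and each face $e\subset\Gamma$ is shared by at most $N_o'$ of the patches. First I would handle the broken gradient term: expanding $\nabla_h \bm v_h = \sum_i \nabla_h \bm v_i$ and applying Cauchy--Schwarz pointwise gives
\[
   |\nabla_h \bm v_h(\bm x)|^2 \le N_o \sum_{i\,:\,\bm x\in\Omega_i} |\nabla_h \bm v_i(\bm x)|^2,
\]
which integrates to $\|\nabla_h \bm v_h\|_0^2 \lesssim \sum_i \|\nabla_h \bm v_i\|_0^2$. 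The coarse term $\bm v_0\in\VV_0$ contributes only through its gradient, since $\llb \bm v_0 \rrb = 0$.

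Next I would treat the penalty term. On any face $e\subset\Gamma$, the jump of $\bm v_h$ decomposes as $\llb \bm v_h \rrb = \sum_i \llb \bm v_i \rrb$, where only $O(1)$ of the $\bm v_i$ have a support touching $e$, so the same Cauchy--Schwarz argument yields
\[
   \|\alpha^{1/2}\llb \bm v_h \rrb\|_{0,\Gamma}^2 \lesssim \sum_{i=1}^K \|\alpha^{1/2}\llb \bm v_i \rrb\|_{0,\Gamma}^2.
\]
Adding the two estimates gives $\iii \bm v_h \iii^2 \lesssim \sum_i \iii \bm v_i \iii^2$, and \Cref{lem:norm-equiv} converts this into $a(\bm v_h,\bm v_h) \lesssim \sum_i a(\bm v_i,\bm v_i)$. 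Taking the infimum over all admissible decompositions and invoking \eqref{eq:inf-identity} concludes the proof.

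The main obstacle, which is really a bookkeeping point rather than a technical one, is to argue correctly that the jumps of the vertex-patch pieces $\bm v_i$ on a given face $e$ only involve a bounded number of indices: some patches contain both sides of $e$ and thus contribute the "native" jump of $\bm v_i$ across $e$, while others contain only one side so that $\bm v_i$ effectively jumps against zero at $\partial \Omega_i \cap e$; in either case the count is bounded by shape regularity, so finite overlap applies uniformly and the coefficient $\alpha = \eta p^2/h$ cancels cleanly on both sides.
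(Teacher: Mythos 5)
Your proof is correct and follows essentially the same route as the paper: both arguments establish $a(\bm v_h,\bm v_h) \lesssim \sum_i a(\bm v_i,\bm v_i)$ for every admissible decomposition using the finite overlap of the vertex patches together with Cauchy--Schwarz, and then conclude via the infimum identity \eqref{eq:inf-identity}. The only difference is cosmetic: the paper bounds the cross terms $a(\bm v_i,\bm v_j)$ directly in the $a$-inner product, while you route the overlap counting through the DG norm $\iii\cdot\iii$ via \Cref{lem:norm-equiv}; both are valid.
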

\begin{proof}
   For any decomposition of $\bm v_h \in \Vh$ as $\bm v_h = \sum_{i=0}^K \bm v_i$ with $\bm v_i \in \VV_i$, it holds
   \begin{align*}
      a(\bm v_h, \bm v_h)
         = a\left(\sum \bm v_i, \sum \bm v_i\right)
         = \sum_{i=0}^{K} \Big( a(\bm v_i, \bm v_i) + \sum_{j \neq i} a(\bm v_i, \bm v_j) \Big).
   \end{align*}
   Note that for $i, j \geq 1$, then $a(\bm v_i, \bm v_j)$ is nonzero only when the spaces $\VV_i$ and $\VV_j$ correspond to vertices belonging to the same element.
   Therefore, the number of nonzero terms in the sum $\sum_{j \neq i} a(\bm v_i, \bm v_j)$ is bounded by the valence of the mesh $\T$ (plus one for the coarse space $\VV_0$).
   For each $i, j$, Cauchy--Schwarz and Young's inequality give $2 a(\bm v_i, \bm v_j) \leq a(\bm v_i, \bm v_i) + a(\bm v_j, \bm v_j)$, and so
   \[
      a(\bm v_h, \bm v_h) \lesssim \sum_{i=0}^K a(\bm v_i, \bm v_i).
   \]
   Since this holds for any such decomposition, \eqref{eq:inf-identity} implies
   \begin{align*}
      a(T^{-1} \bm v_h, \bm v_h)
         \approx \inf_{\substack{\bm v_i \in \VV_i \\ \sum \bm v_i = \bm v_h}} \sum_{i=0}^K a(\bm v_i, \bm v_i)
         \gtrsim a(\bm v_h, \bm v_h),
   \end{align*}
   providing a lower bound for the spectrum of $T^{-1}$.
\end{proof}

In order to establish a lower bound on the spectrum of $T$, it suffices to exhibit a \textit{stable decomposition} of $\bm v_h \in \Vh$.
We first construct the coarse approximation $\bm v_0 \in \VV_0$ to $\bm v_h$ using the Oswald averaging operator and nodal interpolation.
Recall that the Oswald averaging operator is defined by \eqref{eq:oswald}.
For any $\bm v_h \in \Vh$, its image $Q_h(\bm v_h)$ has well-defined values at all vertices in the mesh $\T$.
Let $\II_{\VV_0}$ denote lowest-order nodal interpolation at the mesh vertices.
Then, the coarse approximation $\bm v_0$ is given by
\[
   \bm v_0 = \II_{\VV_0} ( Q_h ( \bm v_h )).
\]
Let $\bm v_r$ denote the remainder $\bm v_r = \bm v_h - \bm v_0$.
For the $i$th mesh vertex, let $\bm \lambda_i$ denote the vector-value piecewise bilinear function that takes value 1 at vertex $i$, and vanishes at all other mesh vertices.
Note that the functions $\bm \lambda_i$ form a partition of unity, and so $\bm v_r = \sum_i \bm \lambda_i \odot \bm v_r$,
where $\odot$ denotes the elementwise (Hadamard) product.
Since the interpolation operator $\II_{\Vh}$ is a linear projection we have that $\bm v_r = \II_{\Vh} (\bm v_r) = \sum_i \II_{\Vh}(\bm \lambda_i \odot \bm v_r)$.
Therefore, defining $\bm v_i = \II_{\Vh}(\bm \lambda_i \odot \bm v_r)$ results in the decomposition
\begin{equation}
   \label{eq:decomp}
   \bm v_h = \bm v_0 + \sum_{i=1}^K \bm v_i, \quad \bm v_i \in \VV_i.
\end{equation}

\begin{lem}
   \label{lem:stable-decomp}
   The decomposition \eqref{eq:decomp} satisfies
   \[
      \sum_{i=0}^K \iii \bm v_i \iii^2 \lesssim \eta p^2 \iii \bm v_h \iii^2
   \]
   from which it follows that
   \[
      a(T^{-1} \bm v_h, \bm v_h) \lesssim \eta p^2 a(\bm v_h, \bm v_h).
   \]
   In the case that all mesh elements $\kk \in \T$ are affine, then the following improved bound is obtained,
   \[
      a(T^{-1} \bm v_h, \bm v_h) \lesssim a(\bm v_h, \bm v_h).
   \]
\end{lem}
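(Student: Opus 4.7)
The plan is to bound each summand $\iii \bm v_i \iii^2$ individually and then sum with finite overlap, so that combined with \eqref{eq:inf-identity} the operator-level estimate follows. I begin with the coarse component $\bm v_0 = \II_{\VV_0}(Q_h(\bm v_h))$. Since $\bm v_0$ is piecewise (bi)linear and Piola-mapped, its element-wise gradient can be controlled by its vertex values, which come from $Q_h(\bm v_h)$. Comparing $Q_h(\bm v_h)$ to $\bm v_h$ via \Cref{lem:oswald} yields $\|\bm v_h - Q_h \bm v_h\|_0^2 \lesssim (h/p^2)\|\llb\bm v_h\rrb\|_{0,\Gamma}^2 \lesssim (h^2/(\eta p^4))\,\iii\bm v_h\iii^2$, and a standard finite-dimensional inverse estimate on each element combined with \Cref{lem:h1-interp-stability} gives $\|\nabla \bm v_0\|_0 \lesssim \iii \bm v_h\iii$. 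In the affine case, $Q_h \bm v_h \in [H^1(\Omega)]^d$, so the jumps of $\bm v_0$ vanish identically and $\iii \bm v_0 \iii = \|\nabla \bm v_0\|_0$; on general meshes the jumps of $\bm v_0$ across element faces must be controlled by the norm of $Q_h \bm v_h$ itself, and this is the source of the $\eta p^2$ deterioration.

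Next I handle the patch components $\bm v_i = \II_{\Vh}(\bm\lambda_i \odot \bm v_r)$, where $\bm v_r = \bm v_h - \bm v_0$. Using the product rule, $\|\bm\lambda_i\|_{L^\infty}\leq 1$, and the inverse estimate $\|\nabla\bm\lambda_i\|_{L^\infty}\lesssim 1/h$, I would estimate $\|\nabla(\bm\lambda_i\odot\bm v_r)\|_{0,\Omega_i}^2 \lesssim \|\nabla \bm v_r\|_{0,\Omega_i}^2 + h^{-2}\|\bm v_r\|_{0,\Omega_i}^2$. The $H^1$-stability of Gauss--Lobatto tensor-product interpolation (\Cref{lem:h1-interp-stability}), transferred through the Piola map, then gives the same bound for $\|\nabla \bm v_i\|_0$ up to constants. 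The $L^2$ mass $\|\bm v_r\|_{0,\Omega_i}$ is split as $\|\bm v_h - Q_h\bm v_h\|_0 + \|Q_h\bm v_h - \II_{\VV_0}Q_h\bm v_h\|_0$: the first term is bounded via \Cref{lem:oswald}, and the second via a standard interpolation error estimate on the (elementwise) $H^1$ function $Q_h\bm v_h$. Combining yields $\|\nabla\bm v_i\|_{0,\Omega_i}^2 \lesssim \iii\bm v_h\iii_{\Omega_i}^2$ on affine meshes (and with the extra $\eta p^2$ factor otherwise).

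The harder piece is the jump contribution $\|\alpha^{1/2}\llb\bm v_i\rrb\|_{0,\Gamma}^2$. On an interior face $e\subset\partial\Omega_i$ shared by two elements of the patch, the normal component of $\bm v_i$ is continuous by definition of $\Vh$; the tangential jump equals $\llb\II_{\Vh}(\bm\lambda_i\odot\bm v_r)\rrb$, which, after using that $\bm\lambda_i$ is globally continuous, collapses to (an interpolated version of) $\bm\lambda_i\llb\bm v_r\rrb$. On faces where $\bm v_0\in[H^1]^d$ (the affine case), $\llb \bm v_r\rrb = \llb \bm v_h \rrb$ on those faces, so the contribution is directly controlled by $\|\alpha^{1/2}\llb\bm v_h\rrb\|_{0,\Gamma\cap\overline{\Omega_i}}^2$. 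On non-affine meshes an extra $\|\alpha^{1/2}\llb\bm v_0\rrb\|_{0,\Gamma}^2$ appears, and bounding it by $\iii\bm v_h\iii^2$ requires the trace inequality in \Cref{lem:trace} together with $\|Q_h\bm v_h-\bm v_h\|_0^2\lesssim (h/p^2)\|\llb\bm v_h\rrb\|_{0,\Gamma}^2$, which produces the factor $\alpha\cdot(p^2/h)\cdot(h/p^2) = \alpha = \eta p^2/h\cdot h = \eta p^2$ after a scaling argument. I expect this step — tracking precisely how the penalty weight $\alpha = \eta p^2/h$ interacts with the Piola-mapped, tangentially discontinuous $\bm v_0$ on non-affine patches — to be the main obstacle.

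Finally, I would sum over $i=1,\dots,K$ using that each mesh element lies in at most $2^d$ vertex patches, giving $\sum_{i\geq 1}\iii\bm v_i\iii^2\lesssim \iii\bm v_h\iii^2$ (affine) or $\lesssim \eta p^2\iii\bm v_h\iii^2$ (general). Adding the coarse bound and invoking \Cref{lem:norm-equiv} and \eqref{eq:inf-identity} delivers both stated conclusions, with the sharper affine bound producing $a(T^{-1}\bm v_h,\bm v_h)\lesssim a(\bm v_h,\bm v_h)$.
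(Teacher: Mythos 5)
Your overall architecture (coarse component via the Oswald operator and vertex interpolation, patch components via the partition of unity $\{\bm\lambda_i\}$ and Gauss--Lobatto interpolation, finite overlap, then the identity \eqref{eq:inf-identity}) is the same as the paper's, and your treatment of the gradient terms and of the affine case is essentially correct. However, there is a genuine conceptual error in where you locate the $\eta p^2$ loss. The coarse function $\bm v_0 = \II_{\VV_0}(Q_h(\bm v_h))$ lies in $\VV_0$, which is by definition the lowest-order \emph{$H^1$-conforming} space; hence $\llb \bm v_0 \rrb \equiv 0$ on \emph{every} mesh, affine or not (it is $Q_h(\bm v_h)\in\Vh$ whose $H^1$-conformity fails on non-affine meshes, not its vertex interpolant $\bm v_0$). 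Your claim that on general meshes "the jumps of $\bm v_0$ must be controlled" and that "an extra $\|\alpha^{1/2}\llb\bm v_0\rrb\|_{0,\Gamma}^2$ appears" is therefore vacuous: that term is identically zero, and bounding it cannot be the source of the factor $\eta p^2$. In particular $\llb \bm v_r \rrb = \llb \bm v_h \rrb$ on all faces in all cases, not only in the affine case.

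The term that actually produces the $\eta p^2$ degradation --- and which your proposal identifies as "an interpolated version of $\bm\lambda_i\llb\bm v_r\rrb$" but then defers without bounding --- is the skeleton interpolation error of the patch functions,
\[
   \alpha\, \| \II_{\Vh}(\bm\lambda_i \odot \bm v_r) - \bm\lambda_i \odot \bm v_r \|_{0,\Gamma_i}^2 .
\]
Since $\bm\lambda_i \odot \bm v_r$ has degree one higher than the local $\Hdiv$ space (and, on non-affine elements, the Piola map prevents the face restriction of $\II_{\Vh}$ from reducing to a stable one-dimensional face interpolation), this error is controlled via \Cref{lem:p-pm1-interp}, a scaling argument, and the trace inequality of \Cref{lem:trace}, giving a bound of order $h \| \nabla_h \bm v_r \|_{0,\Omega_i}^2$; multiplying by $\alpha = \eta p^2/h$ yields $\eta p^2 \| \nabla_h \bm v_r \|_{0,\Omega_i}^2$, which is the true origin of the factor $\eta p^2$. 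In the affine case the constant Jacobian allows $\| \llb \II_{\Vh}(\bm\lambda_i \odot \bm v_r) \rrb \|_{0,\Gamma_i}^2 \lesssim \| \llb \bm v_r \rrb \|_{0,\Gamma_i}^2$ directly (via $L^2$-stability of the face Gauss--Lobatto interpolation), which is why the loss disappears there. Without a correct bound on this interpolation error your argument does not establish the stated estimate in the general (non-affine) case.
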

\begin{proof}
   By \Cref{lem:oswald} and the eigenvalues estimates in \Cref{lem:eigval-estimates},
   \begin{equation}
      \label{eq:oswald-l2-dg}
      \begin{aligned}
         \iii \bm v_h - Q_h(\bm v_j) \iii^2
         \lesssim \frac{p^4}{h^2} \| \bm v_h - Q_h(\bm v_j) \|_0^2
         \lesssim \frac{p^2}{h} \| \llb \bm v_h \rrb \|_{0,\Gamma}^2
         \lesssim \iii \bm v_h \iii^2.
      \end{aligned}
   \end{equation}
   From this it follows that $\iii Q_h(\bm v_j) \iii^2 \lesssim \iii \bm v_h \iii^2$.
   Since $\bm v_0 = \II_{\VV_0}(Q_h(\bm v_j)) \in [H^1(\Omega)]^2$, $\llb \bm v_0 \rrb \equiv 0$.
   By stability of the nodal interpolant $\II_{\VV_0}$ in the $H^1$ norm, it follows that $\iii \bm v_0 \iii^2 \lesssim \iii \bm v_h \iii^2$.

   We now turn to $\bm v_i = \II_{\Vh}(\bm\lambda_i \odot \bm v_r) \in \VV_i$, where $\bm v_r = \bm v_h - \bm v_0$.
   Note that $\| \nabla_h \bm\lambda_i \odot \bm v_r \|_{0,\Omega_i}^2 \lesssim h^{-2} \| \bm v_r \|_{0,\Omega_i}^2 + \| \nabla_h \bm v_r \|_{0,\Omega_i}^2$.
   By stability of the interpolant (cf.~\cite{Pavarino1993} and \Cref{lem:h1-interp-stability}), it follows that $\| \nabla_h \bm v_i \|_{0,\Omega_i}^2 \lesssim h^{-2} \| \bm v_r \|_{0,\Omega_i}^2 + \| \nabla_h \bm v_r \|_{0,\Omega_i}^2 \lesssim \iii \bm v_h \iii^2$, using again \eqref{eq:oswald-l2-dg}.

   The term $\| \llb \bm v_i \rrb \|_{0,\Gamma_i}^2$ (where $\Gamma_i$ denotes the skeleton of $\Omega_i$) is bounded by
   \[
      \| \llb \bm v_i \rrb \|_{0,\Gamma_i}^2
      \lesssim \| \llb \bm v_r \rrb \|_{0,\Gamma_i}^2 + \| \II_{\Vh}(\bm\lambda_i \odot \bm v_r) - \bm\lambda_i \odot \bm v_r \|_{0,\Gamma_i}^2.
   \]
   By \Cref{lem:p-pm1-interp}, a scaling argument, and \Cref{lem:trace},
   \begin{align*}
      \| \II_{\Vh}(\bm\lambda_i \odot \bm v_r) - \bm\lambda_i \odot \bm v_r \|_{0,\Gamma_i}^2
         &\lesssim \frac{h^2}{p^2} \| \nabla_h \bm v_r \|_{0,\Gamma_i} \\
         &\lesssim h \| \nabla_h \bm v_r \|_{0,\Omega_i},
   \end{align*}
   and so $\eta p^2 h^{-1} \| \llb \bm v_i \rrb \|_{0,\Gamma_i}^2 \lesssim \eta p^2 \| \nabla_h \bm v_r \|_{0,\Omega_i}$,
   from which it follows that
   \[
      \iii \bm v_i \iii_{\Omega_i}^2 \lesssim \eta p^2 \iii \bm v_r \iii_{\Omega_i}^2 \lesssim \eta p^2 \iii \bm v_h \iii_{\Omega_i}^2.
   \]
   The conclusion then follows using the finite overlap of the subdomains $\Omega_i$.

   In the case that the mesh elements are all affine, then the element Jacobians are constant, from which the estimate $\| \llb \II_{\Vh}(\bm\lambda_i \odot \bm v_r) \rrb \|_{0,\Gamma_i}^2 \lesssim \| \llb \bm v_r \rrb \|_{0,\Gamma_i}^2$ follows, resulting in $\iii \bm v_i \iii_{\Omega_i}^2 \lesssim \iii \bm v_r \iii_{0,\Omega_i}^2$, which leads to the improved bounds.
\end{proof}

\begin{thm}
   \label{thm:subspace-conditioning}
   The condition number $\kappa(BA)$ of the preconditioned system $T = BA$ is independent of $h$.
   In the case that all mesh elements $\kk \in \T$ are affine, $\kappa(BA)$ is also independent of $p$ and $\eta$.
\end{thm}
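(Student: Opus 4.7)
The plan is to read the condition number bound directly off of the two preceding results, \Cref{lem:finite-overlap} and \Cref{lem:stable-decomp}. Since each local approximate inverse $R_i$ is symmetric positive definite and spectrally equivalent to $A_i^{-1}$, the preconditioned operator $T = BA$ is self-adjoint with respect to $a(\cdot, \cdot)$, and hence
\[
\kappa(T) = \frac{\lambda_{\max}(T)}{\lambda_{\min}(T)} = \Bigl( \sup_{\vv_h \in \Vh \setminus \{\bm 0\}} \frac{a(T^{-1}\vv_h, \vv_h)}{a(\vv_h, \vv_h)} \Bigr) \Big/ \Bigl( \inf_{\vv_h \in \Vh \setminus \{\bm 0\}} \frac{a(T^{-1}\vv_h, \vv_h)}{a(\vv_h, \vv_h)} \Bigr).
\]

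First I would apply \Cref{lem:finite-overlap} to control the denominator: it yields $a(T^{-1}\vv_h, \vv_h) \gtrsim a(\vv_h, \vv_h)$ with a hidden constant depending only on the maximum vertex valence of $\T$, which is a property of shape-regularity and independent of $h$, $p$, and $\eta$. This bounds the infimum below by an absolute constant, equivalently $\lambda_{\max}(T) \lesssim 1$.

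Next I would apply \Cref{lem:stable-decomp} to control the numerator. On a general mesh this gives $a(T^{-1}\vv_h, \vv_h) \lesssim \eta p^2 \, a(\vv_h, \vv_h)$, so $\lambda_{\min}(T) \gtrsim (\eta p^2)^{-1}$; combined with the previous step this yields $\kappa(T) \lesssim \eta p^2$, which is independent of $h$. When every $\kk \in \T$ is affine, \Cref{lem:stable-decomp} produces the sharper estimate $a(T^{-1}\vv_h, \vv_h) \lesssim a(\vv_h, \vv_h)$ with constants independent of all three of $h$, $p$, and $\eta$, so in that case $\lambda_{\min}(T) \gtrsim 1$ and hence $\kappa(T) \lesssim 1$.

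The theorem is thus essentially a corollary of the two spectral estimates, and I do not anticipate any genuine obstacle at this stage: the real work has been absorbed into \Cref{lem:stable-decomp}, whose proof controls the coarse component via the Oswald averaging operator and the $H^1$-stability of Gauss--Lobatto interpolation (\Cref{lem:h1-interp-stability}), and whose patch-local estimates rely on the interpolation error bound of \Cref{lem:p-pm1-interp} together with the trace inequality of \Cref{lem:trace}. All that remains here is the verification that the hidden constants in both directions are genuinely $h$-independent, which follows from inspection of those proofs.
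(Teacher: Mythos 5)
Your argument is correct and is exactly the paper's proof: the theorem is stated as an immediate consequence of \Cref{lem:finite-overlap} (giving $\lambda_{\max}(T)\lesssim 1$) and \Cref{lem:stable-decomp} (giving $\lambda_{\min}(T)\gtrsim (\eta p^2)^{-1}$ in general and $\gtrsim 1$ in the affine case). Your write-up simply makes the spectral bookkeeping explicit, which matches the intended reading.
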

\begin{proof}
   This follows immediately from \Cref{lem:finite-overlap} and \Cref{lem:stable-decomp}.
\end{proof}

\begin{rem}
   \label{rem:subspace-p-scaling}
   On meshes with non-affine meshes, \Cref{thm:subspace-conditioning} and \Cref{lem:stable-decomp} result in the bound $\kappa(BA) \lesssim \eta p^2$.
   The numerical results of \Cref{sec:results} indicate that this bound is pessimistic, and in practice, the condition number of the preconditioned system scales more favorably with the polynomial degree $p$.
\end{rem}

\begin{rem}[Implementation of the subspace correction preconditioner]
   As discussed previously, the inverses $A_i^{-1}$ of the subspace problems can be replaced with spectrally equivalent approximations, $R_i$.
   In the case of the local patch subspaces, the problem sizes do not grow with mesh refinement, and typically a direct solver can be used.
   Additionally, these problems are decoupled, and may be solved independently in parallel.
   With respect to the polynomial degree, the local problem size scales as $p^{2d}$ in $d$ dimensions, and so for very high-order problems the local patch solvers can represent a bottleneck.
   The coarse space $\VV_0$ is a global space, and on refined meshes the coarse problem $A_0$ can be quite large.
   However, $A_0$ is a standard $H^1$-conforming discretization of the vector Laplacian, for which a wide range of scalable and efficient solvers and preconditioners exist, any of which can be used for $R_0$.
   In \Cref{sec:results}, $R_0$ is taken to be one V-cycle of an algebraic multigrid method.
\end{rem}

\subsection{Fictitious space preconditioning}
\label{sec:fictitious}

In this section, we pursue the approach of fictitious space preconditioning (cf.~Nepomnyaschikh, \cite{Nepomnyaschikh1991,Nepomnyaschikh2007}).
This method allows us to dispense with the vertex patch solvers, which become computationally expensive for larger polynomial degree $p$.

As in the previous section, $A : \Vh \to \Vh$ is the symmetric positive-definite operator corresponding to the bilinear form $a(\cdot\,,\cdot)$.
We consider another space (in our case a finite-dimensional vector space; more generally, a Hilbert space) $\widetilde{\VV}$, termed the \textit{fictitious space}.
Let $R : \widetilde{\VV} \to \Vh$ be a surjective mapping, and let $\widetilde{A} : \widetilde{\VV} \to \widetilde{\VV}$ be another symmetric positive-definite operator.
Then, $B = R \widetilde{A}^{-1} R^\tr$ is a symmetric positive-definite operator that will serve as the fictitious space preconditioner.
The theory of Nepomnyaschikh gives the following spectral bounds.
\begin{lem}[{\cite[Lemma 2.3]{Nepomnyaschikh1991}}]
   \label{lem:fictitious}
   Suppose that, for all $\widetilde{v} \in \widetilde{\VV}$,
   \begin{equation}
      \| R \widetilde{v} \|_A^2 \leq c_R \| \widetilde{v} \|_{\widetilde{A}}^2,
   \end{equation}
   and, given $v \in \Vh$, there exists some $\widetilde{v} \in \widetilde{\VV}$ such that $v = R \widetilde{v}$, and
   \begin{equation}
      \| \widetilde{v} \|_{\widetilde{A}}^2 \leq c_S \| v \|_A^2,
   \end{equation}
   for some constants $c_R$ and $c_S$.
   Then, for $B = R \widetilde{A}^{-1} R^\tr$,
   \[
      c_S^{-1} (A^{-1} v, v) \leq (Bv, v) \leq c_R (A^{-1} v, v),
   \]
   and so the condition number of $BA$ is bounded by
   \[
      \kappa(BA) \leq c_R c_S.
   \]
\end{lem}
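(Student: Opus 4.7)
The plan is to derive both inequalities from a single variational (duality) characterization of the quadratic forms $(Bv, v)$ and $(A^{-1}v, v)$, and then read off the condition-number bound by passing to the $A$-inner product.

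I would first note that $B = R\widetilde{A}^{-1}R^\tr$ is symmetric, and positive definite because surjectivity of $R$ forces injectivity of $R^\tr$. The workhorse identity, valid for any symmetric positive definite operator $M$ on a finite-dimensional space by Cauchy--Schwarz in the $M$-inner product (with equality attained at $u = M^{-1} g$), is
\[
   (M^{-1}g, g) = \max_{u \neq 0} \frac{(g, u)^2}{(Mu, u)}.
\]
Applied with $M = \widetilde{A}$ and $g = R^\tr v$, this gives
\[
   (Bv, v) = (\widetilde{A}^{-1}R^\tr v, R^\tr v) = \max_{\widetilde{u} \in \widetilde{\VV} \setminus \{0\}} \frac{(v, R\widetilde{u})^2}{\|\widetilde{u}\|_{\widetilde{A}}^2},
\]
while the same identity with $M = A$ gives $(A^{-1}v, v) = \max_{u \in \Vh \setminus \{0\}} (v, u)^2 / \|u\|_A^2$.

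For the upper bound, I would invoke the continuity hypothesis in the form $\|\widetilde{u}\|_{\widetilde{A}}^2 \geq c_R^{-1}\|R\widetilde{u}\|_A^2$ to enlarge the denominator in the first maximum. Since $R$ is surjective, $R\widetilde{u}$ ranges over all of $\Vh$ as $\widetilde{u}$ varies in $\widetilde{\VV}$, so the maximum collapses to $c_R \max_{u \in \Vh \setminus \{0\}} (v, u)^2 / \|u\|_A^2 = c_R (A^{-1}v, v)$. For the lower bound, I would use the stable lifting hypothesis: given any $u \in \Vh$, select a preimage $\widetilde{u}$ with $R\widetilde{u} = u$ and $\|\widetilde{u}\|_{\widetilde{A}}^2 \leq c_S \|u\|_A^2$, plug this particular $\widetilde{u}$ into the variational formula for $(Bv, v)$, and then maximize over $u$ to obtain $(Bv, v) \geq c_S^{-1}(A^{-1}v, v)$.

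Finally, setting $v = Aw$ in the resulting sandwich inequality identifies the two outer ratios as the $A$-Rayleigh quotient of $BA$; since $B$ and $A$ are both symmetric, $BA$ is self-adjoint in the $A$-inner product, so its spectrum lies in $[c_S^{-1}, c_R]$ and $\kappa(BA) \leq c_R c_S$. The only step that genuinely uses surjectivity of $R$ is the collapse of the upper-bound maximum onto $\Vh$; everything else is direct substitution into the variational identity, so I do not anticipate a real obstacle in the argument.
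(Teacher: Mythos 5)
Your argument is correct, and it is essentially the standard duality proof of Nepomnyaschikh's fictitious space lemma; the paper itself quotes this result from the literature without proof, so there is nothing to compare against in the text. The variational identity $(M^{-1}g,g)=\max_{u\neq 0}(g,u)^2/(Mu,u)$, applied with $M=\widetilde{A}$ and $g=R^\tr v$, together with the two hypotheses, gives exactly the two-sided bound, and the passage to $\kappa(BA)\leq c_Rc_S$ via self-adjointness of $BA$ in the $A$-inner product is sound. One small attribution quibble: surjectivity of $R$ is not actually what the upper bound needs --- there the maximum over $\operatorname{range}(R)$ is bounded by the maximum over all of $\Vh$ regardless, so only $\operatorname{range}(R)\subseteq\Vh$ is used; it is the lower bound, via the stable-lifting hypothesis, that genuinely requires every $v\in\Vh$ to admit a preimage. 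This does not affect the validity of your proof.
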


Applying this theory to the problem at hand, we aim to define the fictitious space $\widetilde{\VV}$, operator $\widetilde{A}$, and mapping $R$ such that the constants $c_R$ and $c_S$ are independent of the discretization parameters.
The fictitious space $\widetilde{\VV} = \Wh$ is given by the vector-valued discontinuous Galerkin space with polynomial degree $p$ defined on the mesh $\T$,
\[
   \Wh = \{ \bm w_h \in [L^2(\Omega)]^d : \bm w_h|_{\kk} \circ T_\kk \in [\mathcal{Q}_p]^d \}.
\]
The operator $\widetilde{A}$ is given by the interior penalty bilinear form \eqref{eq:ipdg} on the space $\Wh$.
The mapping $R : \Wh \to \Vh$ is defined as follows.
We first introduce the ``broken $\Hdiv$'' space $\Vhat$,
\[
   \Vhat = \{
      \vv_h \in [L^2(\Omega)]^d : \vv_h|_\kk \circ T_\kk \in \VV(\kk)
   \}.
\]
This spaces differs from $\Vh$ in that no continuity is enforced between elements.
A basis for $\Vhat$ can be constructed as in \Cref{sec:lobatto}, but no identification of shared global degrees of freedom is required.
In this way, each degree of freedom of $\Vh$ corresponds to a set of degrees of freedom of $\Vhat$, all belonging to the same geometric entity (vertex, edge, or element interior).
With this choice of basis, the injection $P : \Vh \hookrightarrow \Vhat$ can be represented as a boolean matrix with exactly one nonzero per row (but potentially multiple nonzeros per column).
The number of nonzeros in the $i$th column is the \textit{multiplicity} of the $i$th DOF, and is denoted $m_i$.
Let $M$ be the diagonal matrix with $M_{ii} = m_i$.
Then, define the operator $\Qhat : \Vhat \to \Vh$ by
\begin{equation}
   \label{eq:oswald-fictitious}
   \Qhat = M^{-1} P^\tr.
\end{equation}
Note that $\Qhat$ is a left-inverse of $P$, i.e.~$\Qhat P = I$.
under this mapping, the value of the $i$th degree of freedom is given by the average value of the associated degrees of freedom in the broken space.
This operator is similar to but distinct from that defined by \eqref{eq:oswald}

Let $\II_{\Vhat} : \Wh \to \Vhat$ be the nodal interpolant associated with the degrees of freedom given by \eqref{eq:dof},
and define $R : \Wh \to \Vh$ by
\begin{equation}
   \label{eq:transfer}
   R = \Qhat \II_{\Vhat}.
\end{equation}

\begin{lem}
   \label{lem:surjective}
   The transfer operator $R : \Wh \to \Vh$ defined by \eqref{eq:transfer} is surjective.
\end{lem}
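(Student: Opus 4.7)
The plan is to reduce surjectivity of $R = \Qhat\II_{\Vhat}$ to an element-local interpolation question, using the identity $\Qhat P = I$ already noted in the construction of $\Qhat$. Given $\bm u \in \Vh$, if one can exhibit $\bm w \in \Wh$ with $\II_{\Vhat}\bm w = P\bm u$, then
\[
   R\bm w = \Qhat\II_{\Vhat}\bm w = \Qhat P\bm u = \bm u,
\]
so it suffices to show that $\II_{\Vhat} : \Wh \to \Vhat$ is surjective. Because both spaces are fully discontinuous and the DOFs of \eqref{eq:dof} are local to a single element, this decouples across the mesh: on each $\kk \in \T$ one must show that the map $\Wh|_\kk \to \mathbb{R}^{N_\kk}$ sending $\bm w$ to its local DOF vector is surjective, where $N_\kk = 2p(p+1)$ in two dimensions.

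In the affine case this is immediate via an explicit preimage. Take $\bm w$ with $\bm w|_\kk \circ T_\kk = \det(J)^{-1} J \hat{\bm u}$, where $\hat{\bm u} \in \VV(\khat)$ is the Piola preimage of $\bm u|_\kk$. Since $J$ and $\det(J)$ are constant and $\hat{\bm u} \in \QQ_{p,p-1}\times\QQ_{p-1,p} \subset [\QQ_p]^d$, the function $\bm w|_\kk \circ T_\kk$ is polynomial and hence lies in $\Wh|_\kk$. The Piola identity $\operatorname{adj}(J)\,\det(J)^{-1}J = I$ then gives $\alpha_i(\bm w) = \alpha_i(\bm u)$ at each local DOF, so $\II_{\Vhat}\bm w = P\bm u$ as required.

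For general bilinear $T_\kk$ this construction breaks down, since $\det(J)^{-1}J\hat{\bm u}$ is rational rather than polynomial, so one must instead argue linear independence of the $\alpha_i$ on $\Wh|_\kk$ directly. I would use the dimension count $\dim \Wh|_\kk = 2(p+1)^2 \geq N_\kk$ together with the explicit structure of $\operatorname{adj}(J)$ derived in \Cref{lem:bilinear-subspace}---each of its rows is linear in a single reference variable---and the unisolvence of tensor-product Gauss--Lobatto point evaluations on $\QQ_p$. The main obstacle is precisely this general case: because $\operatorname{adj}(J)$ is spatially varying, each DOF mixes the two components of $\bm w \circ T_\kk$ with nonconstant weights, and the RT nodal grids underlying the two vector components overlap only at the four element corners and are not subsets of the full $(p+1)^2$ Gauss--Lobatto grid, so no purely componentwise argument suffices. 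The dimension slack $\dim \Wh|_\kk - \dim \Vhat|_\kk = 2(p+1)$ must absorb the coupling, which can be done by constructing $\bm w$ component by component, exploiting the separated $\hat x$- and $\hat y$-dependence of the two rows of $\operatorname{adj}(J)$ to decouple the two sets of DOF equations.
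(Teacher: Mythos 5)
Your reduction to surjectivity of $\II_{\Vhat}$ and your explicit preimage in the affine case are both correct and follow the same route as the paper. The gap is in the general bilinear case, which is the actual content of the lemma: there you give only a plan (dimension count, structure of $\operatorname{adj}(J)$, Gauss--Lobatto unisolvence) and assert that the coupling "can be done." The dimension slack $2(p+1)^2 - 2p(p+1) = 2(p+1)$ by itself proves nothing --- surjectivity of the local DOF map requires that the $2p(p+1)$ functionals \eqref{eq:dof} be linearly independent on $[\QQ_p]^2$, and the difficulty you correctly identify (each functional combines both components of $\bm w \circ T_\kk$ with spatially varying weights, evaluated on two different nodal grids) is precisely what makes that nonobvious. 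Moreover, the decoupling you propose via the rows of $\operatorname{adj}(J)$ does not work as stated: the first row $(a_{22}+b_{22}x,\, -a_{12}-b_{12}x)$ still couples $w_1$ and $w_2$ at every node of the $\QQ_{p,p-1}$ grid, so the two sets of equations do not separate componentwise.

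The missing idea, which is the heart of the paper's proof, is to parametrize the candidate preimage as $\bm w = J\,(z,0)^\tr$ (resp.\ $J\,(0,z)^\tr$) rather than attacking the DOF system directly. Because the functional \eqref{eq:dof} applies $\det(J)J^{-1}$ at the node, premultiplying by $J$ cancels $J^{-1}$ exactly, so the DOFs become diagonal in this parametrization: $\alpha_j(\bm w) = \hat{\bm n}_j^\tr \det(J(\bm x_j))\,(z(\bm x_j),0)^\tr$, which vanishes on every second-component DOF and reduces to $\det(J(\bm x_j))\,z(\bm x_j)$ on the first-component grid. Taking $z \in \QQ_{p,p-1}$ to interpolate $1/\det J$ at the target node and $0$ at the others produces the $i$th dual basis function, and the separated variable dependence you noticed --- for bilinear $T_\kk$, the first column entries $J_{11}, J_{21}$ are affine in $y$ only --- gives $J_{11}z,\, J_{21}z \in \QQ_{p,p}$, hence $\bm w \in \Wh$. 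Without this substitution (or an equivalent explicit full-rank argument for the coupled system), your treatment of the non-affine case is not a proof.
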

\begin{proof}
   Surjectivity of the Oswald operator $\Qhat$ is immediate.
   It remains to show that nodal interpolation operator $\II : \Wh \to \Vhat$ is surjective.
   It suffices to consider a single element $\kk$.
   The associated mapping $T_\kk$ is bilinear.
   We show that for all $i$, there exists $\bm w_h \in \WW_h$ such that
   \begin{align*}
      \alpha_j(\II \bm w_h) &= \delta_{ij},
   \end{align*}
   where $\alpha_j$ denotes the $j$th degree of freedom functional defined by \eqref{eq:dof};
   this implies that $\II \bm w_h = \bm\theta_i$, from which surjectivity of $\II$ follows.
   Without loss of generality, assume $\hat{\bm n}_i = (1,0)^\tr$; the case of $(0,1)^\tr$ follows by the same reasoning.
   Let $\bm x_i = (\xi_\ell, \hat{\xi}_m)$ denote the associated Gauss--Lobatto nodal point.
   Choose $z \in \QQ_{p,p-1}$ to be the unique polynomial such that
   \[
      z(\xi_{\ell'}, \hat{\xi}_{m'}) = \delta_{\ell\ell'} \delta_{mm'} \frac{1}{\det J(\xi_\ell, \hat{\chi}_m)}
   \]
   and define $\bm w_h$ by
   \[
      \bm w_h(x, y) = J(x,y) \begin{pmatrix} z(x,y) \\ 0 \end{pmatrix}
      = \begin{pmatrix}
         J_{11}(x,y) z(x,y) \\
         J_{21}(x,y) z(x,y)
      \end{pmatrix}.
   \]
   Then,
   \begin{align*}
      \alpha_j ( \II \bm w_h )
         &= \hat{\bm n}_i^\tr \det(J(\bm x_i)) J^{-1} (\bm x_i) \bm w_h(\bm x_i) \\
         &= \hat{\bm n}_i^\tr \det(J(\bm x_i)) \begin{pmatrix} z(x,y) \\ 0 \end{pmatrix} \\
         &= \delta_{ij}.
   \end{align*}
   It remains to show that $\bm w_h$ belongs to the space $\Wh$.
   Since $T_\kk$ is bilinear, it follows that $J_{11}$ and $J_{21}$ are affine functions in $y$ only.
   Since $z(x,y)$ is of degree $(p-1)$ in $y$, it follows that $J_{11} z \in \QQ_{p,p}$ and $J_{21} z \in \QQ_{p,p}$, and thus $\bm w_h \in \Wh$.
\end{proof}

\begin{lem}
   \label{lem:R-projection}
   If the element transformations $T_\kk$ are affine, then $\Vh \subseteq \Wh$, and the operator $R$ is a projection.
\end{lem}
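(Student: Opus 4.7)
The proof naturally splits into two independent claims: (i) the inclusion $\Vh \subseteq \Wh$, and (ii) the identity $R^2 = R$. I would treat them in this order, since the second builds on the first.

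For (i), the idea is to exploit the fact that under an affine $T_\kk$, the Piola factor $\det(J_\kk)^{-1} J_\kk$ is a constant matrix. Given $\bm v_h \in \Vh$, its pullback satisfies $\bm v_h|_\kk \circ T_\kk = \det(J_\kk)^{-1} J_\kk \hat{\bm v}$ for some $\hat{\bm v} \in \VV(\khat)$. Each component of $\hat{\bm v}$ lies in one of the tensor-product spaces $\QQ_{p,p-1}$ or $\QQ_{p-1,p}$ (and analogously in $d=3$), all of which are contained in $\QQ_{p,\ldots,p} = \QQ_p$. Multiplication by the constant matrix $\det(J_\kk)^{-1} J_\kk$ takes constant-coefficient linear combinations within $\QQ_p$, so every component of $\bm v_h|_\kk \circ T_\kk$ lies in $\QQ_p$, giving $\bm v_h \in \Wh$.

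For (ii), the clean route is to show that $R$ acts as the identity on $\Vh$. Since $R(\Wh) \subseteq \Vh$ by construction and $\Vh \subseteq \Wh$ by (i), we then obtain $R^2 \bm w_h = R(R \bm w_h) = R \bm w_h$ for every $\bm w_h \in \Wh$, which is the projection property. To see $R|_{\Vh} = \operatorname{id}_{\Vh}$, I would handle the two factors of $R = \Qhat \II_{\Vhat}$ separately. If $\bm v_h \in \Vh$, then on each element $\kk$ its restriction already lies in the local space $\VV(\kk)$ used to define $\Vhat$, and nodal interpolation reproduces any element of $\VV(\kk)$; hence $\II_{\Vhat}\bm v_h$ is exactly the broken representation of $\bm v_h$, i.e., in coefficient form, $\II_{\Vhat}\bm v_h = P \bm v_h$. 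Applying $\Qhat = M^{-1} P^\tr$ then reduces the claim to the matrix identity $P^\tr P = M$, which is immediate from the description of $P$ as a boolean matrix whose $i$th column contains exactly $m_i$ nonzeros. Therefore $\Qhat P = I$ and $R\bm v_h = \bm v_h$.

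No step presents a real obstacle. The only point worth verifying with care is the equality $\II_{\Vhat} \bm v_h = P \bm v_h$ for $\bm v_h \in \Vh$: this is the sole place where one uses that the chosen Gauss--Lobatto degrees of freedom from \eqref{eq:dof} are unisolvent on the local space $\VV(\kk)$, so that the nodal interpolant locally reproduces $\bm v_h|_\kk$. Everything else is linear-algebra bookkeeping and the elementary polynomial-space inclusion of part (i).
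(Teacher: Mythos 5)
Your proof is correct and follows essentially the same route as the paper: the inclusion $\Vh \subseteq \Wh$ via the constancy of the affine Piola factor and the degree bound $\QQ_{p,p-1} \times \QQ_{p-1,p} \subseteq [\QQ_p]^2$, and the projection property by showing $R$ restricts to the identity on $\Vh$ because $\II_{\Vhat}$ reproduces $\Vhat$ and $\Qhat$ is a left-inverse of the injection $P$. Your extra remark that $\Qhat P = I$ reduces to $P^\tr P = M$ is just making explicit a fact the paper already records when defining $\Qhat$.
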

\begin{proof}
   If $T_\kk$ is affine, then the Jacobian matrix $J$ (and hence its determinant) are spatially constant, and the elements of $\VV(\kk)$ are polynomials.
   Since $\VV(\kk) = \det(J_\kk) J_\kk \QQ_{p,p-1} \times \QQ_{p-1,p}$, elements of $\VV(\kk)$ are of degree at most $p$ in each variable.
   Therefore, $\widehat{\VV}_h$ consists of vector-valued piecewise polynomial functions of degree at most $p$ in each variable (with no continuity enforced between elements), and so $\widehat{\VV}_h \subseteq \Wh$.
   Since $\Vh \subseteq \widehat{\VV}_h$, the first statement follows.

   To show that $R$ is a projection, we show that $R$ acts as the identity on the subspace $\Vh$.
   Note that $R = Q \II_{\Vhat}$, and $\II_{\Vhat}$ acts on the identity on $\widehat{\VV}_h \subseteq \Wh$, and $Q$ acts as identity on $\Vh \subseteq \widehat{\VV}_h$.
   Therefore, $R \bm v_h = \bm v_h$ for all $\bm v_h \in \Vh$, and $R^2 = R$.
\end{proof}

We now provide estimates for the constants $c_R$ and $c_S$ from \Cref{lem:fictitious}, resulting in bounds on the condition number of the preconditioned system.
For certain meshes, bounds independent of discretization parameters $h$, $p$, and $\eta$ can be proven;
in the case of general meshes, the bounds will be independent of mesh size $h$, but potentially dependent on polynomial degree and penalty parameter.
In light of \Cref{lem:norm-equiv}, it will be convenient to work with the mesh-dependent DG norm $\iii \cdot \iii$ defined by \eqref{eq:dg-norm}.

\begin{thm}
   \label{thm:fictitious}
   On general meshes, the constants $c_S$ and $c_R$ from \Cref{lem:fictitious} are independent of $h$, and so the condition number $\kappa(BA)$ is bounded independent of $h$.
   If the element transformations $T_\kk$ are affine, then $c_S = 1$.
   If the mesh is Cartesian (or affinely transformed Cartesian grid) then $c_R$ and $\kappa(BA)$ are also independent of $p$ and $\eta$.
\end{thm}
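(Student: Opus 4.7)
The plan is to verify both hypotheses of \Cref{lem:fictitious} and track the $h$, $p$, and $\eta$ dependence in each of the three regimes. By \Cref{lem:norm-equiv} it suffices to establish the two bounds in terms of the DG norm $\iii\cdot\iii$ instead of $\|\cdot\|_A$ and $\|\cdot\|_{\widetilde A}$.

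I will treat $c_S$ first, since it is essentially a geometric observation. In the affine case, \Cref{lem:R-projection} shows $\Vh\subseteq\Wh$ and $R|_{\Vh} = \operatorname{id}$, so given $\bm v\in\Vh$ the choice $\widetilde{\bm v}=\bm v\in\Wh$ satisfies $R\widetilde{\bm v}=\bm v$, and since the IPDG form on $\Wh$ restricted to $\Vh$ coincides with $a(\cdot,\cdot)$ (the gradients and jumps of $\bm v$ agree when viewed in either space), we get $\|\widetilde{\bm v}\|_{\widetilde A}^2 = \|\bm v\|_A^2$, hence $c_S=1$. In the non-affine case, I would construct a preimage element by element using the local polynomial lifting built in the proof of \Cref{lem:surjective}: that lifting inverts $\II_{\Vhat}$ on each element, so composed with the injection $\Vh \hookrightarrow \Vhat$ it produces a $\widetilde{\bm v}\in\Wh$ with $R\widetilde{\bm v}=\bm v$ by construction. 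Stability $\|\widetilde{\bm v}\|_{\widetilde A}^2 \lesssim \|\bm v\|_A^2$ then follows from norm equivalence on the reference element (the Piola factors $\det(J_\kk)^{-1}J_\kk$ and their inverses have entries bounded in terms of the shape-regularity constants) combined with \Cref{lem:eigval-estimates} applied locally and summed, yielding an $h$-independent $c_S$.

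For $c_R$, given $\widetilde{\bm v}\in\Wh$, I would apply the triangle inequality $\iii R\widetilde{\bm v}\iii \leq \iii R\widetilde{\bm v} - \widetilde{\bm v}\iii + \iii\widetilde{\bm v}\iii$ and estimate the first term by an Oswald-type argument paralleling the proof of \Cref{lem:oswald}. The gradient contribution is controlled by Gauss--Lobatto $H^1$-stability (\Cref{lem:h1-interp-stability}) together with the finite multiplicity of $\Qhat$, and the jump contribution splits into the jumps of $\widetilde{\bm v}$ and an interpolation-error term bounded via \Cref{lem:p-pm1-interp} (applied in the single coordinate direction where $\II_{\Vhat}$ reduces the polynomial degree of each vector component) combined with the trace inequality of \Cref{lem:trace}. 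On a Cartesian mesh the Jacobian $J_\kk$ is constant and diagonal, the Piola map reduces to coordinate-wise scaling, and $\II_{\Vhat}$ factors as a one-directional tensor product of $\II_{p-1}$, so the explicit bounded-in-$p$ constants from \Cref{lem:h1-interp-stability,lem:p-pm1-interp} propagate through the estimate to give $c_R$ independent of $p$ and $\eta$. The main obstacle is controlling $c_R$ on general non-affine meshes, where the spatial variation of the Piola transform prevents a tensor-product decomposition and the Jacobian-dependent factors must be absorbed into constants that may depend on $p$ and $\eta$ while remaining $h$-independent.
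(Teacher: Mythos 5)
Your overall strategy is the paper's: verify the two hypotheses of \Cref{lem:fictitious} in the $\iii\cdot\iii$ norm, take the right inverse $S$ built from the lifting in \Cref{lem:surjective} for $c_S$ (with $S=I$ and hence $c_S=1$ in the affine case via \Cref{lem:R-projection}), and bound $R=\Qhat\,\II_{\Vhat}$ by combining Gauss--Lobatto $H^1$-stability, the Oswald estimate, and the trace inequality, with the tensor-product structure on Cartesian meshes supplying $p$- and $\eta$-uniform constants. The $c_R$ half of your argument is sound and matches the paper in all essentials (the paper bounds $\II_{\Vhat}$ and $\Qhat$ separately rather than via one triangle inequality, and in the Cartesian case it uses explicitly that $\II_{\Vhat}$ commutes with the face jump, so that \Cref{lem:l2-interp-stable-1d} applies edgewise; your ``constants propagate through the tensor product'' is the same observation).

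The gap is in your $c_S$ bound on general (non-affine) meshes. You justify $\|\widetilde{\bm v}\|_{\widetilde A}^2\lesssim\|\bm v\|_A^2$ by ``norm equivalence on the reference element'' plus ``\Cref{lem:eigval-estimates} applied locally and summed.'' Norm equivalence only gives \emph{boundedness} of the lifting, e.g.\ $\|S\bm v\|_{0,\kk}\approx\|\bm v\|_{0,\kk}$, and if you then control the DG energy of $S\bm v$ through $L^2$ norms via \Cref{lem:eigval-estimates} you pick up a factor $\eta p^4/h^2$, which is \emph{not} $h$-independent: the jump term $\alpha\|\llb S\bm v\rrb\|_{0,\Gamma}^2$ cannot be bounded by $\iii\bm v\iii^2$ with an $h$-uniform constant from boundedness alone, since $\bm v$ itself has no jumps in its normal component but $S\bm v$ may have $O(1)$ tangential mismatches a priori. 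What is actually needed, and what the paper's ``scaling argument'' supplies, is the first-order \emph{approximation} property $\|\bm v-S\bm v\|_{0,\kk}\lesssim h\,\|\nabla_h\bm v\|_{0,\kk}$ (together with $\|\nabla_h(\bm v-S\bm v)\|_0\lesssim\|\nabla_h\bm v\|_0$), which holds because the Jacobian of a shape-regular bilinear map deviates from a constant by $O(h)$. Feeding this smallness into the trace inequality of \Cref{lem:trace} gives $\alpha\|\llb\bm v-S\bm v\rrb\|_{0,\Gamma}^2\lesssim\eta p^4\|\nabla_h\bm v\|_0^2$, which is $h$-independent, and then the triangle inequality closes the estimate. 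Your sketch never produces this $O(h)$ difference estimate, so as written the non-affine $c_S$ bound does not follow; once you add it, the rest of your argument goes through.
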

\begin{proof}
   We first show that the operator norms of $R$ and its right-inverse $S$ (defined according to the procedure of \Cref{lem:surjective}) are bounded independent of $h$.
   A scaling argument shows that for all $\vv_h \in \Vh$, $S \vv_j$ satisfies $\| \vv_h - S \vv_h \|_0^2 \leq C h^2 \| \nabla_h \vv_h \|_0^2$ and $\| \nabla_h (\vv_h - S \vv_h) \|_0^2 \leq C \| \nabla_h \vv_h \|_0^2$, where the constant $C$ is independent of $h$.
   From this, the trace inequality \Cref{lem:trace} implies that
   \begin{align*}
      \| \llb \vv_h - S \vv_h \rrb \|_{0,\Gamma}^2
         \leq \sum_{\kk \in \T} \| \vv_h - S \vv_h \|_{0,\partial\kk}^2
         \leq C \frac{p^2}{h} \sum_{\kk \in \T} \| \vv_h - S \vv_h \|_{0,\kk}^2
         \leq C p^2 h \sum_{\kk \in \T} \| \nabla_h \vv_h \|_{0,\kk}^2,
   \end{align*}
   from which we have $\iii \vv_h - S \vv_h \iii \leq C \iii \vv_h \iii$.
   Boundedness of $S$ in the $\iii\cdot\iii$ norm follows from the triangle inequality, implying that $c_S$ is bounded independent of $h$.
   If the mesh is affine, then from \Cref{lem:R-projection}, $S = I$, and $c_S = 1$.

   In order to bound $R$, we bound the operator norms of the nodal interpolant $\II_{\Vhat}$ and the Oswald operator $\Qhat$.
   By a straightforward analogue of \Cref{lem:h1-interp-stability}, it can be seen that for any $\bm w_h \in \Wh$,  $\| \nabla_h \II_{\Vhat} \bm w_h \|_0^2 \lesssim \| \nabla_h \bm w_h \|_0^2$.
   The interpolation operator further satisfies the approximation property $\| \bm w_h - \II_{\Vhat} \bm w_h \|_0^2 \leq C h^2 \| \nabla_h \bm w_h \|_0^2$.
   By the same argument as for the operator $S$, this implies that $\II_{\Vhat}$ is bounded in the DG norm $\iii \cdot \iii$ independent of $h$.

   To bound the operator $\Qhat$ in the norm $\iii \cdot \iii$ we briefly repeat an argument from \Cref{lem:stable-decomp}.
   Since the Oswald operator $\Qhat$ leaves interior degrees of freedom unchanged, $\bm v_h - \Qhat \bm v_h$ vanishes at all interior Gauss--Lobatto nodes, and hence \Cref{lem:oswald} applies.
   So, for all $\bm v_h \in \Vhat$, $\| \bm v_h - \Qhat(\bm v_h) \|_0^2 \lesssim \frac{h}{p^2} \| \llb \bm v_h \rrb \|_{0,\Gamma}^2$.
   Together with \Cref{lem:eigval-estimates} it follows that
   \begin{align*}
      \iii \bm v_h - \Qhat(\bm v_h) \iii^2
         \lesssim \eta \frac{p^4}{h^2} \| \bm v_h - \Qhat(\bm v_h) \|_0^2
         \lesssim \eta \frac{p^2}{h} \| \llb \bm v_h \rrb \|_{0,\Gamma}^2 \lesssim \iii \bm v_h \iii^2,
   \end{align*}
   and so from the triangle inequality it holds that $\iii \Qhat \bm v_h \iii^2 \lesssim \iii \bm v_h \iii^2$.
   Since $R = \Qhat \II_{\Vhat}$, the above bounds imply that $c_R$ is bounded independent of $h$.

   In the special case that the mesh is Cartesian, note that on any edge $e \in \Gamma$, the operator $\II_{\Vhat}$ acts as identity on the tangential component of $\bm w_h$, and degree-$(p-1)$ interpolation of the normal component.
   Linearity of the interpolant together with \Cref{lem:l2-interp-stable-1d} imply that
   \begin{equation}
      \| \llb \II_{\Vhat} (\bm w_h) \rrb \|_{0,e}^2
         = \| \II_{\Vhat} ( \llb (\bm w_h) \rrb ) \|_{0,e}^2
         \lesssim \| \llb \bm w_h \rrb \|_{0,e}^2
   \end{equation}
   Summing over all edges in the mesh skeleton and using the $H^1$-stability of the interpolant results in
   \[
      \iii \II_{\Vhat} \bm w_h \iii^2 \lesssim \iii \bm w_h \iii^2,
   \]
   and so in this case, $c_R$ is also bounded independent of $p$ and $\eta$.
\end{proof}

In practice, it is not feasible to construct the exact fictitious space preconditioner defined by $B = R \widetilde{A}^{-1} R^\tr$, since computing the inverse $\widetilde{A}^{-1}$ is prohibitive.
In fact, the fictitious space $\Wh$ is larger than the $\Hdiv$ space $\Vh$, and so the associated matrix $\widetilde{A}$ is larger $A$, rendering inversion via direct methods disadvantageous.
However, a large number of effective preconditioners for $\widetilde{A}$ have been described in the literature (see e.g.~\cite{Brix2014,Antonietti2016,Pazner2020a,Pazner2021b,Pazner2023});
among these are preconditioners with optimal complexity, with resulting condition number independent of discretization parameters.
Then, $\widetilde{A}^{-1}$ can be replaced by a spectrally equivalent preconditioner $\widetilde{B}$, and $B' = R \widetilde{B} R^\tr$ is used as a preconditioner for $A$.

\subsubsection{Matrix-free fictitious space preconditioning}
\label{sec:matrix-free}

For high-order discretizations, the memory requirements and computational cost associated with assembling the system matrix may be prohibitive;
the number of nonzeros in the matrix scales like $\mathcal{O}(p^{2d})$, and the number of operations required to assemble the matrix using traditional algorithms scales like $\mathcal{O}(p^{3d})$ (although sum factorization techniques can reduce the assembly cost to $\mathcal{O}(p^{2d+1})$, see e.g.~\cite{Melenk2001}).
However, the action of the operator can be computed with $\mathcal{O}(p^{d+1})$ operations and optimal $\mathcal{O}(p^d)$ memory requirements.
For this reason, matrix-free preconditioners, the action of which can be computed with access to the entries of the original matrix, are advantageous in the context of high-order discretizations.

The fictitious space preconditioner described in \Cref{sec:fictitious} lends itself to matrix-free implementation.
The action of the preconditioner $B' = R \widetilde{B} R^\tr$ given is computed as a sequence of three operator applications.
We show that the action of each of $R$, $\widetilde{B}$, and $R^\tr$ can be computed efficiently matrix-free.

The operator $R = \Qhat \II_{\Vhat}$ and its transpose can be computed without assembling their matrix representations.
The interpolation operator $\II_{\Vhat}$ is local to each element (block-diagonal).
Each block $\II_{\Vhat(\kk)}$ takes the form
\[
   \II_{\Vhat(\kk)} =
   \begin{pmatrix}
      J_{p} \otimes J_{p-1} & 0 \\
      0 & J_{p-1} \otimes J_p
   \end{pmatrix}
   \operatorname{diag} \left(
      \hat{\bm n}_i^\tr \det(J(\bm x_i)) J^{-1}(\bm x_i)
   \right).
\]
The action of the interpolation matrices $J_{p} \otimes J_{p-1}$ and $J_{p-1} \otimes J_p$ can be computed with explicitly forming the Kronecker product using standard sum factorization algorithms (cf.~\cite{Orszag1980}).
The Oswald operator $\Qhat : \Vhat \to \Vh$ and its transpose can be expressed as weighted gather and scatter operations.

It remains to express the action of $\widetilde{B}$ in a matrix-free way, without access to the assembled DG system matrix $\widetilde{A}$.
There are a number of strategies to achieve this;
presently, we take the approach of low-order-refined spectral equivalence using the method described in \cite{Pazner2023}.
In this method, we consider the lowest-order DG space $\bm W_0$ consisting of piecewise constant functions on a refined mesh $\T_0$, obtained by subdividing each element $\kk \in \T$ into $(p+1)^2$ subelements whose vertices are given by the Cartesian product of $p+2$ Gauss--Lobatto points.
Since the functions $\bm w_h \in \bm W_0$ are piecewise constant, the interior penalty bilinear form reduces simply to the penalty term,
\[
   a_0(\bm v_h, \bm w_h) = \langle \alpha_0 \llb \bm v_h \rrb, \llb \bm w_h \rrb  \rangle.
\]
Choosing $\alpha_0$ in terms of the Gauss--Lobatto quadrature points and weights as in \cite[Theorem 4.6]{Pazner2023} ensures that the resulting low-order-refined matrix $A_0$ is spectrally equivalent to the high-order DG discretization $\widetilde{A}$, independent of $h$, $p$, and penalty parameter $\eta$.
Unlike the high-order matrix $\widetilde{A}$, for which the number of nonzeros per row scales as $\mathcal{O}(p^d)$, the number of nonzeros per row of $A_0$ is bounded independent of $p$; the maximum number of nonzeros per row in $A_0$ is $2d + 1$.
Thus, the number of operations and storage required to construct $A_0$ is optimal ($\mathcal{O}(1)$ per degree of freedom).
Furthermore, $A_0$ possesses favorable properties that make it amenable to preconditioning using algebraic multigrid methods:
$A_0$ is an M-matrix and can be written as a weighted graph Laplacian; theory for algebraic multigrid convergence is well-developed in these cases \cite{Ruge1987}.
Additionally, it can be shown that classical AMG applied to $A_0$ will converge independent of the penalty parameter $\eta$ \cite{Pazner2023}.

\subsection{Auxiliary space preconditioning}
\label{sec:auxiliary}

We further develop the fictitious space preconditioner of the preceding section to construct an auxiliary space preconditioner (cf.~Xu, \cite{Xu1996}), based on the application of \Cref{lem:fictitious} to product spaces.
In this case, an auxiliary space $\VV_0$ with symmetric positive-definite operator $A_0 : \VV_0 \to \VV_0$ and transfer operator $\Pi : \VV_0 \to \Vh$ is used in conjunction with a smoother $D : \Vh \to \Vh$ to construct a preconditioner $B = D^{-1} + \Pi A_0^{-1} \Pi^\tr$.
The following result bounds the condition number of $BA$.

\begin{lem}[{\cite[Theorem 2.1]{Xu1996}}]
   Consider a space $\VV_0$ with symmetric positive-definite operator $A_0 : \VV_0 \to \VV_0$, transfer operator $\Pi : \VV_0 \to \Vh$, and symmetric positive-definite smoother $D : \Vh \to \Vh$ such that
   \[
      \begin{aligned}
         \| \Pi \vv_0 \|_A &\leq c_{\Pi} \| \vv_0 \|_{A_0} \qquad&&\text{for all $\vv_0 \in \VV_0$}, \\
         \| \vv \|_A &\leq c_D \| \vv \|_D \qquad&&\text{for all $\vv \in \Vh$},
      \end{aligned}
   \]
   and, suppose that for all $\vv \in \Vh$, there exists $\vv_0 \in \VV_0$ such that
   \[
      \| \vv_0 \|_{A_0}^2 + \| \vv - \Pi \vv_0 \|_D^2 \leq c_0^2 \| \vv \|_A^2.
   \]
   Then, the preconditioner $B = D^{-1} + \Pi A_0^{-1} \Pi^\tr$ satisfies
   \[
      \kappa(BA) \leq c_0^2 (c_D^2 + c_\Pi^2).
   \]
\end{lem}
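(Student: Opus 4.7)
The plan is to reduce this result to the fictitious space lemma (\Cref{lem:fictitious}) by taking the product space $\widetilde{\VV} = \Vh \times \VV_0$ as the fictitious space. I would equip it with the block-diagonal operator $\widetilde{A} = \operatorname{diag}(D, A_0)$, so that $\| (\vv, \vv_0) \|_{\widetilde{A}}^2 = \| \vv \|_D^2 + \| \vv_0 \|_{A_0}^2$, and define the surjective transfer operator $R : \widetilde{\VV} \to \Vh$ by $R(\vv, \vv_0) = \vv + \Pi \vv_0$. A direct computation using $R^\tr \uu = (\uu, \Pi^\tr \uu)$ and the block structure of $\widetilde{A}^{-1}$ shows that $R \widetilde{A}^{-1} R^\tr = D^{-1} + \Pi A_0^{-1} \Pi^\tr = B$, so it suffices to bound the two constants in \Cref{lem:fictitious}.

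For the boundedness constant $c_R$, given $(\vv, \vv_0) \in \widetilde{\VV}$, I would apply the triangle inequality in $\|\cdot\|_A$ together with the two continuity hypotheses to obtain
\[
   \| R(\vv, \vv_0) \|_A
   \leq \| \vv \|_A + \| \Pi \vv_0 \|_A
   \leq c_D \| \vv \|_D + c_\Pi \| \vv_0 \|_{A_0},
\]
and then invoke the discrete Cauchy--Schwarz inequality to conclude $\| R(\vv, \vv_0) \|_A^2 \leq (c_D^2 + c_\Pi^2)(\| \vv \|_D^2 + \| \vv_0 \|_{A_0}^2) = (c_D^2 + c_\Pi^2) \| (\vv, \vv_0) \|_{\widetilde{A}}^2$, giving $c_R \leq c_D^2 + c_\Pi^2$.

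For the stable-decomposition constant $c_S$, the third hypothesis is essentially tailor-made: given $\uu \in \Vh$, choose $\vv_0 \in \VV_0$ satisfying $\| \vv_0 \|_{A_0}^2 + \| \uu - \Pi \vv_0 \|_D^2 \leq c_0^2 \| \uu \|_A^2$, and set $\vv = \uu - \Pi \vv_0$. Then $R(\vv, \vv_0) = \uu$ and by construction $\| (\vv, \vv_0) \|_{\widetilde{A}}^2 = \| \uu - \Pi \vv_0 \|_D^2 + \| \vv_0 \|_{A_0}^2 \leq c_0^2 \| \uu \|_A^2$, yielding $c_S \leq c_0^2$. Applying \Cref{lem:fictitious} then gives $\kappa(BA) \leq c_R c_S \leq c_0^2 (c_D^2 + c_\Pi^2)$.

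There is no serious obstacle here; the argument is a routine product-space reformulation, and the only minor care required is the Cauchy--Schwarz step that produces the additive combination $c_D^2 + c_\Pi^2$ rather than a multiplicative one. The symmetry and positive-definiteness of $\widetilde{A}$ follow immediately from those of $D$ and $A_0$, and surjectivity of $R$ is trivial since $R(\vv, 0) = \vv$ for any $\vv \in \Vh$.
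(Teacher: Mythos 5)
Your proposal is correct and follows exactly the route the paper indicates: the paper proves this lemma only by the one-sentence remark that it follows from applying the fictitious space lemma to the product space $\VV_0 \times \Vh$, and your argument is a complete and accurate elaboration of that reduction, including the verification that $R\widetilde{A}^{-1}R^\tr = B$ and the Cauchy--Schwarz step yielding $c_R \leq c_D^2 + c_\Pi^2$ and $c_S \leq c_0^2$. No gaps.
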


This theorem can be proven by applying \Cref{lem:fictitious} with the fictitious space given by the product space $\widehat{\VV} = \VV_0 \times \Vh$.
Note that in the context of auxiliary space methods, the transfer operator $\Pi$ is not required to be surjective.

In this setting, we choose the auxiliary space to be the degree-$(p-1)$ DG space,
\[
   \WW_0 = \{ \bm w_h \in [L^2(\Omega)]^d : \bm w_h|_\kk \circ T_\kk \in \QQ_{p-1}^d \}.
\]
The operator $A_0$ is defined by the IPDG bilinear form \eqref{eq:ipdg} restricted to the space $\WW_0$ (note that $\WW_0 \subseteq \Wh$, where $\Wh$ is the fictitious space from the preceding section).
The transfer operator $\Pi$ is simply the restriction of $R$ defined by \eqref{eq:transfer} to the smaller space $\WW_0 \subseteq \Wh$.
The smoother $D$ is defined to be the block Jacobi (additive Schwarz) solver where each block consist of all nodes lying on a given geometric entity (vertex, edge interior, or element interior).

\begin{thm}
   \label{thm:auxiliary}
   Given the above definitions, the constants $c_{\Pi}$ and $c_D$ are independent of $h$, $\eta$, and $p$, and the constant $c_0$ is independent of $h$.
   The condition number $\kappa(BA)$ is independent of $h$.
\end{thm}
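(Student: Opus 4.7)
The plan is to verify, in turn, the three hypotheses of the cited auxiliary space theorem of Xu, leveraging the analysis already carried out in \Cref{thm:fictitious} for the fictitious space preconditioner.

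For the bound on $c_\Pi$, I would observe that $\Pi$ is simply the restriction of the transfer operator $R = \Qhat \II_{\Vhat}$ to the subspace $\WW_0 \subseteq \Wh$, so the estimate $\iii \Pi \bm w_0 \iii \lesssim \iii \bm w_0 \iii$ follows immediately from the proof of \Cref{thm:fictitious}. The key observation allowing $c_\Pi$ to be made independent of $p$ and $\eta$ even on general non-Cartesian meshes is that functions in $\WW_0$ have component degree at most $p-1$ in each reference variable, so they lie in the reference Raviart--Thomas space $\VV(\khat) = \QQ_{p,p-1} \times \QQ_{p-1,p}$; hence $\II_{\Vhat}$ acts as the identity on $\WW_0$ in the affine case, and differs from the identity only through variations of the Piola transformation in the general bilinear case. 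Boundedness of $\Qhat$ in the DG norm was already established in the proof of \Cref{thm:fictitious}.

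For the smoother bound $c_D$, I would show that the block Jacobi preconditioner with blocks corresponding to nodes sharing a common geometric entity satisfies $\|\bm v\|_A^2 \lesssim \|\bm v\|_D^2$ by a standard coloring argument. Writing $\bm v = \sum_e \bm v_e$ according to the entity decomposition and noting that $a(\bm v_e, \bm v_{e'}) \neq 0$ only when $e$ and $e'$ are incident to a common element (since the IPDG form couples only neighbouring entities), Cauchy--Schwarz and Young's inequality, exactly as in \Cref{lem:finite-overlap}, yield the bound with a constant depending only on the mesh valence.

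The technical heart of the argument is the stable decomposition for $c_0$. Given $\bm v \in \Vh$, I would define $\bm v_0 \in \WW_0$ elementwise by pulling $\bm v$ back through the Piola transformation, applying the tensor product Gauss--Lobatto interpolant $\II_{p-1}^\oo d$ to the result, and pushing forward. The bound $\|\bm v_0\|_{A_0}^2 \lesssim \iii \bm v \iii^2$ then follows from \Cref{lem:h1-interp-stability} together with stability of the Piola factors, while the residual $\bm v - \Pi \bm v_0$ consists essentially of the Gauss--Lobatto interpolation error on each element; by \Cref{lem:p-pm1-interp} combined with the trace inverse inequality of \Cref{lem:trace}, its $D$-norm can be controlled by $\iii \bm v \iii^2$ with a constant uniform in $h$. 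I expect the main obstacle to be precisely this step, in particular aligning the pointwise interpolation error with the geometric-entity block structure of $D$ and handling the discrepancy introduced by the Oswald averaging operator $\Qhat$ on non-affine meshes; nevertheless, $h$-independence of $c_0$, and hence of $\kappa(BA)$, is secured throughout because the constants furnished by \Cref{lem:eigval-estimates}, \Cref{lem:trace}, \Cref{lem:oswald}, \Cref{lem:h1-interp-stability}, and \Cref{lem:p-pm1-interp} are all uniform in $h$.
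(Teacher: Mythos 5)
Your proposal follows essentially the same route as the paper: verify the three hypotheses of Xu's auxiliary space lemma, reuse the bound on $\Qhat$ from the proof of \Cref{thm:fictitious} for $c_\Pi$, use a finite-overlap/coloring argument for $c_D$, and build the stable decomposition from a degree-$(p-1)$ Gauss--Lobatto interpolant. For $c_0$ the paper bounds $\| \vv - \vv_0 \|_D^2$ by combining the interpolation error $\| \vv - \vv_0 \|_0^2 \lesssim \tfrac{h^2}{p^2} \| \nabla_h \vv \|_0^2$ with the crude eigenvalue estimates of \Cref{lem:eigval-estimates} (rather than your trace-inequality route), arriving at $c_0^2 \lesssim \eta p^2$, which is all that is claimed ($h$-independence only); your plan reaches the same place. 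Two small remarks on your construction of $\vv_0$: the paper simply takes $\vv_0 = \II_{\WW_0}(\vv)$, and note that pulling back and pushing forward through the \emph{Piola} map would land you in $\det(J)^{-1}J\,\QQ_{p-1}^d$ rather than in $\WW_0$ on non-affine elements; the plain composition pullback is the right one here.

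The one point that needs tightening is $c_\Pi$. The theorem asserts that $c_\Pi$ is independent of $p$ and $\eta$ on \emph{general} meshes, whereas the proof of \Cref{thm:fictitious} establishes $p$- and $\eta$-independence of the interpolant $\II_{\Vhat}$ only on Cartesian meshes (on general meshes it yields only $h$-independence). So ``$\Pi$ is the restriction of $R$, hence the bound follows immediately'' does not deliver the full claim. The needed observation is that $\QQ_{p-1}^d \subseteq \VV(\kk)$ for \emph{every} straight-sided (bilinear) element, not just affine ones: exactly as in \Cref{lem:bilinear-subspace}, the adjugate of a bilinear Jacobian has first row affine in $x$ and second row affine in $y$, so $\operatorname{adj}(J)\,\QQ_{p-1}^2 \subseteq \QQ_{p,p-1} \times \QQ_{p-1,p}$. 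Hence $\WW_0 \subseteq \Vhat$, the interpolant $\II_{\Vhat}$ is the identity on $\WW_0$, and $\Pi = \Qhat$ exactly; the fully parameter-independent bound on $\Qhat$ from \Cref{thm:fictitious} then gives $c_\Pi$ independent of $h$, $p$, and $\eta$. Your statement that $\II_{\Vhat}$ ``differs from the identity only through variations of the Piola transformation in the general bilinear case'' is both incorrect (it is still the identity) and, as written, leaves the claimed $p,\eta$-independence of $c_\Pi$ unproved. With that correction, the rest of your argument matches the paper.
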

\begin{proof}
   Since $\QQ_{p-1}^d \subseteq \VV(\kk)$ for all $\kk$, $\Pi = \Qhat$ where $\Qhat$ is the Oswald operator defined by \eqref{eq:oswald-fictitious}.
   The proof of \Cref{thm:fictitious} gives a bound on the operator norm of $\Qhat$, implying that the constant $c_\Pi$ is independent of $h$, $p$, and $\eta$.

   The constant $c_D$ is uniformly bounded because of the finite overlap property;
   the $j$th block of the block Jacobi smoother $D$ corresponds to a subspace $\VV_j$ spanned by the basis functions belonging to a geometric entity (vertex, edge interior, or element interior).
   Given $\vv_i \in \VV_i$ and $\vv_j \in \VV_j$, then $a(\vv_i, \vv_j)$ is nonzero only when the associated geometric entities belong to the same element.
   The number of overlapping subspaces is bounded by a constant times the maximum vertex valence of the mesh.
   Note that $\Vh = \VV_1 \oplus \VV_2 \oplus \cdots \oplus \VV_M$, and so for any $\vv \in \Vh$, we can write $\vv = \vv_1 + \vv_2 + \cdots + \vv_M$ with $\vv_i \in \VV_i$, from which we have
   \begin{align*}
      \| \vv \|_A^2
         = a(\vv, \vv)
         = a\left(\sum_{i=1}^M \vv_i, \sum_{i=1}^M \vv_i\right)
         = \sum_{i,j=1}^M a(\vv_i, \vv_j)
         \lesssim \sum_{i}^M a(\vv_i, \vv_i)
         = \| \vv \|_D^2,
   \end{align*}
   using Young's inequality and the finite overlap property.

   To bound $c_0$, set $\vv_0 = \II_{\WW_0}(\vv)$.
   Stability of the interpolant $\II_{\WW_0}$ implies that $\| v_0 \|_{A_0}^2 \lesssim \| \vv \|_A^2$, independent of $h$, $p$, and $eta$.
   Note that $\| \vv - \vv_0 \|_0^2 \lesssim \frac{h^2}{p^2} \| \nabla_h \vv \|_0^2$ by \Cref{lem:p-pm1-interp} and a scaling argument.
   Let $\vv_D = \vv - \vv_0$ and expand $\vv_D = \sum_{i=1}^M \vv_i$.
   Then, by the $L^2$ norm estimates of \Cref{lem:eigval-estimates},
   \begin{align*}
      \| \vv_D \|_D^2
         &= \sum_{i=1}^M \| \vv_i \|_A^2
         \lesssim \eta \frac{p^4}{h^2} \sum_{i=1}^M \| \vv_i \|_0^2
         \lesssim \eta \frac{p^4}{h^2} \| \vv_D \|_0^2
         \lesssim \eta p^2 \| \vv \|_0^2
         \lesssim \eta p^2 \| \vv \|_A^2,
   \end{align*}
   proving that $c_0$ is independent of $h$.
\end{proof}

\begin{rem}
   Although the proof of \Cref{thm:auxiliary} results in the bound $c_0 \lesssim \eta p^2$,
   numerical results indicate that this bound is pessimistic, and that $c_0$ scales only mildly with $p$.
   This is similar to the case of the subspace correction preconditioner as discussed in \Cref{rem:subspace-p-scaling}.
   Heuristically, performance similar to $p$-multigrid with Jacobi or block-Jacobi smoothing can be expected (see, for example, \cite{Sundar2015}).
   On affine meshes, the constant $c_0$ is also expected to be independent of $\eta$.
   The performance of the auxiliary space preconditioner $B$ is studied numerically in \Cref{sec:results}.
\end{rem}

\subsection{Application to the Stokes system}
\label{sec:stokes}

Consider the Stokes problem
\begin{equation}
   \label{eq:stokes}
   \left\{~
   \begin{aligned}
      -\Delta \bm u + \nabla p &= f \quad&& \text{in $\Omega$,}\\
      \nabla \cdot \bm u &= 0 \quad&& \text{in $\Omega$,}\\
      \bm u &= \bm u_D \quad&& \text{on $\partial\Omega$,}
   \end{aligned}\right.
\end{equation}
where $\bm u$ is the velocity field and $p$ is the pressure.
This problem can be discretized using the $\Hdiv$-conforming discontinuous Galerkin method as follows:
find $(\bm u_h, p_h) \in \Vh \times W_h$ such that
\begin{align*}
   a(\bm u_h, \bm v_h) + b(\bm w_h, p_h) &= (f, \bm w_h), \\
   b(\bm u_h, q_h) &= 0,
\end{align*}
for all $(\bm v_h, q_h) \in \Vh \times W_h$, where the pressure space $W_h$ is the discontinuous Galerkin space of degree $p$, $a(\cdot\,,\cdot)$ is the interior penalty bilinear form defined by \eqref{eq:ipdg}, and $b(\bm u_h, q_h) = -(\nabla\cdot\bm u_h, q_h)$.
This method was introduced by \citeauthor{Cockburn2006} in \cite{Cockburn2006}.
This discretization results in exactly divergence-free velocity fields, and as such possesses favorable features such as pressure robustness \cite{John2017}.

The linear system associated with the discretization \eqref{eq:stokes} takes the form of the saddle-point system
\begin{equation}
   \label{eq:saddle-point}
   \underbrace{
   \begin{pmatrix}
      A & D^\tr \\
      D & 0
   \end{pmatrix}}_{\mathcal{A}}
   \begin{pmatrix} \bm{\mathsf{u}} \\ \mathsf{p} \end{pmatrix}
   = \begin{pmatrix} \bm{\mathsf{f}} \\ \mathsf{0} \end{pmatrix}.
\end{equation}
We presently consider the standard approach of block-diagonal preconditioning for this system \cite{Benzi2005}.
The ideal block-diagonal preconditioner is given by
\begin{equation}
   \label{eq:block-diag}
   \mathcal{B} = \begin{pmatrix}
      A^{-1} & 0 \\
      0 & S^{-1}
   \end{pmatrix},
\end{equation}
where $S = D A^{-1} D^\tr$ is the (negative) pressure Schur complement.
The diagonal blocks in \eqref{eq:block-diag} are replaced with spectrally equivalent approximations;
$A^{-1}$ can be replaced with any of the proposed preconditioners from the preceding sections.
The Schur complement $S$ is spectrally equivalent to the DG mass matrix $M$, and so $S^{-1}$ may be replaced either with $M^{-1}$ or, for example, $\widetilde{M}^{-1}$, where $\widetilde{M} = \operatorname{diag}(M)$;
the DG mass matrix using the Gauss--Lobatto nodal basis is spectrally equivalent to its diagonal, independent of $h$ and $p$ \cite{Teukolsky2015}.
By replacing the blocks with approximations, we obtain a preconditioner $\widetilde{\mathcal{B}}$.

The preconditioned Stokes system $\mathcal{B} \mathcal{A}$ is uniformly well-conditioned independent of discretization parameters.
However, the constants of approximation in the spectrally equivalent Schur complement approximation $S \approx M$ depend on the inf--sup constant, which in turn depends on the DG penalty parameter $\eta$.
So, even in the case where $A^{-1}$ can be approximated independent of $\eta$, the resulting block-diagonal preconditioned system will have $\eta$-dependent condition number.
This inf--sup dependence can be avoided using divergence-free multigrid methods as in \cite{Kanschat2015}; this approach is not pursued in the present work.

\section{Gauss--Lobatto interpolation estimates}
\label{sec:technical}

In this section, we provide some results on polynomial interpolation at the Gauss--Lobatto points;
these are required to complete the deferred proofs for the technical lemmas from \Cref{sec:lobatto}.
Many of these results are related to those of \Citeauthor{Bernardi1992} \cite{Bernardi1992}.
We will use properties of the orthogonal Legendre polynomials $P_n$ satisfying
\begin{equation}
   \label{eq:legendre-l2}
   \int_{-1}^1 P_m(x) P_n(x) \, dx = \frac{2}{2n + 1} \delta_{mn}.
\end{equation}
The $n$ Gauss--Lobatto nodes $x_i$ are given as the zeros of the degree-$n$ polynomial
\begin{equation}
   \label{eq:lobatto-chi}
   \chi_n(x) = P_{n-1}'(x) (x^2 - 1).
\end{equation}
In \cite[Section 6]{Bernardi1992}, $L^2$ stability of degree-$(p-1)$ Gauss--Lobatto interpolation of polynomials of degree $p$ was shown;
the following lemma provides explicit constants for the stability bounds.
\begin{lem}
   \label{lem:l2-interp-stable-1d}
   Let $\II_{p-1} : C^0 \to \QQ_{p-1}$ denote nodal interpolation at the $p$ Gauss--Lobatto nodes.
   For $u \in \QQ_p$, it holds that
   \[
      \| \II_{p-1} u \|_0^2 \leq \left( 2 + \frac{4}{2p-3} \right) \| u \|_0^2
   \]
   and this estimate is sharp.
\end{lem}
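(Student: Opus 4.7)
The natural plan is to reduce the problem to a scalar optimization in Legendre coordinates. Any $u \in \QQ_p$ can be split uniquely as $u = v + c\,\chi_p$ where $v \in \QQ_{p-1}$ and $\chi_p$ is the nodal polynomial from \eqref{eq:lobatto-chi}; a dimension count confirms this decomposition spans $\QQ_p$. Since $\chi_p$ vanishes at all $p$ Gauss--Lobatto nodes and $\II_{p-1}$ reproduces $\QQ_{p-1}$, we immediately have $\II_{p-1} u = v$, so the problem becomes bounding $\|v\|_0^2/\|v + c\chi_p\|_0^2$ over all $v \in \QQ_{p-1}$ and $c \in \mathbb{R}$.

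The key computation is to rewrite $\chi_p$ in the Legendre basis so as to exploit orthogonality \eqref{eq:legendre-l2}. Using the classical identity $(x^2-1)P_{n}'(x) = \frac{n(n+1)}{2n+1}\bigl(P_{n+1}(x) - P_{n-1}(x)\bigr)$ with $n = p-1$, I would derive
\[
\chi_p \;=\; \frac{(p-1)p}{2p-1}\bigl(P_p - P_{p-2}\bigr).
\]
Expanding $v = \sum_{k=0}^{p-1} a_k P_k$, the only Legendre mode of $v$ that couples to $\chi_p$ in the $L^2$ inner product is the $P_{p-2}$ mode. So both $\|v\|_0^2$ and $\|v + c\chi_p\|_0^2$ split as a sum of terms indexed by $k$, with only the $k = p-2$ term interacting with $c$.

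Consequently, to maximize the ratio one may set $a_k = 0$ for $k \neq p-2$, reducing everything to a one-parameter minimization over $c$ with $a_{p-2}$ and a single quadratic in $c$. Writing $\beta = 2/(2p-3)$ and $\gamma = 2/(2p+1)$ for the squared Legendre norms of $P_{p-2}$ and $P_p$, the minimum of $\|v + c\chi_p\|_0^2$ at fixed $a_{p-2}^2 \frac{2}{2p-3} = \|v\|_0^2$ turns out (after completing the square) to be $\beta\gamma/(\beta+\gamma) \cdot a_{p-2}^2$, giving the sharp ratio
\[
\frac{\|\II_{p-1} u\|_0^2}{\|u\|_0^2} \;\leq\; \frac{\beta + \gamma}{\gamma} \;=\; 1 + \frac{2p+1}{2p-3} \cdot 0 + \frac{2p+1-\gamma\text{-contribution}}{\ldots},
\]
which simplifies to $2 + \frac{4}{2p-3}$. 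Sharpness follows by taking $v = P_{p-2}$ and $c$ equal to the minimizer of the quadratic.

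The main obstacle is bookkeeping rather than anything deep: one must correctly identify the Legendre expansion of $\chi_p$ (the ratio hinges on the exact $(p-1)p/(2p-1)$ prefactor) and then cleanly execute the one-variable minimization so that the ugly-looking constants telescope to $2 + 4/(2p-3)$. The proof is otherwise elementary once the decomposition $u = v + c\chi_p$ is observed.
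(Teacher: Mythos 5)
Your proposal is correct and follows essentially the same route as the paper: decompose $u = v + c\chi_p$ with $\II_{p-1}u = v$, expand $\chi_p = \frac{p^2-p}{2p-1}(P_p - P_{p-2})$ via Bonnet's recursion, observe that only the $P_{p-2}$ mode of $v$ couples to $\chi_p$, and optimize; the paper finishes with a sharp Cauchy--Schwarz/Young step while you carry out the exact one-parameter minimization, which is equivalent (and makes sharpness slightly more transparent), and your value $\frac{\beta+\gamma}{\gamma} = 1 + \frac{2p+1}{2p-3} = 2 + \frac{4}{2p-3}$ is correct. The only blemish is the garbled middle of your final display (the ``$\cdot\, 0$'' and ``$\gamma$-contribution'' fragments), which should simply read $\frac{\beta+\gamma}{\gamma} = 1 + \frac{\beta}{\gamma} = 2 + \frac{4}{2p-3}$.
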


\begin{proof}
   Let $\chi = \chi_p$ denote the degree-$p$ polynomial that vanishes at the $p$ Gauss--Lobatto points.
   Since $\QQ_p = \operatorname{span} \{ \QQ_{p-1}, \chi \}$, we can write any $u \in \QQ_p$ as $u = u_{p-1} + c \chi$, for some coefficient $c$, where $\deg(u_{p-1}) = p - 1$, and $\chi$ is given by \eqref{eq:lobatto-chi}.
   Since $\chi$ vanishes at the Gauss--Lobatto points, $\II_{p-1} u = u_{p-1}$, and $\| \II_{p-1} u \|_0 = \| u_{p-1} \|_0$.
   Furthermore,
   \begin{equation}
      \label{eq:u-chi}
      \| u \|_0^2
         = \int_{-1}^1 (u_{p-1} + c \chi)^2 \, dx
         = \| u_{p-1} \|_0^2 + \| c \chi \|_0^2 + 2 c \int_{-1}^1 u_{p-1} \xi \, dx.
   \end{equation}
   We bound the third term on the right-hand side.
   Bonnet's recursion formulas
   \begin{align*}
      P_{p-1}'(x) (x^2 - 1) &= (p-1) \left( x P_{p-1}(x) - P_{p-2}(x) \right), \\
      (2p - 1) x P_{p-1}(x) &= p P_p(x) + (p-1) P_{p-2}(x),
   \end{align*}
   together result in
   \begin{equation}
      \label{eq:chi}
      \chi(x) = \frac{p^2 - p}{2p-1} \left( P_p(x) - P_{p-2}(x) \right),
   \end{equation}
   which then gives by \eqref{eq:legendre-l2}
   \begin{equation}
      \label{eq:chi-norm}
      \| \chi \|_0^2 = \left(
         \frac{p^2 - p}{2p - 1}
      \right)^2 \left(
         \frac{2}{2p+1} + \frac{2}{2p-3}
      \right).
   \end{equation}
   Expanding $u_{p-1}$ in terms of the Legendre polynomials
   \begin{equation}
      \label{eq:leg-expansion}
      u_{p-1}(x) = \sum_{i=0}^{p-1} a_i P_i(x).
   \end{equation}
   for some coefficients $a_i$ results in
   \begin{equation}
      \label{eq:upm1-norm}
      \| u_{p-1} \|_0^2 = \sum_{i=0}^{p-1} a_i^2 \frac{2}{2i - 1}.
   \end{equation}
   Using \eqref{eq:chi} and \eqref{eq:leg-expansion}, we compute
   \begin{align*}
      \int_{-1}^1 u_{p-1} \chi \, dx
         &= - \frac{p^2 - p}{2p-1} \int_{-1}^1 a_{p-2} P_{p-2}(x)^2 \, dx \\
         &= - a_{p-2} \frac{p^2 - p}{2p-1} \frac{2}{2p - 3} \\
         &\leq \gamma \| u_{p-1} \|_0 \| \chi \|_0
   \end{align*}
   by comparing with \eqref{eq:chi-norm} and \eqref{eq:upm1-norm}, where
   \[
      \gamma = \sqrt{\frac{2}{2p-3}} \Bigg / \sqrt{\frac{2}{2p+1} + \frac{2}{2p-3}}.
   \]
   From \eqref{eq:u-chi} it then holds
   \begin{align*}
      \| u \|_0^2
      &\geq \| u_{p-1} \|_0^2 + \| c \chi \|_0^2 - 2 \left| c \int_{-1}^1 u_{p-1} \chi \, dx \right| \\
      &\geq \| u_{p-1} \|_0^2 + \| c \chi \|_0^2 - 2 |c| \gamma \|u_{p-1} \|_0 \| \chi \|_0 \\
      &\geq (1 - \gamma^2) \| u_{p-1} \|_0^2,
   \end{align*}
   where the last step follows from Young's inequality.
   This implies
   \[
      \| J_{p-1} u \|_0^2 \leq \frac{1}{1 - \gamma^2} \| u \|_0^2,
   \]
   and the conclusion follows.
   Sharpness of the inequality can be seen by choosing $u$ to be the properly scaled linear combination of $P_{p-2}$ and $\chi$.
\end{proof}

The following expression for the $H^1$ seminorm of $\chi$ will also be useful
\begin{lem}
   \label{lem:chi-h1-seminorm}
   For $\chi = \chi_p$ defined by \eqref{eq:lobatto-chi}, it holds that
   \[
      | \chi |_1^2 = \frac{2(p^2-p)^2}{2p - 1}.
   \]
\end{lem}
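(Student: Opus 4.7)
The plan is to exploit the closed-form expression for $\chi_p$ already derived in the proof of \Cref{lem:l2-interp-stable-1d} (equation \eqref{eq:chi}), differentiate once, and then apply a standard Legendre identity together with the orthogonality relation \eqref{eq:legendre-l2}.

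First, I would recall from \eqref{eq:chi} that
\[
   \chi(x) = \frac{p^2 - p}{2p - 1} \bigl( P_p(x) - P_{p-2}(x) \bigr).
\]
Differentiating gives
\[
   \chi'(x) = \frac{p^2 - p}{2p - 1} \bigl( P_p'(x) - P_{p-2}'(x) \bigr).
\]

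Second, I would invoke the classical Legendre identity $(2n+1) P_n(x) = P_{n+1}'(x) - P_{n-1}'(x)$ (which follows by differentiating Bonnet's recursion $(2p-1) x P_{p-1} = p P_p + (p-1) P_{p-2}$ already cited in the proof of \Cref{lem:l2-interp-stable-1d} and combining it with the other Bonnet relation used there). Applied with $n = p - 1$, this yields
\[
   P_p'(x) - P_{p-2}'(x) = (2p - 1) P_{p-1}(x),
\]
so that $\chi'(x) = (p^2 - p) P_{p-1}(x)$.

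Finally, the orthogonality relation \eqref{eq:legendre-l2} with $m = n = p-1$ gives $\|P_{p-1}\|_0^2 = 2/(2p-1)$, and hence
\[
   |\chi|_1^2 = (p^2 - p)^2 \int_{-1}^1 P_{p-1}(x)^2 \, dx = \frac{2(p^2 - p)^2}{2p - 1},
\]
as claimed. There is no real obstacle here — the only nontrivial ingredient is the Legendre derivative identity, and that is an immediate consequence of the Bonnet recursions already used a few lines earlier in the manuscript.
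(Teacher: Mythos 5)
Your proposal is correct and matches the paper's own proof essentially line for line: both start from the expansion $\chi = \tfrac{p^2-p}{2p-1}(P_p - P_{p-2})$, apply the identity $(2n+1)P_n = \tfrac{d}{dx}(P_{n+1}-P_{n-1})$ to get $\chi' = (p^2-p)P_{p-1}$, and finish with the Legendre orthogonality relation \eqref{eq:legendre-l2}. No issues.
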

\begin{proof}
   Recalling from the proof of \Cref{lem:l2-interp-stable-1d}, we have
   \[
      \chi(x) = \frac{p^2 - p}{2p-1} \left( P_p(x) - P_{p-2}(x) \right),
   \]
   which, combined with the well-known identity
   \[
      (2n + 1) P_n(x) = \frac{d}{dx} \left( P_{n+1}(x) - P_{n-1}(x) \right)
   \]
   gives
   \begin{equation}
      \label{eq:chi-derivative}
      \chi'(x) = (p^2 - p) P_{p-1}(x).
   \end{equation}
   The result then follows from \eqref{eq:legendre-l2}.
\end{proof}

Note also that the operator $\II_{p-1} : H^1 \to \QQ_{p-1}$ is stable with respect to the $H^1$ seminorm, i.e.\ $| \II_{p-1} u |_1 \lesssim | u |_1$ for all $u \in H^1$, with constant independent of $p$ (see \cite[Corollary 4.6]{Bernardi1992}).
Restricting the operator to $\QQ_p$, the implied constant in the inequality is exactly 1.

\begin{lem}
   \label{lem:h1-interp-stable-1d}
   For all $u \in \QQ_p$, it holds that $| \II_{p-1} u |_1 \leq | u |_1$.
\end{lem}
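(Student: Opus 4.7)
The plan is to reuse the decomposition from the proof of \Cref{lem:l2-interp-stable-1d}: write $u = u_{p-1} + c\chi$ with $u_{p-1} \in \QQ_{p-1}$ and $\chi = \chi_p$, so that $\II_{p-1} u = u_{p-1}$. Expanding the $H^1$ seminorm gives
\[
   |u|_1^2 = |u_{p-1}|_1^2 + c^2 |\chi|_1^2 + 2c \int_{-1}^1 u_{p-1}'(x)\, \chi'(x) \, dx.
\]
If I can show the cross term vanishes, then $|u|_1^2 \geq |u_{p-1}|_1^2 = |\II_{p-1} u|_1^2$ and the result follows immediately.

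The vanishing of the cross term is where the proof of \Cref{lem:chi-h1-seminorm} does the work for me. That computation shows $\chi'(x) = (p^2-p) P_{p-1}(x)$. Therefore
\[
   \int_{-1}^1 u_{p-1}'(x) \chi'(x) \, dx = (p^2 - p) \int_{-1}^1 u_{p-1}'(x)\, P_{p-1}(x) \, dx.
\]
Since $u_{p-1} \in \QQ_{p-1}$, its derivative $u_{p-1}'$ lies in $\QQ_{p-2}$, and by $L^2$-orthogonality \eqref{eq:legendre-l2} of the Legendre polynomials this integral vanishes.

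No step is really an obstacle; the proof is essentially a one-line observation once the right decomposition is in place. The only thing to watch is to make sure I am citing the derivation of $\chi'(x) = (p^2-p) P_{p-1}(x)$ from \Cref{lem:chi-h1-seminorm} (which in turn comes from the identity $(2n+1)P_n = (P_{n+1} - P_{n-1})'$), rather than re-deriving it, so that the proof is short and self-contained within the surrounding technical section.
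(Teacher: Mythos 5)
Your proposal is correct and follows exactly the paper's argument: the same decomposition $u = u_{p-1} + c\chi$, the same identity $\chi' = (p^2-p)P_{p-1}$ from \eqref{eq:chi-derivative}, and the same vanishing of the cross term by Legendre orthogonality (the paper leaves this orthogonality implicit in its final displayed equality, whereas you spell it out). No gaps.
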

\begin{proof}
   As in the proof of \Cref{lem:l2-interp-stable-1d}, we write $u_{p-1} = \II_{p-1} u$, and $u = u_{p-1} + c \chi$ for some constant $c$.
   From \eqref{eq:chi-derivative}, recall that $\chi'(x) = (p^2 - p)P_{p-1}(x)$, and so
   \begin{align*}
      | u |_1^2
         &= \int_{-1}^1 \left( u_{p-1}'(t) + c \chi'(t) \right)^2 \, dt \\
         &= \int_{-1}^1 \left( u_{p-1}'(t) + c (p^2 - p) P_{p-1} \right)^2 \, dt \\
         &= | u_{p-1} |_1^2 + 2 c^2 \frac{(p^2-p)^2}{2p - 1},
   \end{align*}
   from which the result follows.
\end{proof}

We now turn to the $H^1$-stability of interpolation at Gauss--Lobatto nodes in $d$ dimensions.
This result was proven for $d=2$ and $d=3$ in \cite{Pavarino1992,Pavarino1993};
presently we provide a $d$-dimensional proof with explicit constants.

\begin{lem*}[Restatement of \Cref{lem:h1-interp-stability}]
   Consider the space $\QQ_p^d$ (resp.~$\QQ_{p-1}^d$) of $d$-variate polynomials of degree at most $p$ (resp.~$p-1$) in each variable.
   Let $\II_{p-1} : \QQ_p \to \QQ_{p-1}$ denote the interpolation operator at the Gauss--Lobatto points, and let $\II_{p-1}^\oo{d} : \QQ_p^d \to \QQ_{p-1}^d$ denote its $d$-dimensional tensor product,
   \[
      \II_{p-1}^\oo{d} = \II_{p-1} \otimes \II_{p-1} \otimes \cdots \otimes \II_{p-1}.
   \]
   Then, for all $u \in \QQ_p^d$, it holds that
   \begin{align}
      \label{eq:nd-l2-stability}
      \| \II_{p-1}^\oo{d} u \|_0^2 &\leq \left( 2 + \frac{4}{2p-3} \right)^d \| u \|_0^2, \\
      \label{eq:nd-h1-stability}
      | \II_{p-1}^\oo{d} u |_1^2 &\leq \left( 2 + \frac{4}{2p-3} \right)^{d-1} | u |_1^2.
   \end{align}
\end{lem*}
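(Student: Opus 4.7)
The plan is to prove both bounds by separating the tensor-product operator $\II_{p-1}^{\otimes d}$ into its coordinate-wise factors and applying the one-dimensional estimates \Cref{lem:l2-interp-stable-1d} and \Cref{lem:h1-interp-stable-1d}. Denote by $\II_j$ the operator $\II_{p-1}$ acting in the $x_j$ variable only (treating the other variables as parameters). Since these operators act on disjoint variables, they commute with each other, and $\II_j$ commutes with $\partial_k$ whenever $j \neq k$. The factorization $\II_{p-1}^{\otimes d} = \II_1 \II_2 \cdots \II_d$ may then be used with the factors in any order.

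For the $L^2$ bound \eqref{eq:nd-l2-stability}, I would peel off one direction at a time. Fix an ordering and write $\II_{p-1}^{\otimes d} u = \II_1 (\II_2 \cdots \II_d u)$. For almost every fixed $(x_2, \dots, x_d)$, the slice $v(x_1) := (\II_2 \cdots \II_d u)(x_1, x_2, \dots, x_d)$ is a polynomial of degree at most $p$ in $x_1$, since the other operators do not touch that variable. Hence \Cref{lem:l2-interp-stable-1d} gives $\int |\II_1 v|^2 \, dx_1 \leq (2 + 4/(2p-3)) \int |v|^2 \, dx_1$. Integrating over $(x_2, \dots, x_d)$ via Fubini and iterating the same argument across the remaining $d-1$ directions yields the factor $(2 + 4/(2p-3))^d$ claimed in \eqref{eq:nd-l2-stability}.

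For the $H^1$ bound \eqref{eq:nd-h1-stability}, I would expand $|\II_{p-1}^{\otimes d} u|_1^2 = \sum_{k=1}^d \|\partial_k \II_{p-1}^{\otimes d} u\|_0^2$ and bound each term individually. To handle the $k$-th term, reorder the factors so that $\II_k$ is outermost, writing $\II_{p-1}^{\otimes d} u = \II_k w$ with $w := \prod_{j \neq k} \II_j u$. Slicewise in $x_k$ (with the other variables fixed), $w$ is a polynomial of degree at most $p$ in $x_k$, so \Cref{lem:h1-interp-stable-1d} gives the pointwise-in-$(x_j)_{j \neq k}$ inequality $\int |\partial_k \II_k w|^2 \, dx_k \leq \int |\partial_k w|^2 \, dx_k$, crucially with constant exactly $1$. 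Next, since $\partial_k$ commutes with every $\II_j$ for $j \neq k$, we may move the derivative inside to obtain $\partial_k w = \prod_{j \neq k} \II_j (\partial_k u)$. Finally, $\partial_k u$ is a polynomial of degree at most $p$ in each $x_j$ with $j \neq k$, so applying \Cref{lem:l2-interp-stable-1d} in each of the $d-1$ remaining directions accumulates a factor $(2 + 4/(2p-3))^{d-1}$. This yields $\|\partial_k \II_{p-1}^{\otimes d} u\|_0^2 \leq (2 + 4/(2p-3))^{d-1} \|\partial_k u\|_0^2$, and summing over $k$ produces \eqref{eq:nd-h1-stability}.

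The only real subtlety is the interplay in the $H^1$ step: one must reorder the commuting factors so that the \emph{one} distinguished direction in which the derivative acts is treated by the sharp-constant \Cref{lem:h1-interp-stable-1d}, while the other $d-1$ directions use the $L^2$-stability bound. The absence of a constant in the one-dimensional $H^1$ estimate is precisely what produces the exponent $d-1$ rather than $d$ in \eqref{eq:nd-h1-stability}. Beyond bookkeeping the ordering of factors and verifying at each stage that the variable being interpolated still sees a degree-$p$ polynomial, I do not expect any substantive obstacle.
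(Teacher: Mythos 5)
Your proof is correct and takes essentially the same route as the paper: both arguments rest on \Cref{lem:l2-interp-stable-1d} and \Cref{lem:h1-interp-stable-1d} together with the tensor-product structure, with the distinguished derivative direction absorbing the constant-$1$ bound and each of the remaining $d-1$ directions contributing a factor $2+\tfrac{4}{2p-3}$. The only difference is presentational --- the paper works with Kronecker products of mass and stiffness matrices and their generalized eigenvalues, while you run the equivalent slicewise Fubini argument on the functions themselves.
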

\begin{proof}
   Let $\mathsf{M}$ denote the one-dimensional mass matrix on the space $\QQ_p$ in a given basis.
   Then, $\mathsf{M}$ satisfies $\| u \|_0^2 = \mathsf{u}^\tr \mathsf{M} \mathsf{u},$
   where $\mathsf{u}$ is the vector representation of $u$ in the same basis.
   Likewise, let $\mathsf{L}$ be the one-dimensional stiffness matrix, satisfying $| u |_1^2 = \mathsf{u}^\tr \mathsf{L} \mathsf{u}$.
   Let $\mathsf{J}$ be the matrix representation of $\II_{p-1}$, and let $\widehat{\mathsf{M}}$ and $\widehat{\mathsf{L}}$ denote the mass and stiffness matrices on $\QQ_{p-1}$.
   Then, the generalized eigenvalues
   \begin{align}
      \label{eq:1d-generalized-eigvals}
      \mathsf{J}^\tr \widehat{\mathsf{M}} \mathsf{J} \mathsf{u} = \mu \mathsf{M} \mathsf{u}, \qquad\text{and}\qquad
      \mathsf{J}^\tr \widehat{\mathsf{L}} \mathsf{J} \mathsf{u} = \lambda \mathsf{L} \mathsf{u},
   \end{align}
   satisfy
   \[
      \mu \leq 2 + \frac{4}{2p-3}, \qquad\text{and}\qquad
      \lambda \leq 1.
   \]
   The bound on $\mu$ follows from \Cref{lem:l2-interp-stable-1d} and the bound on $\lambda$ follows from \Cref{lem:h1-interp-stable-1d}.

   Note that for $u \in \QQ_{p-1}^d$, it then holds that
   \begin{align*}
      \| u \|_0^2 &= (\mathsf{M} \otimes \mathsf{M} \otimes \cdots \otimes \mathsf{M}) \mathsf{u} = \mathsf{M}^\oo{d} \mathsf{u}.
   \end{align*}
   Also, the matrix representation of $\II_{p-1}^\oo{d}$ is given by $\mathsf{J}^\oo{d} = \mathsf{J} \otimes \mathsf{J} \otimes \cdots \otimes \mathsf{J}$.
   Then,
   \[
      (\mathsf{J}^\oo{d})^\tr \widehat{\mathsf{M}}^\oo{d} \mathsf{J}^\oo{d}
      = (\mathsf{J}^\tr \widehat{\mathsf{M}} \mathsf{J}) \otimes (\mathsf{J}^\tr \widehat{\mathsf{M}} \mathsf{J})
      \otimes \cdots \otimes (\mathsf{J}^\tr \widehat{\mathsf{M}} \mathsf{J}),
   \]
   which implies that the generalized eigenvalues $\nu$
   \[
      (\mathsf{J}^\oo{d})^\tr \widehat{\mathsf{M}}^\oo{d} \mathsf{J}^\oo{d} \mathsf{u} = \nu \mathsf{M}^\oo{d} \mathsf{u}
   \]
   are given by the $d$-fold products of the one-dimensional generalized eigenvalues $\mu$, from which \eqref{eq:nd-l2-stability} follows.

   The $H^1$ seminorm in $d$ dimensions is induced by the $d$-dimensional stiffness matrix
   \[
      \mathsf{L}_d =
      (\mathsf{L} \otimes \mathsf{M} \otimes \cdots \otimes \mathsf{M}) +
      (\mathsf{M} \otimes \mathsf{L} \otimes \cdots \otimes \mathsf{M}) +
      \cdots +
      (\mathsf{M} \otimes \mathsf{M} \otimes \cdots \otimes \mathsf{L}),
   \]
   and similarly for $\mathsf{\widehat{L}}_d$.
   As above,
   \begin{multline*}
      (\mathsf{J}^\oo{d})^\tr \widehat{\mathsf{L}}_d \mathsf{J}^\oo{d} =
      (\mathsf{J}^\tr \widehat{\mathsf{L}} \mathsf{J} \otimes \mathsf{J}^\tr\widehat{\mathsf{M}} \mathsf{J} \otimes \cdots \otimes \mathsf{J}^\tr\widehat{\mathsf{M}}\mathsf{J}) \\ +
      (\mathsf{J}^\tr\widehat{\mathsf{M}} \mathsf{J} \otimes \mathsf{J}^\tr\widehat{\mathsf{L}} \mathsf{J} \otimes \cdots \otimes \mathsf{J}^\tr\widehat{\mathsf{M}}\mathsf{J}) +
      \cdots +
      (\mathsf{J}^\tr\widehat{\mathsf{M}} \mathsf{J} \otimes \mathsf{J}^\tr\widehat{\mathsf{M}} \mathsf{J} \otimes \cdots \otimes \mathsf{J}^\tr\widehat{\mathsf{L}}\mathsf{J}).
   \end{multline*}
   Each summand contains $d-1$ mass matrix terms, and one stiffness matrix term, and so the generalized eigenvalues for pairs of matrices of the form $\mathsf{J}^\tr \widehat{\mathsf{L}} \mathsf{J} \otimes \mathsf{J}^\tr\widehat{\mathsf{M}} \mathsf{J} \otimes \cdots \otimes \mathsf{J}^\tr\widehat{\mathsf{M}}\mathsf{J}$ and $\mathsf{L} \otimes \mathsf{M} \otimes \cdots \otimes \mathsf{M}$ are given by the product of $d-1$ of the generalized eigenvalues $\mu$ from \eqref{eq:1d-generalized-eigvals}.
   Thus,
   \[
      \mathsf{u}^\tr (\mathsf{J}^\tr \mathsf{L} \mathsf{J} \otimes \mathsf{J}^\tr\mathsf{M} \mathsf{J} \otimes \cdots \otimes \mathsf{J}^\tr\mathsf{M}\mathsf{J}) \mathsf{u} \leq \left(2 + \frac{4}{2p-3}\right)^{d-1} \mathsf{u}^\tr \left( \mathsf{L} \otimes \mathsf{M} \otimes \cdots \otimes \mathsf{M} \right) \mathsf{u}.
   \]
   Since this bound holds for every such summand, we then have that
   \[
      \mathsf{u}^\tr (\mathsf{J}^\oo{d})^\tr \mathsf{L}_d \mathsf{J}^\oo{d} \mathsf{u} \leq \left(2 + \frac{4}{2p-3}\right)^{d-1} \mathsf{u}^\tr \mathsf{L}_d \mathsf{u},
   \]
   proving \eqref{eq:nd-h1-stability}.
\end{proof}

\begin{lem*}[Restatement of \Cref{lem:p-pm1-interp}]
   For any $u \in \QQ_p$ it holds that
   \[
      \| u - \II_{p-1} u \|_0^2 \leq \frac{2}{4p^2 - 4p - 3} | u |_{1}^2.
   \]
\end{lem*}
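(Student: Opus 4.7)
The plan is to exploit the same decomposition used in the proofs of \Cref{lem:l2-interp-stable-1d} and \Cref{lem:h1-interp-stable-1d}. Write $u = u_{p-1} + c\chi$, where $u_{p-1} = \II_{p-1} u \in \QQ_{p-1}$, $\chi = \chi_p$ is the polynomial defined in \eqref{eq:lobatto-chi}, and $c$ is a scalar. Since $\chi$ vanishes at all Gauss--Lobatto nodes, one has immediately $u - \II_{p-1} u = c \chi$, so
\[
   \| u - \II_{p-1} u \|_0^2 = c^2 \| \chi \|_0^2.
\]

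Next I would invoke the explicit value of $\| \chi \|_0^2$ already computed in \eqref{eq:chi-norm}, namely $\| \chi \|_0^2 = \left(\frac{p^2 - p}{2p-1}\right)^2 \left(\frac{2}{2p+1} + \frac{2}{2p-3}\right)$, and simplify the sum of the two fractions to $\frac{4(2p-1)}{(2p+1)(2p-3)}$. This collapses the numerator: $\| \chi \|_0^2 = \frac{4(p^2-p)^2}{(2p-1)(2p+1)(2p-3)}$.

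To get a lower bound on $|u|_1^2$ in terms of $c$, I reuse the orthogonality identity established inside the proof of \Cref{lem:h1-interp-stable-1d}, namely $|u|_1^2 = |u_{p-1}|_1^2 + 2 c^2 \frac{(p^2-p)^2}{2p-1}$, which came from $\chi'(x) = (p^2-p) P_{p-1}(x)$ and the $L^2$-orthogonality of Legendre polynomials. Dropping the nonnegative $|u_{p-1}|_1^2$ term gives $|u|_1^2 \geq c^2 \cdot \frac{2(p^2-p)^2}{2p-1}$ (equivalently $|u|_1^2 \geq c^2 |\chi|_1^2$ using \Cref{lem:chi-h1-seminorm}).

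Finally I would divide the two bounds: the factor $(p^2-p)^2$ cancels, one factor of $(2p-1)$ cancels, and the remaining ratio is $\frac{2}{(2p+1)(2p-3)} = \frac{2}{4p^2 - 4p - 3}$, which is exactly the claimed constant. There is no real obstacle here since every ingredient (the explicit $L^2$ and $H^1$ norms of $\chi$, and the Pythagorean-style identity for $|u|_1^2$) has already been established in the preceding lemmas; the argument is essentially just bookkeeping of constants, and its only delicate point is keeping track of the algebraic simplification $\frac{2}{2p+1} + \frac{2}{2p-3} = \frac{4(2p-1)}{(2p+1)(2p-3)}$ so that the cancellation with the $H^1$ bound is clean.
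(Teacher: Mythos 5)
Your proposal is correct and follows essentially the same route as the paper: both identify $u - \II_{p-1}u$ as the multiple of $\chi$ determined by the top Legendre coefficient, use the exact value of $\|\chi\|_0^2$ from \eqref{eq:chi-norm}, and lower-bound $|u|_1^2$ by the $P_{p-1}$-component of $u'$ (your invocation of the Pythagorean identity $|u|_1^2 = |u_{p-1}|_1^2 + c^2|\chi|_1^2$ from the proof of \Cref{lem:h1-interp-stable-1d} is just a repackaging of the paper's use of the $P_i'$ expansion, and yields the identical bound $|u|_1^2 \geq 2a_p^2(2p-1)$). The algebra checks out and the constant matches.
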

\begin{proof}
   Expanding $u = \sum_{i=0}^p a_i P_i$ and given the expansion \eqref{eq:chi} of $\chi = \chi_p$ results in $u - \II_{p-1} u = a_p\left( \frac{2p-1}{p^2-p} \right) \chi$.
   Therefore, using the expression \eqref{eq:chi-norm} for $\| \chi \|_0^2$,
   \[
      \| u - \II_{p-1} u \|_0^2
         = a_p^2 \left( \frac{2p-1}{p^2-p} \right)^2 \| \chi \|_0^2
         = a_p^2 \left( \frac{2}{2p+1} + \frac{2}{2p-3} \right).
   \]
   We recall the identity
   \[
      P_i' = \sum_{j=1}^{\lceil i/2 \rceil} \frac{2 P_{i + 1 - 2j}}{ \| P_{i + 1 - 2j} \|_0^2 },
   \]
   and so
   \[
      | u |_1^2 \geq a_p^2 \frac{4}{\| P_{p-1} \|_0^2} = a_p^2 (4p - 2)
   \]
   from which the result follows by simple algebra.
\end{proof}

\section{Numerical Results}
\label{sec:results}

In this section, we numerically study the performance of the preconditioners described in \Cref{sec:subspace,,sec:fictitious,,sec:auxiliary}. % Note: double comma prevents \Cref list from being compressed into a range
The subspace correction preconditioner with vertex patches is denoted $\Bsub$, the fictitious space preconditioner is denoted $\Bfic$, and the auxiliary space preconditioner is denoted $\Baux$.
The discretization and solvers have been implemented in the open-source MFEM finite element software \cite{Anderson2020,Andrej2024}.
Where indicated, an algebraic multigrid V-cycle is used to approximate the inverse of global problems (i.e.~the $H^1$-conforming subspace, fictitious space, or auxiliary space problems); we use BoomerAMG from the \textit{hypre} library for this purpose \cite{Henson2002}.

\subsection{Cartesian grid}

As a first test case, we consider an $n \times n$ Cartesian grid, and study solver performance with respect to $n$, polynomial degree $p$, and penalty parameter $\eta$.
In this case, all elements are rectangular, and so \Cref{thm:subspace-conditioning} and \Cref{thm:fictitious} guarantee uniform convergence of the subspace and fictitious space preconditioners, independent of discretization parameters.

In \Cref{tab:cartesian-cond}, we present the computed condition numbers of system matrix $A$ and the preconditioned systems $\Bsub A$ and $\Bfic A$, where $\Bsub$ is the subspace correction preconditioner of \Cref{sec:subspace}, and $\Bfic$ is the auxiliary space preconditioner of \Cref{sec:fictitious}.
Here, we compute the exact inverses of the subspace and auxiliary space problems using direct methods;
for larger problems this is not feasible.
We consider both fixing the polynomial degree and increasing the mesh refinement, and fixing the mesh size and increasing the polynomial degree.
In both cases, the condition numbers of the preconditioner systems remain bounded, whereas the condition number of the original system grows considerably.
Additionally, we consider two cases for the penalty parameter: $\eta = 10$ and $\eta = 10^4$.
Linear scaling of the unpreconditioned system with respect to $\eta$ is observed, in accordance with \Cref{lem:eigval-estimates}.
In contrast, the condition numbers of the preconditioned systems are roughly unchanged.

\begin{table}
   \caption{
      Computed condition numbers for the Cartesian grid test case using subspace preconditioner $\Bsub$ and auxiliary space preconditioner $\Bfic$.
      Dependence on mesh refinement, polynomial degree $p$, and penalty parameter $\eta$ are shown.
   }
   \label{tab:cartesian-cond}

   \begin{tabular}{cccccc}
      \multicolumn{6}{c}{$\eta = 10$} \\[3pt]
      \toprule
      & $n$ & $\kappa(A)$ & $\kappa(\Bsub A)$ & $\kappa(\Bfic A)$ & $\kappa(\Baux F)$ \\
      \midrule
      $p = 2$ & 4  & $1.67\times10^{3}$ & $5.92$ & $2.92$ & $3.80$ \\
      $p = 2$ & 8  & $6.74\times10^{3}$ & $5.92$ & $3.33$ & $4.73$ \\
      $p = 2$ & 16 & $2.70\times10^{4}$ & $5.92$ & $3.46$ & $5.02$ \\
      \midrule
      $p = 3$ & 4  & $2.13\times10^{3}$ & $5.96$ & $1.88$ & $4.24$ \\
      $p = 4$ & 4  & $2.66\times10^{3}$ & $5.98$ & $1.68$ & $4.39$ \\
      $p = 5$ & 4  & $4.07\times10^{3}$ & $5.98$ & $1.60$ & $4.32$ \\
      \bottomrule
   \end{tabular}

   \vspace{0.5cm}

   \begin{tabular}{cccccc}
      \multicolumn{6}{c}{$\eta = 10^4$} \\[3pt]
      \toprule
      & $n$ & $\kappa(A)$ & $\kappa(\Bsub A)$ & $\kappa(\Bfic A)$ & $\kappa(\Baux F)$ \\
      \midrule
      $p = 2$ & 4 & $1.66\times10^{6}$ & $6.00$ & $2.84$ & $3.93$ \\
      $p = 2$ & 8 & $6.67\times10^{6}$ & $6.72$ & $3.31$ & $4.85$ \\
      $p = 2$ & 16 & $2.67\times10^{7}$ & $7.07$ & $3.45$ & $5.14$ \\
      \midrule
      $p = 3$ & 4 & $2.13\times10^{6}$ & $6.00$ & $1.84$ & $4.31$ \\
      $p = 4$ & 4 & $2.69\times10^{6}$ & $6.00$ & $1.65$ & $4.43$ \\
      $p = 5$ & 4 & $4.13\times10^{6}$ & $6.00$ & $1.58$ & $4.32$ \\
      \bottomrule
   \end{tabular}
\end{table}

\begin{table}
   \caption{Conjugate gradient iteration counts (with relative tolerance of $10^{-12}$) for the Cartesian grid test case with subspace, fictitious space, and auxiliary space preconditioners.}
   \label{tab:cartesian-iters}

   \aboverulesep = 0pt
   \belowrulesep = 0pt

   \resizebox{\linewidth}{!}{%
   \begin{tabular}{|cY?cc|cc|cc?cc|cc|cc|}
      \toprule
      \tabstrut & \multicolumn{1}{c}{} & \multicolumn{6}{c?}{$p=2\qquad\eta=1$}&\multicolumn{6}{c|}{$p=2\qquad\eta=100$} \\
      \midrule[1pt]
      \multicolumn{2}{|c?}{} & \multicolumn{2}{c|}{$\Bsub A$} & \multicolumn{2}{c}{$\Bfic A$} & \multicolumn{2}{c?}{$\Baux A$} & \multicolumn{2}{c|}{$\Bsub A$} & \multicolumn{2}{c}{$\Bfic A$} & \multicolumn{2}{c|}{$\Baux A$} \\
      $n$ & {\# DOFs} & It. & Time (s) & It. & Time (s) & It. & Time (s) & It. & Time (s) & It. & Time (s) & It. & Time (s) \\
      \midrule[1pt]
      4  &   144 & 17 & 0.0002 & 25 & 0.001 & 25 & 0.0004 & 19 & 0.0002 & 25 & 0.000 & 24 & 0.0004 \\
      8  &   544 & 24 & 0.001 & 29 & 0.002 & 28 & 0.001 & 26 & 0.001 & 30 & 0.002 & 28 & 0.001 \\
      16 &  2112 & 32 & 0.004 & 30 & 0.008 & 29 & 0.005 & 32 & 0.004 & 32 & 0.008 & 28 & 0.005 \\
      32 &  8320 & 39 & 0.020 & 30 & 0.032 & 29 & 0.018 & 38 & 0.020 & 33 & 0.034 & 28 & 0.018 \\
      \midrule[1pt]
      \tabstrut & \multicolumn{1}{c}{} & \multicolumn{6}{c?}{$p=3\qquad\eta=1$}&\multicolumn{6}{c|}{$p=3\qquad\eta=100$} \\
      \midrule[1pt]
      4  &   312 & 20 & 0.001 & 23 & 0.001 & 26 & 0.001 & 21 & 0.001 & 21 & 0.001 & 26 & 0.001 \\
      8  &  1200 & 26 & 0.003 & 24 & 0.004 & 28 & 0.003 & 26 & 0.003 & 23 & 0.004 & 28 & 0.003 \\
      16 &  4704 & 35 & 0.016 & 24 & 0.017 & 29 & 0.013 & 32 & 0.014 & 23 & 0.016 & 30 & 0.014 \\
      32 & 18624 & 42 & 0.078 & 24 & 0.069 & 30 & 0.057 & 38 & 0.069 & 23 & 0.084 & 30 & 0.055 \\
      \midrule[1pt]
      \tabstrut & \multicolumn{1}{c}{} & \multicolumn{6}{c?}{$p=4\qquad\eta=1$}&\multicolumn{6}{c|}{$p=4\qquad\eta=100$} \\
      \midrule[1pt]
      4  &   544 & 21 & 0.001 & 26 & 0.002 & 29 & 0.002 & 21 & 0.001 & 23 & 0.002 & 26 & 0.002 \\
      8  &  2112 & 26 & 0.008 & 27 & 0.007 & 29 & 0.008 & 25 & 0.008 & 24 & 0.006 & 27 & 0.007 \\
      16 &  8320 & 35 & 0.046 & 27 & 0.031 & 29 & 0.032 & 32 & 0.041 & 25 & 0.028 & 27 & 0.029 \\
      32 & 33024 & 42 & 0.230 & 27 & 0.127 & 29 & 0.131 & 38 & 0.208 & 25 & 0.120 & 27 & 0.119 \\
      \midrule[1pt]
      \tabstrut & \multicolumn{1}{c}{} & \multicolumn{6}{c?}{$p=5\qquad\eta=1$}&\multicolumn{6}{c|}{$p=5\qquad\eta=100$} \\
      \midrule[1pt]
      4  &   840 & 21 & 0.003 & 29 & 0.004 & 31 & 0.003 & 21 & 0.003 & 22 & 0.003 & 30 & 0.003 \\
      8  &  3280 & 27 & 0.018 & 29 & 0.016 & 34 & 0.015 & 25 & 0.017 & 22 & 0.011 & 33 & 0.015 \\
      16 & 12960 & 35 & 0.139 & 29 & 0.065 & 35 & 0.070 & 31 & 0.092 & 22 & 0.049 & 34 & 0.063 \\
      32 & 51520 & 43 & 0.570 & 29 & 0.257 & 36 & 0.277 & 38 & 0.524 & 22 & 0.199 & 34 & 0.260 \\
      \midrule[1pt]
      \tabstrut & \multicolumn{1}{c}{} & \multicolumn{6}{c?}{$p=6\qquad\eta=1$}&\multicolumn{6}{c|}{$p=6\qquad\eta=100$} \\
      \midrule[1pt]
      4  &  1200 & 22 & 0.007 & 33 & 0.007 & 33 & 0.007 & 21 & 0.007 & 24 & 0.005 & 27 & 0.005 \\
      8  &  4704 & 27 & 0.040 & 33 & 0.029 & 34 & 0.028 & 25 & 0.036 & 25 & 0.022 & 28 & 0.022 \\
      16 & 18624 & 35 & 0.205 & 33 & 0.163 & 34 & 0.118 & 31 & 0.185 & 26 & 0.094 & 28 & 0.095 \\
      32 & 74112 & 43 & 1.047 & 32 & 0.480 & 34 & 0.461 & 38 & 1.009 & 26 & 0.384 & 28 & 0.385 \\
      \bottomrule
   \end{tabular}}
\end{table}

In \Cref{tab:cartesian-iters}, we consider mesh refinement for polynomial degree $p \in \{ 2, 3, 4, 5 \}$.
Since these problems are larger, direct solvers are not practical for the global problems (the coarse subspace and the auxiliary space).
The inverse of the coarse problem $A_0$ in the subspace preconditioner is replaced with one V-cycle of \textit{hypre}'s BoomerAMG algebraic multigrid.
The inverse of the discontinuous Galerkin fictitious space and auxiliary space problems $\widetilde{A}$ and $A_0$ are replaced with one V-cycle of algebraic multigrid constructed with the spectrally equivalent low-order-refined discretization, as described in \Cref{sec:matrix-free}.
In \Cref{tab:cartesian-iters}, we report solver runtime and the number of conjugate iterations required to achieve a relative tolerance of $10^{-12}$.
The subspace and auxiliary space preconditioners exhibit overall similar runtimes; however, for higher-order problems, the fictitious space solver is the fastest, and the subspace correction preconditioner is the slowest.
This can be attributed to the local vertex patch solvers required by $\Bsub$, which scale poorly with $p$.
The preconditioners $\Bsub$ and $\Baux$ exhibit a slight preasymptotic increase in the number of iterations with increasing mesh refinement; this can be attributed to clustering of eigenvalues in problems with smaller sizes, and is consistent with the condition numbers computed in \Cref{tab:cartesian-cond}.

\subsection{Unstructured mesh with affine elements}
\label{sec:unstructured-affine}

In this test case, we consider an unstructured mesh consisting of only affine elements (parallelograms).
For this mesh, the elements are arranged to form a star, as depicted in the left panel of \Cref{fig:meshes}..
We consider scaling of the preconditioners with respect to polynomial degree, and under uniform refinement of the mesh.
The conjugate gradient iteration counts and runtimes are shown in \Cref{tab:affine-iters}.
A relative tolerance of $10^{-12}$ is used for the CG stopping criterion.
Two cases are considered for the penalty parameter: $\eta = 1$ and $\eta = 100$.
On this mesh, we expect $\kappa(\Bsub A)$ and $\kappa(\Baux A)$ to be independent of $\eta$, whereas $\kappa(\Bfic A)$ is bounded only up to a linear factor in $\eta$.
All three solvers exhibit iteration counts that appear relatively robust with respect to polynomial degree and mesh refinement.
As in the previous test case, slight preasymptotic scaling is observed under mesh refinement.
The behavior of all three preconditioners is qualitatively similar.
For higher-order problems, the fictitious space and auxiliary space preconditioners display faster runtimes than the subspace correction preconditioner.

\begin{figure}
   \begin{tikzpicture}
   \draw (0,0) -- (1,0);
   \draw (1,0) -- (1.30902,0.951057);
   \draw (0.309017,0.951057) -- (1.30902,0.951057);
   \draw (0,0) -- (0.309017,0.951057);
   \draw (0.309017,0.951057) -- (-0.5,1.53884);
   \draw (-0.809017,0.587785) -- (-0.5,1.53884);
   \draw (0,0) -- (-0.809017,0.587785);
   \draw (-0.809017,0.587785) -- (-1.61803,0);
   \draw (-0.809017,-0.587785) -- (-1.61803,0);
   \draw (0,0) -- (-0.809017,-0.587785);
   \draw (-0.809017,-0.587785) -- (-0.5,-1.53884);
   \draw (0.309017,-0.951057) -- (-0.5,-1.53884);
   \draw (0,0) -- (0.309017,-0.951057);
   \draw (0.309017,-0.951057) -- (1.30902,-0.951057);
   \draw (1,0) -- (1.30902,-0.951057);
   % \draw (0,0) node[circle, fill, scale=0.275, black]{};
   % \draw (1,0) node[circle, fill, scale=0.275, black]{};
   % \draw (0.309017,0.951057) node[circle, fill, scale=0.275, black]{};
   % \draw (1.30902,0.951057) node[circle, fill, scale=0.275, black]{};
   % \draw (-0.809017,0.587785) node[circle, fill, scale=0.275, black]{};
   % \draw (-0.5,1.53884) node[circle, fill, scale=0.275, black]{};
   % \draw (-0.809017,-0.587785) node[circle, fill, scale=0.275, black]{};
   % \draw (-1.61803,0) node[circle, fill, scale=0.275, black]{};
   % \draw (0.309017,-0.951057) node[circle, fill, scale=0.275, black]{};
   % \draw (-0.5,-1.53884) node[circle, fill, scale=0.275, black]{};
   % \draw (1.30902,-0.951057) node[circle, fill, scale=0.275, black]{};

   \node at (0,-2.25) {(a) $\ell = 0$};

   \begin{scope}[shift={(4.0,0)}]
      \draw (0,0) -- (0.5,0);
      \draw (0.5,0) -- (0.654509,0.475529);
      \draw (0.154508,0.475529) -- (0.654509,0.475529);
      \draw (0,0) -- (0.154508,0.475529);
      \draw (1,0) -- (0.5,0);
      \draw (1,0) -- (1.15451,0.475529);
      \draw (1.15451,0.475529) -- (0.654509,0.475529);
      \draw (1.30902,0.951057) -- (1.15451,0.475529);
      \draw (1.30902,0.951057) -- (0.809019,0.951057);
      \draw (0.809019,0.951057) -- (0.654509,0.475529);
      \draw (0.309017,0.951057) -- (0.809019,0.951057);
      \draw (0.309017,0.951057) -- (0.154508,0.475529);
      \draw (0.154508,0.475529) -- (-0.25,0.769421);
      \draw (-0.404508,0.293893) -- (-0.25,0.769421);
      \draw (0,0) -- (-0.404508,0.293893);
      \draw (0.309017,0.951057) -- (-0.0954915,1.24495);
      \draw (-0.0954915,1.24495) -- (-0.25,0.769421);
      \draw (-0.5,1.53884) -- (-0.0954915,1.24495);
      \draw (-0.5,1.53884) -- (-0.654508,1.06331);
      \draw (-0.654508,1.06331) -- (-0.25,0.769421);
      \draw (-0.809017,0.587785) -- (-0.654508,1.06331);
      \draw (-0.809017,0.587785) -- (-0.404508,0.293893);
      \draw (-0.404508,0.293893) -- (-0.809016,0);
      \draw (-0.404508,-0.293893) -- (-0.809016,0);
      \draw (0,0) -- (-0.404508,-0.293893);
      \draw (-0.809017,0.587785) -- (-1.21352,0.293893);
      \draw (-1.21352,0.293893) -- (-0.809016,0);
      \draw (-1.61803,0) -- (-1.21352,0.293893);
      \draw (-1.61803,0) -- (-1.21352,-0.293893);
      \draw (-1.21352,-0.293893) -- (-0.809016,0);
      \draw (-0.809017,-0.587785) -- (-1.21352,-0.293893);
      \draw (-0.809017,-0.587785) -- (-0.404508,-0.293893);
      \draw (-0.404508,-0.293893) -- (-0.25,-0.76942);
      \draw (0.154508,-0.475529) -- (-0.25,-0.76942);
      \draw (0,0) -- (0.154508,-0.475529);
      \draw (-0.809017,-0.587785) -- (-0.654508,-1.06331);
      \draw (-0.654508,-1.06331) -- (-0.25,-0.76942);
      \draw (-0.5,-1.53884) -- (-0.654508,-1.06331);
      \draw (-0.5,-1.53884) -- (-0.0954915,-1.24495);
      \draw (-0.0954915,-1.24495) -- (-0.25,-0.76942);
      \draw (0.309017,-0.951057) -- (-0.0954915,-1.24495);
      \draw (0.309017,-0.951057) -- (0.154508,-0.475529);
      \draw (0.154508,-0.475529) -- (0.654509,-0.475529);
      \draw (0.5,0) -- (0.654509,-0.475529);
      \draw (0.309017,-0.951057) -- (0.809019,-0.951057);
      \draw (0.809019,-0.951057) -- (0.654509,-0.475529);
      \draw (1.30902,-0.951057) -- (0.809019,-0.951057);
      \draw (1.30902,-0.951057) -- (1.15451,-0.475529);
      \draw (1.15451,-0.475529) -- (0.654509,-0.475529);
      \draw (1,0) -- (1.15451,-0.475529);

      \node at (0,-2.25) {(b) $\ell = 1$};
   \end{scope}

\end{tikzpicture}
   \hspace{1cm}
   \input{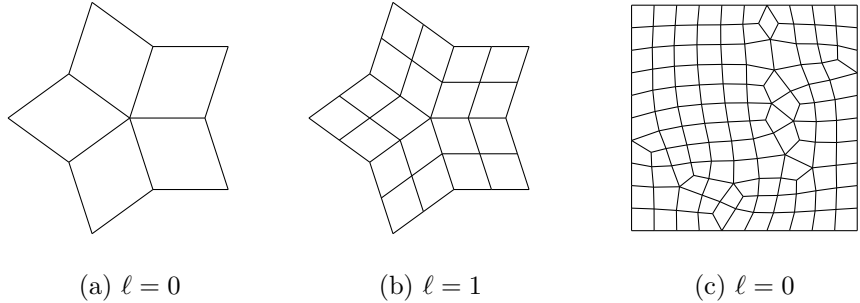}
   \caption{Panels (a) and (b): unstructured mesh of affine elements (parallelograms) and uniform refinement levels $\ell$ used for the test case in \Cref{sec:unstructured-affine}.
   Panel (c): unstructured with with non-affine (skewed) elements used for the test case in \Cref{sec:unstructured-non-affine}.}
   \label{fig:meshes}.
\end{figure}

\begin{table}
   \caption{Conjugate gradient iteration counts (with relative tolerance of $10^{-12}$) for the affine mesh test case with subspace, fictitious space, and auxiliary space preconditioners.}
   \label{tab:affine-iters}

   \aboverulesep = 0pt
   \belowrulesep = 0pt

   \resizebox{\linewidth}{!}{%
   \begin{tabular}{|cY?cc|cc|cc?cc|cc|cc|}
      \toprule
      \tabstrut & \multicolumn{1}{c}{} & \multicolumn{6}{c?}{$p=2\qquad\eta=1$}&\multicolumn{6}{c|}{$p=2\qquad\eta=100$} \\
      \midrule[1pt]
      \multicolumn{2}{|c?}{} & \multicolumn{2}{c|}{$\Bsub A$} & \multicolumn{2}{c}{$\Bfic A$} & \multicolumn{2}{c?}{$\Baux A$} & \multicolumn{2}{c|}{$\Bsub A$} & \multicolumn{2}{c}{$\Bfic A$} & \multicolumn{2}{c|}{$\Baux A$} \\
      $\ell$ & {\# DOFs} & It. & Time (s) & It. & Time (s) & It. & Time (s) & It. & Time (s) & It. & Time (s) & It. & Time (s) \\
      \midrule[1pt]
      0  &    50 & 16 & 0.0002 & 20 & 0.0002 & 22 & 0.0001 & 13 & 0.0001 & 27 & 0.0003 & 25 & 0.0001 \\
      1  &   180 & 22 & 0.0005 & 28 & 0.001 & 30 & 0.001 & 24 & 0.0003 & 53 & 0.001 & 37 & 0.001 \\
      2  &   680 & 29 & 0.001 & 32 & 0.003 & 34 & 0.002 & 28 & 0.001 & 78 & 0.007 & 42 & 0.002 \\
      3  &  2640 & 35 & 0.006 & 34 & 0.012 & 40 & 0.008 & 34 & 0.007 & 99 & 0.034 & 47 & 0.015 \\
      4  & 10400 & 43 & 0.029 & 36 & 0.060 & 44 & 0.034 & 41 & 0.028 & 111 & 0.144 & 46 & 0.038 \\
      \midrule[1pt]
      \tabstrut & \multicolumn{1}{c}{} & \multicolumn{6}{c?}{$p=3\qquad\eta=1$}&\multicolumn{6}{c|}{$p=3\qquad\eta=100$} \\
      \midrule[1pt]
      0  &   105 & 20 & 0.0002 & 25 & 0.0004 & 27 & 0.0003 & 16 & 0.0002 & 27 & 0.0004 & 27 & 0.0003 \\
      1  &   390 & 24 & 0.001 & 26 & 0.002 & 30 & 0.001 & 24 & 0.001 & 43 & 0.002 & 31 & 0.001 \\
      2  &  1500 & 30 & 0.004 & 27 & 0.006 & 32 & 0.005 & 29 & 0.004 & 55 & 0.033 & 35 & 0.005 \\
      3  &  5880 & 37 & 0.021 & 27 & 0.023 & 34 & 0.021 & 35 & 0.022 & 65 & 0.055 & 37 & 0.021 \\
      4  & 23280 & 44 & 0.101 & 27 & 0.097 & 35 & 0.082 & 41 & 0.101 & 68 & 0.234 & 39 & 0.093 \\
      \midrule[1pt]
      \tabstrut & \multicolumn{1}{c}{} & \multicolumn{6}{c?}{$p=4\qquad\eta=1$}&\multicolumn{6}{c|}{$p=4\qquad\eta=100$} \\
      \midrule[1pt]
      0  &   180 & 21 & 0.001 & 28 & 0.001 & 30 & 0.001 & 17 & 0.001 & 37 & 0.001 & 26 & 0.001 \\
      1  &   680 & 25 & 0.002 & 28 & 0.003 & 31 & 0.003 & 25 & 0.002 & 50 & 0.004 & 29 & 0.002 \\
      2  &  2640 & 30 & 0.033 & 29 & 0.010 & 31 & 0.010 & 28 & 0.011 & 64 & 0.022 & 31 & 0.010 \\
      3  & 10400 & 38 & 0.061 & 30 & 0.044 & 32 & 0.043 & 34 & 0.088 & 72 & 0.098 & 31 & 0.042 \\
      4  & 41280 & 45 & 0.306 & 30 & 0.181 & 32 & 0.177 & 41 & 0.274 & 81 & 0.458 & 32 & 0.179 \\
      \midrule[1pt]
      \tabstrut & \multicolumn{1}{c}{} & \multicolumn{6}{c?}{$p=5\qquad\eta=1$}&\multicolumn{6}{c|}{$p=5\qquad\eta=100$} \\
      \midrule[1pt]
      0  &   275 & 21 & 0.001 & 31 & 0.001 & 32 & 0.001 & 17 & 0.001 & 39 & 0.002 & 28 & 0.001 \\
      1  &  1050 & 26 & 0.006 & 31 & 0.005 & 35 & 0.005 & 25 & 0.005 & 53 & 0.009 & 33 & 0.006 \\
      2  &  4100 & 31 & 0.029 & 31 & 0.022 & 38 & 0.024 & 28 & 0.025 & 62 & 0.043 & 36 & 0.027 \\
      3  & 16200 & 38 & 0.140 & 31 & 0.085 & 40 & 0.093 & 34 & 0.133 & 70 & 0.187 & 38 & 0.087 \\
      4  & 64400 & 45 & 0.696 & 30 & 0.332 & 41 & 0.380 & 41 & 0.614 & 71 & 0.852 & 39 & 0.381 \\
      \midrule[1pt]
      \tabstrut & \multicolumn{1}{c}{} & \multicolumn{6}{c?}{$p=6\qquad\eta=1$}&\multicolumn{6}{c|}{$p=6\qquad\eta=100$} \\
      \midrule[1pt]
      0  &   390 & 22 & 0.002 & 34 & 0.002 & 33 & 0.002 & 18 & 0.002 & 41 & 0.003 & 31 & 0.002 \\
      1  &  1500 & 26 & 0.011 & 34 & 0.009 & 36 & 0.009 & 25 & 0.010 & 54 & 0.014 & 35 & 0.008 \\
      2  &  5880 & 31 & 0.055 & 34 & 0.037 & 38 & 0.040 & 28 & 0.048 & 67 & 0.081 & 36 & 0.038 \\
      3  & 23280 & 38 & 0.273 & 34 & 0.153 & 38 & 0.159 & 34 & 0.248 & 73 & 0.370 & 36 & 0.157 \\
      4  & 92640 & 45 & 1.339 & 35 & 0.661 & 38 & 0.677 & 41 & 1.293 & 79 & 1.418 & 36 & 0.624 \\
      \bottomrule
   \end{tabular}}
\end{table}

\subsection{Unstructured mesh with non-affine elements}
\label{sec:unstructured-non-affine}

In this test case, we consider an unstructured mesh with non-affined (skewed) elements, shown in the right-panel of \Cref{fig:meshes}.
The conjugate gradient iteration counts and solver runtimes are shown in \Cref{tab:unstructured-iters}.
In contrast to the two previous test cases, applied to this mesh, the convergence estimates of \Cref{thm:subspace-conditioning,,thm:fictitious,,thm:auxiliary} do not guarantee $p$- and $\eta$-independent conditioning.
Despite the lack of theoretical guarantees, the iteration scaling with respect to polynomial degree appears very mild, particularly for the subspace and auxiliary space preconditioners $\Bsub$ and $\Baux$.
The increase in the condition number with increasing penalty parameter is most noticeable for the fictitious space preconditioner $\Bfic$, for which a severe increase in iterations is observed.
For this problem, and especially at higher orders, $\Baux$ provides the fastest time to solution.

\begin{table}
   \caption{Conjugate gradient iteration counts (with relative tolerance of $10^{-12}$) for the unstructured (non-affine) test case with subspace, fictitious space, and auxiliary space preconditioners.}
   \label{tab:unstructured-iters}

   \aboverulesep = 0pt
   \belowrulesep = 0pt

   \resizebox{\linewidth}{!}{%
   \begin{tabular}{|cZ?cc|cc|cc?cc|cc|cc|}
      \toprule
      \tabstrut & \multicolumn{1}{c}{} & \multicolumn{6}{c?}{$p=2\qquad\eta=1$}&\multicolumn{6}{c|}{$p=2\qquad\eta=100$} \\
      \midrule[1pt]
      \multicolumn{2}{|c?}{} & \multicolumn{2}{c|}{$\Bsub A$} & \multicolumn{2}{c}{$\Bfic A$} & \multicolumn{2}{c?}{$\Baux A$} & \multicolumn{2}{c|}{$\Bsub A$} & \multicolumn{2}{c}{$\Bfic A$} & \multicolumn{2}{c|}{$\Baux A$} \\
      $\ell$ & {\# DOFs} & It. & Time (s) & It. & Time (s) & It. & Time (s) & It. & Time (s) & It. & Time (s) & It. & Time (s) \\
      \midrule[1pt]
      0  &   992 & 32 & 0.002 & 44 & 0.005 & 37 & 0.003 & 32 & 0.002 & 260 & 0.031 & 52 & 0.004 \\
      1  &  3888 & 43 & 0.011 & 47 & 0.025 & 42 & 0.012 & 40 & 0.010 & 342 & 0.174 & 50 & 0.016 \\
      2  & 15392 & 48 & 0.050 & 54 & 0.121 & 45 & 0.057 & 45 & 0.058 & 441 & 0.892 & 50 & 0.061 \\
      \midrule[1pt]
      \tabstrut & \multicolumn{1}{c}{} & \multicolumn{6}{c?}{$p=3\qquad\eta=1$}&\multicolumn{6}{c|}{$p=3\qquad\eta=100$} \\
      \midrule[1pt]
      0  &  2202 & 33 & 0.007 & 40 & 0.012 & 35 & 0.008 & 32 & 0.007 & 238 & 0.072 & 46 & 0.010 \\
      1  &  8688 & 44 & 0.040 & 43 & 0.060 & 35 & 0.031 & 42 & 0.034 & 302 & 0.395 & 40 & 0.039 \\
      2  & 34512 & 49 & 0.173 & 47 & 0.255 & 39 & 0.143 & 46 & 0.160 & 379 & 2.009 & 44 & 0.155 \\
      \midrule[1pt]
      \tabstrut & \multicolumn{1}{c}{} & \multicolumn{6}{c?}{$p=4\qquad\eta=1$}&\multicolumn{6}{c|}{$p=4\qquad\eta=100$} \\
      \midrule[1pt]
      0  &  3888 & 34 & 0.019 & 42 & 0.022 & 35 & 0.016 & 31 & 0.020 & 250 & 0.123 & 42 & 0.081 \\
      1  & 15392 & 45 & 0.114 & 49 & 0.108 & 35 & 0.071 & 40 & 0.098 & 352 & 0.739 & 37 & 0.076 \\
      2  & 61248 & 49 & 0.563 & 57 & 0.500 & 38 & 0.319 & 45 & 0.488 & 460 & 3.941 & 38 & 0.318 \\
      \midrule[1pt]
      \tabstrut & \multicolumn{1}{c}{} & \multicolumn{6}{c?}{$p=5\qquad\eta=1$}&\multicolumn{6}{c|}{$p=5\qquad\eta=100$} \\
      \midrule[1pt]
      0  &  6050 & 33 & 0.051 & 45 & 0.045 & 39 & 0.033 & 31 & 0.048 & 256 & 0.388 & 43 & 0.054 \\
      1  & 24000 & 45 & 0.254 & 52 & 0.223 & 43 & 0.159 & 41 & 0.221 & 331 & 1.342 & 44 & 0.172 \\
      2  & 95600 & 49 & 1.124 & 57 & 0.939 & 47 & 0.903 & 46 & 1.027 & 424 & 6.918 & 49 & 0.709 \\
      \midrule[1pt]
      \tabstrut & \multicolumn{1}{c}{} & \multicolumn{6}{c?}{$p=6\qquad\eta=1$}&\multicolumn{6}{c|}{$p=6\qquad\eta=100$} \\
      \midrule[1pt]
      0  &  8688 & 33 & 0.091 & 49 & 0.079 & 41 & 0.065 & 31 & 0.095 & 281 & 0.487 & 43 & 0.068 \\
      1  & 34512 & 45 & 0.484 & 58 & 0.396 & 43 & 0.283 & 39 & 0.422 & 375 & 2.452 & 41 & 0.258 \\
      2  & 137568 & 49 & 2.455 & 64 & 1.749 & 44 & 1.179 & 46 & 2.251 & 479 & 13.126 & 45 & 1.149 \\
      \bottomrule
   \end{tabular}}
\end{table}

\subsection{Stokes problem}

As a final test case, we consider the Stokes problem \eqref{eq:stokes} on $\Omega = [0,1]^2$.
The mesh is taken to be an $n \times n$ Cartesian grid.
Dirichlet conditions for the velocity are applied at all domain boundaries; the pressure is determined up to a constant.
The computed velocity field is exactly divergence-free.
We use the block-diagonal preconditioner $\widetilde{\mathcal{B}}$ defined by approximating the blocks of \eqref{eq:block-diag}.
The approximate Schur complement inverse $\widetilde{S}^{-1} \approx S^{-1}$ is taken to be the inverse of the diagonal of the mass matrix;
this approximation is robust with respect to $h$ and $p$.
The approximation of the inverse of the $(1,1)$-block is taken to be one of the three preconditioners considered, $\Bsub$, $\Bfic$, or $\Baux$.
The saddle-point system \eqref{eq:saddle-point} is solved using MINRES with a relative tolerance of $10^{-12}$.
As in the previous cases, we consider two choices for the penalty parameter: $\eta = 1$ and $\eta = 100$.
Note that the spectral equivalence $S \approx M$ is dependent on the inf--sup constant, which itself depends on the penalty parameter.
Therefore, even for preconditioners $B \approx A^{-1}$ that are independent of $\eta$, the equivalence $\mathcal{B} \approx \mathcal{A}^{-1}$ will be $\eta$-dependent.

MINRES iteration counts are shown in \Cref{tab:stokes}.
Because of the considerations discussed above, the iteration counts are significantly higher in the $\eta = 100$ case for all three of the solvers considered.
This emphasizes the importance for this problem of choosing the penalty no larger than is required for stability of the method, for example by using the explicit expressions derived in \cite{Shahbazi2005}.
For this problem, all of the solvers are robust with respect to the polynomial degree.
It is also interesting to note that for some of the test cases, the iteration counts \textit{decrease} with increasing $p$;
this may be because the approximation of the Gauss--Lobatto mass matrix by its diagonal improved with increasing $p$ \cite{Kolev2022}.
At lower polynomial degrees, preasymptotic increase in iteration counts is observed with increasing mesh refinement.
With higher polynomial degrees this effect is mitigated.

\begin{table}
   \caption{
      MINRES iteration counts (with relative tolerance of $10^{-12}$) for the Stokes problem.
      The approximate block diagonal preconditioner $\widetilde{\mathcal{B}}$ is used, where the $(1,1)$-block is one of the subspace, fictitious space, and auxiliary space preconditioners.}
   \label{tab:stokes}

   \aboverulesep = 0pt
   \belowrulesep = 0pt

   \resizebox{\linewidth}{!}{%
   \begin{tabular}{|cZ?cc|cc|cc?cc|cc|cc|}
      \toprule
      \tabstrut & \multicolumn{1}{c}{} & \multicolumn{6}{c?}{$p=2\qquad\eta=1$}&\multicolumn{6}{c|}{$p=2\qquad\eta=100$} \\
      \midrule[1pt]
      \multicolumn{2}{|c?}{} & \multicolumn{2}{c|}{$\Bsub A$} & \multicolumn{2}{c}{$\Bfic A$} & \multicolumn{2}{c?}{$\Baux A$} & \multicolumn{2}{c|}{$\Bsub A$} & \multicolumn{2}{c}{$\Bfic A$} & \multicolumn{2}{c|}{$\Baux A$} \\
      $n$ & {\# DOFs} & It. & Time (s) & It. & Time (s) & It. & Time (s) & It. & Time (s) & It. & Time (s) & It. & Time (s) \\
      \midrule[1pt]
      4  &    208 & 114 & 0.001 & 139 & 0.003 & 126 & 0.002 & 278 & 0.003 & 324 & 0.006 & 317 & 0.004 \\
      8  &    800 & 156 & 0.006 & 210 & 0.017 & 187 & 0.010 & 687 & 0.025 & 890 & 0.060 & 839 & 0.040 \\
      16 &   3136 & 196 & 0.028 & 230 & 0.066 & 200 & 0.038 & 961 & 0.141 & 1155 & 0.314 & 1089 & 0.197 \\
      32 &  12416 & 239 & 0.147 & 233 & 0.272 & 202 & 0.153 & 1189 & 0.698 & 1286 & 1.446 & 1178 & 0.837 \\      \midrule[1pt]
      \tabstrut & \multicolumn{1}{c}{} & \multicolumn{6}{c?}{$p=3\qquad\eta=1$}&\multicolumn{6}{c|}{$p=3\qquad\eta=100$} \\
      \midrule[1pt]
      4  &   456 & 124 & 0.004 & 147 & 0.006 & 158 & 0.005 & 421 & 0.012 & 409 & 0.017 & 512 & 0.016 \\
      8  &  1776 & 140 & 0.016 & 161 & 0.029 & 181 & 0.024 & 625 & 0.088 & 616 & 0.107 & 778 & 0.101 \\
      16 &  7008 & 172 & 0.088 & 163 & 0.125 & 189 & 0.104 & 776 & 0.399 & 721 & 0.535 & 920 & 0.480 \\
      32 & 27840 & 208 & 0.438 & 163 & 0.574 & 189 & 0.423 & 956 & 2.012 & 741 & 2.305 & 948 & 2.078 \\
      \midrule[1pt]
      \tabstrut & \multicolumn{1}{c}{} & \multicolumn{6}{c?}{$p=4\qquad\eta=1$}&\multicolumn{6}{c|}{$p=4\qquad\eta=100$} \\
      \midrule[1pt]
      4  &   800 & 117 & 0.009 & 146 & 0.012 & 156 & 0.012 & 434 & 0.033 & 427 & 0.033 & 522 & 0.038 \\
      8  &  3136 & 135 & 0.046 & 153 & 0.050 & 169 & 0.056 & 586 & 0.209 & 585 & 0.192 & 721 & 0.240 \\
      16 & 12416 & 166 & 0.268 & 156 & 0.224 & 174 & 0.236 & 706 & 1.216 & 615 & 0.884 & 785 & 1.097 \\
      32 & 49408 & 201 & 1.353 & 157 & 0.905 & 176 & 1.020 & 864 & 5.814 & 632 & 3.557 & 813 & 4.580 \\
      \midrule[1pt]
      \tabstrut & \multicolumn{1}{c}{} & \multicolumn{6}{c?}{$p=5\qquad\eta=1$}&\multicolumn{6}{c|}{$p=5\qquad\eta=100$} \\
      \midrule[1pt]
      4  &  1240 & 119 & 0.021 & 157 & 0.025 & 157 & 0.022 & 442 & 0.075 & 422 & 0.067 & 544 & 0.076 \\
      8  &  4880 & 133 & 0.126 & 160 & 0.110 & 170 & 0.108 & 553 & 0.538 & 514 & 0.556 & 680 & 0.512 \\
      16 & 19360 & 163 & 0.581 & 161 & 0.509 & 182 & 0.507 & 651 & 2.238 & 545 & 1.563 & 754 & 1.949 \\
      32 & 77120 & 195 & 2.928 & 161 & 1.923 & 187 & 2.026 & 787 & 11.493 & 562 & 6.666 & 788 & 8.294 \\
      \midrule[1pt]
      \tabstrut & \multicolumn{1}{c}{} & \multicolumn{6}{c?}{$p=6\qquad\eta=1$}&\multicolumn{6}{c|}{$p=6\qquad\eta=100$} \\
      \midrule[1pt]
      4  &  1776  & 119 & 0.046 & 167 & 0.046 & 163 & 0.045 & 434 & 0.166 & 445 & 0.125 & 530 & 0.141 \\
      8  &  7008  & 135 & 0.227 & 168 & 0.211 & 172 & 0.204 & 522 & 0.879 & 507 & 0.629 & 624 & 0.744 \\
      16 & 27840  & 165 & 1.153 & 170 & 0.906 & 178 & 0.911 & 621 & 4.483 & 530 & 2.809 & 663 & 3.460 \\
      32 & 110976 & 197 & 5.763 & 171 & 3.738 & 182 & 3.751 & 740 & 21.603 & 541 & 11.856 & 676 & 13.896 \\
      \bottomrule
   \end{tabular}}

\end{table}

\section{Conclusions}
\label{sec:conclusions}

In this paper, we have developed three preconditioners for the interior penalty discontinuous Galerkin discretization in the space $\Hdiv$.
The first preconditioner is a subspace correction preconditioner with vertex patches, where the coarse space is the lowest-order $H^1$-conforming subspace.
The second and third preconditioners are based on auxiliary space techniques, using the standard piecewise polynomial discontinuous Galerkin space as an auxiliary space;
the third preconditioner is distinguished by its use of a block Jacobi-type smoother together with the auxiliary coarse space.
The fictitious space preconditioner is amenable to matrix-free implementation, whereas the other two preconditioners require the solution to small local problems, necessitating the use of assembled matrices and direct solvers for the patch problems.
On Cartesian (or affinely transformed Cartesian) meshes, all three preconditioners exhibit convergence that is independent of discretization parameters $h$, $p$, and $\eta$.
On general meshes, theoretical guarantees of $p$- and $\eta$-independence are not shown, but numerical results indicate that the subspace and auxiliary space preconditioners retain favorable convergence properties also in this case.
The fictitious space preconditioner on general meshes (with skewed elements) is the most sensitive to the penalty parameter.
The application to preconditioning the exactly divergence-free Stokes discretization in $\Hdiv$ is considered.

\section{Acknowledgements}

The author was partially supported by the ORAU Ralph E.\ Powe Junior Enhancement Award and NSF RTG DMS-2136228.

\printbibliography

\end{document}